\theoremstyle{plain}
\newtheorem{lem}{Lemma}[section]
\newtheorem{prop}[lem]{Proposition}
\newtheorem{thm}[lem]{Theorem}
\newtheorem{cor}[lem]{Corollary}
\theoremstyle{definition}
\theoremstyle{remark}
\DeclareMathOperator{\rank}{rank}
\DeclareMathOperator{\sym}{sym}
\DeclareMathOperator{\spn}{span}
\DeclareMathOperator{\e}{e}
\DeclareMathOperator{\diag}{diag}
\DeclareMathOperator{\modulo}{mod\ }
\newcommand{\bmu}{\boldsymbol \mu}
\newcommand{\bdelta}{\boldsymbol \delta}
\newcommand{\bbeta}{\boldsymbol \beta}
\newcommand{\Z}{\mathbb Z}
\newcommand{\Q}{\mathbb Q}
\newcommand{\A}{\mathbb A}
\newcommand{\F}{\mathbb F}
\newcommand{\E}{\mathbb E}
\newcommand{\R}{\mathbb R}
\newcommand{\C}{\mathbb C}
\newcommand{\stufe}{\mathcal N }
\newcommand{\G}{\mathcal G}
\newcommand{\h}{\mathfrak h}
\newcommand{\K}{\mathcal K}
\newcommand{\M}{\mathcal M}
\newcommand{\Y}{\mathcal Y}
\begin{document}

% Enter full title and short title for running headers
\title[Half-integral weight Siegel Eisenstein series]
{Hecke operators on half-integral weight Siegel Eisenstein series}
%\shorttitle{Degree 2 Eisenstein series}

% Author name(s)
\author{Lynne H. Walling}
\address{School of Mathematics, University of Bristol, University Walk, Clifton, Bristol BS8 1TW, United Kingdom;
phone +44 (0)117 331-5245, fax +44 (0)117 928-7978}
\email{l.walling@bristol.ac.uk}
% Abbreviated author name for running headers
%\abbrevauthor{L.H. Walling}
% Abbreviated author name for first page header
%\headabbrevauthor{Walling, L.H.}

%\address{School of Mathematics, University of Bristol, University Walk, Clifton, Bristol BS8 1TW, United Kingdom}

% Address / e-mail address of corresponding author
%\correspdetails{l.walling@bristol.ac.uk}

\keywords{Hecke eigenvalues, Eisenstein series, Siegel modular forms, half-integral weight}

\begin{abstract} 
We construct a basis for the space of half-integral weight Siegel Eisenstein series of level $4\stufe$ where $\stufe$ is odd and square-free.  Then we restrict our attention to those Eisenstein series generated from elements of $\Gamma_0(4)$, commenting on why this restriction is necessary for our methods.
We directly apply to these forms all Hecke operators attached to odd primes, and we realize the images explicitly as linear combinations of Siegel Eisenstein series.  Using this information, we diagonalize the subspace of Eisenstein series generated from elements of $\Gamma_0(4)$, obtaining a multiplicity-one result.
\end{abstract}

\maketitle
\def\thefootnote{}
\footnote{2010 {\it Mathematics Subject Classification}: Primary
11F46, 11F11 }
\def\thefootnote{\arabic{footnote}}

\section{Introduction} 

In a seminal paper \cite{Shim}, Shimura established a beautiful correspondence between (Siegel degree 1) cusp forms of half-integral weight $k/2$, level $4\stufe$ and character $\chi$, and elliptic modular forms of integral weight $k-1$, level $2\stufe$ and character $\chi^2$.  
Essentially the correspondence is established by comparing Hecke-eigenvalues (and using Weil's Converse Theorem to show that the integral weight forms constructed are indeed a modular forms).

In recent work \cite{int wt} we constructed a basis for the space of degree $n$, integral weight $k$, arbitrary level $\stufe$ and character $\chi$ Siegel Eisenstein series, and through direct computations we produced a basis of simultaneous eigenforms for the Hecke operators
$$\{T(p), T_j(p^2):\ 1\le j\le n,\ p \text{ prime},\ p\nmid\stufe\ \};$$
when $\stufe$ is square-free, the elements of this basis are also eigenforms for
$$\{T(q), T_j(q^2):\ 1\le j\le n,\ q \text{ prime},\ q|\stufe\ \}$$
and these basis elements are distinguished by their eigenvalues.

Here we extend this work to consider half-integral weight Siegel Eisenstein series.
There are several difficulties that arise, since we need to work with automorphy factors.
In principle we could work in a cover of the symplectic group, but following Shimura, for any matrix in the congruence subgroup $\Gamma_0(4)$ (defined below) we make a specific choice for an automorphy factor given by a quotient of Siegel theta series (also defined below).
Our computations take advantage of nice properties of these theta series and of generalized Gauss sums (Proposition 2.2).
Unfortunately, this also limits our detailed evaluation of Hecke operators to those Siegel Eisenstein series generated from elements in $\Gamma_0(4)$, and we are unable to give a satisfactory evaluation of the action of $T_j(4)$.

Like Shimura, we only consider levels $4\stufe$, and here we especially focus on the case of $\stufe$ odd and square-free.
In degree 1 this is fully justified as there is no one-fold covering group of a subgroup of $SL_2(\Z)$ that properly contains
$$\{[\gamma,\theta(\gamma\tau)/\theta(\tau)]:\ \gamma\in\Gamma_0(4)\ \}$$
(see, for example, Corollary 3.7).  
However, we are not able to prove this for degree $n>1$, although (also in Corollary 3.7) we prove a partial result toward this.
The reasoning used to prove Corollary 3.7 is also used to show that, regardless of the choice of automorphy factor for certain $\gamma\in Sp_n(\Z)$ with $\gamma\not\in\Gamma_0(4)$, the Siegel Eisenstein series generated from $\gamma$ is 0 (Proposition 3.6).
(In Proposition 3.5 we give necessary conditions on $\chi$ to have a nonzero Eisenstein series.)

For $\stufe$ odd and square-free, we show that the subspace of Eisenstein series generated from elements of $\Gamma_0(4)$ has a basis of simultaneous eigenforms for the Hecke operators
$$\{T_j(q^2):\ q \text{ prime},\ q|\stufe\ \},$$
and these basis elements are distinguished by their eigenvalues.
(As in the case degree 1,  the half-integral weight Hecke operator $T(p)$ is 0 for any prime $p$; see, for example, Proposition 2.1 \cite{half-int}).

When the degree $n$ is 1, we recover Shimura's correspondence \cite{Shim}:
With $\sigma=(\stufe_0,\stufe_1)$ varying over all multiplicative partitions of $\stufe$ (meaning that $\stufe_0\stufe_1=\stufe$),
we have a basis $\{\widetilde\E_{\sigma}\}$ for the space of weight $k/2$, level $4\stufe$ and character $\chi$ Eisenstein series generated from elements of $\Gamma_0(4)$, and a basis $\{\E'_{\sigma}\}$ for the space of weight $k-1$, level $\stufe$ and character $\chi^2$
Eisenstein series, so that for every odd prime $p$, the $T_1(p^2)$-eigenvalue of $\widetilde\E_{\sigma}$ is the $T(p)$-eigenvalue of $\E'_{\sigma}$.
For $n>1$, such a correspondence is unclear; below we exhibit the eigenvalues for half-integral weight and integral weight Eisenstein series.

We still assume that $\stufe$ is odd and square-free.
Take $k',k\in\Z_+$ with $k$ odd, $\chi'$ a character modulo $\stufe$, and $\chi$ a character modulo $4\stufe$.  
For $\sigma=(\stufe_0,\ldots,\stufe_n)$ a multiplicative partition of $\stufe$, we have corresponding Eisenstein series $\E'_{\sigma}$ and $\widetilde\E_{\sigma}$ of weights $k'$ and $k/2$, levels $\stufe$ and $4\stufe$, characters $\chi'$ and $\chi$ (respectively).
(Note that by Proposition 3.6 \cite{int wt} and Proposition 4.1, when $\E'_{\sigma}\not=0$ we have
$\left(\chi'_{\stufe/\stufe_0\stufe_n}\right)^2=1$ and when $\widetilde\E_{\sigma}\not=0$ we have 
$\left(\chi_{4\stufe/\stufe_0\stufe_n}\right)^2=1$.)
For a prime $q|\stufe$ and $0\le d\le n$ so that $q|\stufe_d$, by Corollary 4.3  \cite{int wt} we have
$$\E'_{\sigma}|T(q)=q^{k'd-d(d+1)/2}\chi'_{\stufe/q}(\overline qX_dM_{\sigma},X_d)\E'_{\sigma}$$
where, for each prime $q'|\stufe/q$ and $0\le d'\le n$ so that $q'|\stufe_{d'},$
$$\chi'_{q'}(\overline qX_dM_{\sigma},X_d)
=\begin{cases}\chi'_{q'}(q^{d-d'})&\text{if $d'\le d$,}\\
\chi'_{q'}(q^{d'-d})&\text{if $d'>d$.}\end{cases}$$
Also, by Corollary 4.5 \cite{int wt}, we have
\begin{align*}
\E'_{\sigma}|T_j(q^2)&=
q^{jd}\sum_{s=0}^j q^{s(2k'-2d+s-j-1)}\chi'_{\stufe'_0}(q^{2s})\chi'_{\stufe'_n}(q^{2(j-s)})\\
&\quad\cdot \bbeta_q(d,s)\bbeta_q(n-d,j-s)\E'_{\sigma}
\end{align*}
where $\bbeta_q(m,r)$ is the number of $r$-dimensional subspaces of an $m$-dimensional space over $\Z/q\Z$ and
$\stufe'_i=\stufe_i/(q,\stufe_i)$.
In contrast, by Corollary 4.4 we have
\begin{align*}
\widetilde\E_{\sigma}|T_j(q^2)
&=
q^{jd}\sum_{s=0}^j q^{s(k-2d+s-j-1)}
\chi_{\stufe_0'}(q^{2s})\chi_{\stufe_n'}(q^{2(j-s)})\\
&\quad\cdot \bbeta_q(d,s)\bbeta_q(n-d,j-s) \widetilde\E_{\sigma}.
\end{align*}
In particular,
$$\widetilde\E_{\sigma}|T_n(q^2)=q^{d(k-d-1)}\chi_{\stufe'_0}(q^{2d})\chi_{\stufe'_n}(q^{2(n-d)}).$$
For an odd prime $p\nmid \stufe$, by Corollaries 5.3 and 5.5 \cite{int wt} we have
$$\E'_{\sigma}|T(p)=\left(\prod_{0<d\le n}\chi'_{\stufe_d}(p^d)\right)
\prod_{i=1}^n\left(\chi'(p)\overline\chi'_{\stufe_n}(p^2)p^{k'-i}+1\right)\E'_{\sigma},$$
\begin{align*}
\E'_{\sigma}|T_j(p^2)
&=\bbeta_p(n,j)\sum_{r+s\le j}p^{k'(j-r+s)-(j-r)(n+1)}\chi'(p^{j-r+s})\chi'_{\stufe_n}(p^{2(r-s)})\\
&\quad\cdot \bbeta_p(j,r)\bbeta_p(j-r,s)\sym_p(j-r-s)\E'_{\sigma}
\end{align*}
where $\sym_p(\ell)$ is the number of symmetric, $\ell\times\ell$, invertible matrices over $\F=\Z/p\Z$.
In contrast, by Theorem 4.5 we have
\begin{align*}
\widetilde\E_{\sigma}|T_j(p^2)
&=\bbeta_p(n,j)\sum_{r+s\le j}p^{k(j-r+s)/2-(j-r)(n+1)}\chi(p^{j-r+s})\chi_{\stufe_n}(p^{2(r-s)})\\
&\quad\cdot \bbeta_p(j,r)\bbeta_p(j-r,s) \left(\frac{\G_1(p)}{\sqrt{p}}\right)^{j-r-s}\sym_p^{\psi}(j-r-s)\widetilde\E_{\sigma}
\end{align*}
where $\G_1(p)$ is the classical Gauss sum modulo $p$, $\psi(*)=\left(\frac{*}{p}\right)$, and
$\sym_p^{\psi}(\ell)=\sum_{U\in\F_{\sym}^{\ell,\ell}}\psi(\det U).$

Since these eigenvalue of $\widetilde\E_{\sigma}$ under $T_j(p^2)$ are not so attractive, in Corollary 4.6 we introduce an alternate set of generators for the local Hecke algebra, obtaining more attractive eigenvalues (similar to what we did in Corollary 5.5 \cite{int wt}).

As much as possible, we borrow results from \cite{int wt}.  Here the construction of the Siegel Eisenstein series is a bit different because of the automorphy factors involved.  As the automorphy factors contain Gauss sums, for the evaluation of the action of the Hecke operators we establish some nice identities between generalized Gauss sums (Propositions 5.1, 5.2, 5.3).  Although these identities can surely be established by other methods, we rely on changes of variables to provide elementary arguments.

The author thanks Andrew Booker and Fredrik Stromberg for helpful conversations.

\section{Preliminaries}

For $n\in\Z$, $n>1$, Siegel's degree $n$ upper half-space is defined as
$$\h_{(n)}=\{X+iY:\ X,Y\in\R^{n,n}_{\sym},\ Y>0\ \}$$
where $Y>0$ means that, as a quadratic form, $Y$ is positive definite.
The symplectic group $Sp_n(\Z)$ acts on $\h_{(n)}$, where
\begin{align*}
&Sp_n(\Z)\\
&=\left\{\begin{pmatrix}A&B\\C&D\end{pmatrix}\in SL_{2n}(\Z):\ A\,^tB=B\,^tA,\ C\,^tD=D\,^tC,\ A\,^tD-B\,^tD=I\ \right\}
\end{align*}
(here $^tB$ is the transpose of $B$).
For $\gamma=\begin{pmatrix}A&B\\C&D\end{pmatrix}\in Sp_n(\Z)$ and $\tau\in\h_{(n)}$, the action of $\gamma$ on $\tau$ is given by
$$\gamma\tau=(A\tau+B)(C\tau+D)^{-1}.$$
Note that for $\begin{pmatrix}A&B\\C&D\end{pmatrix}\in Sp_n(\Z)$, $(C\ D)$ is a coprime symmetric pair, meaning that $C\,^tD$ is symmetric and for all primes $p$, $\rank_p(C\ D)=n$ (here $\rank_p(C\ D)$ denotes the rank of the matrix $(C\ D)$ modulo $p$, meaning we view $(C\ D)$ as a matrix over $\Z/p\Z$).  Conversely, given $C,D\in\Z^{n,n}$ so that $(C\ D)$ is a coprime symmetric pair, there is a matrix $\begin{pmatrix}A&B\\C&D\end{pmatrix}$ in $Sp_n(\Z)$.  When $(C\ D)$ is a pair of integral $n\times n$ matrices, we write $(C,D)=1$ to mean that $C$ and $D$ are coprime.

To construct Siegel Eisenstein series of half-integral weight $k/2$, we need to make sense of $(\det(C\tau+D))^{-k/2}.$  Thus we have the following definitions.

\smallskip
\noindent{\bf Definition.}
An automorphy factor for $\gamma=\begin{pmatrix}A&B\\C&D\end{pmatrix}\in Sp_n(\Z)$ is an analytic function $\varphi_{_\gamma}(\tau)$ on $\h_{(n)}$ so that $|\varphi_{_\gamma}(\tau)|^2=|\det(C\tau+D)|$.
(Note that it is known that $\det(C\tau+D)\not=0$; see Proposition 1.2.1 \cite{And}.)
When we also have $\gamma'\in Sp_n(\Z)$ and $\varphi_{_{\gamma'}}(\tau)$ an automorphy factor for $\gamma'$, we have
$$[\gamma,\varphi_{_\gamma}(\tau)][\gamma,\varphi_{_{\gamma'}}(\tau)]=[\gamma\gamma',\varphi_{_\gamma}(\gamma'\tau)\varphi_{_{\gamma'}}(\tau)].$$
When $\det D\not=0$, we define $S_{C,D}(\tau)$ by taking
$$\lim_{\lambda\to0^+}S_{C,D}(i\lambda I)=\sqrt{\det D}\in\R_+\cup i\R_+$$
and extending analytically to $\tau\in \h_{(n)}$.  Thus with 
$\gamma=\begin{pmatrix}*&*\\C&D\end{pmatrix}\in Sp_n(\Z)$, $\det D\not=0$, and
$\varphi_{_\gamma}(\tau)$ an automorphy factor for $\gamma$, we have that $$\frac{\varphi_{_\gamma}(\tau)}{S_{C,D}(\tau)}$$
is analytic with absolute value 1, so $\varphi_{_\gamma}(\tau)=v(\gamma)S_{C,D}(\tau)$ for some $v(\gamma)$ with $|v(\gamma)|=1$.
We  define the basic degree $n$ Siegel theta series by
$$\theta(\tau)=\sum_{U\in\Z^{1,n}}\e\{2\,^tUU\tau\} \text{ where } \e\{*\}=\exp(\pi iTr(*)),$$
and for $(C\ D)$ a coprime symmetric pair  with $\det D\not=0$, we define a generalized Gauss sum by
$$\G_C(D)=\sum_{U\in\Z^{1,n}/\Z^{1,n}D}\e\{2\,^tUUD^{-1}C\}.$$

The following result is a special case of Theorem 1.3.13 and Proposition 1.4.5 \cite{And}.

\begin{prop} (Transformation Formula)
Set
$$\Gamma_0(4)=\left\{\begin{pmatrix}A&B\\C&D\end{pmatrix}\in Sp_n(\Z):\ 4|C\ \right\}.$$
For
$\gamma=\begin{pmatrix}A&B\\C&D\end{pmatrix}\in\Gamma_0(4)$, we have
$$\frac{\theta(\gamma\tau)}{\theta(\tau)}=
\frac{\overline\G_C(D)}{\sqrt{\det D}}\,S_{C,D}(\tau).$$
\end{prop}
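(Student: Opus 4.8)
The plan is to obtain the formula as the specialization, to the scalar degree $n$ weight $1/2$ theta series $\theta(\tau)=\sum_{U\in\Z^{1,n}}\e\{2\,{}^tUU\tau\}$ with trivial characteristics, of the general Siegel theta transformation law (Andrianov's Theorem 1.3.13 and Proposition 1.4.5), whose underlying mechanism is Poisson summation; most of the labour then lies in matching the multiplier to the specific normalized objects $\G_C(D)$, $\sqrt{\det D}$ and $S_{C,D}(\tau)$ defined above. First I would record the algebraic identity that drives the computation. Writing $\gamma=\begin{pmatrix}A&B\\C&D\end{pmatrix}$ and using the symplectic relations ${}^tAC={}^tCA$, ${}^tAD-{}^tCB=I$, $C\,{}^tD=D\,{}^tC$, one checks (when $\det C\neq0$) that $A\tau+B=AC^{-1}(C\tau+D)-({}^tC)^{-1}$, hence $\gamma\tau=AC^{-1}-\bigl((C\tau+D)\,{}^tC\bigr)^{-1}$. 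This exhibits $\gamma\tau$ as a rational constant part plus an honest ``inverse'' Gaussian part, which is exactly the shape that Poisson summation converts into a dual theta.

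Next I would run the Poisson summation. Since the quadratic part $-U\bigl((C\tau+D)\,{}^tC\bigr)^{-1}\,{}^tU$ is a decaying Gaussian on $\h_{(n)}$ (as $Y>0$), Fourier inversion of this Gaussian produces the power $\det(C\tau+D)^{1/2}$ together with the clean $\tau$-theta, while the rational phases coming from $AC^{-1}$, summed over a full set of residues, assemble into a generalized Gauss sum; collecting the dual-lattice normalizations yields a factor $1/\sqrt{\det D}$. The hypothesis $4\mid C$ is precisely what forces the relevant exponents to be integral so that these phases combine into exactly $\overline{\G_C(D)}$ with no residual eighth root of unity, and so that the reorganization of the $(A,C)$-Gauss sum into the $(C,D)$-form $\G_C(D)$ (via the symplectic relations and quadratic reciprocity) carries no stray sign.

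Finally I would pin down the two delicate normalizations. For the automorphy factor I would use that $S_{C,D}(\tau)$ is by definition the analytic continuation of a branch of $\det(C\tau+D)^{1/2}$ normalized by $\lim_{\lambda\to0^+}S_{C,D}(i\lambda I)=\sqrt{\det D}$; evaluating the transformation at $\tau=i\lambda I$ and letting $\lambda\to0^+$ both fixes the branch and forces the $\sqrt{\det D}$ in the denominator and the complex conjugation on $\G_C(D)$ to come out as stated. For $\gamma$ with $\det C=0$ the clean identity above is unavailable, and there I would appeal to the general transformation law for all coprime symmetric pairs (Andrianov's Theorem 1.3.13), of which the $\det C\neq0$ computation is the prototypical case, or pass to it through the cocycle relation for automorphy factors recorded above. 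The main obstacle is exactly this bookkeeping of signs and square-root branches: verifying that the Gaussian Fourier transform, the reciprocity hidden in $\G_C(D)$, and the normalization of $S_{C,D}$ all conspire to give precisely $\overline{\G_C(D)}/\sqrt{\det D}$ with no extra root of unity. This is the step the condition $4\mid C$ exists to control, and the one I would check most carefully against Andrianov's conventions.
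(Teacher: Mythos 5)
Your proposal follows essentially the same route as the paper: the paper gives no independent proof of this proposition, stating it only as a special case of Theorem 1.3.13 and Proposition 1.4.5 of Andrianov, and your plan is precisely to specialize those results (whose engine is the Poisson-summation computation you outline, using the correct identity $\gamma\tau=AC^{-1}-\bigl((C\tau+D)\,{}^tC\bigr)^{-1}$ when $\det C\neq0$) and then match the multiplier to the normalizations of $\G_C(D)$, $\sqrt{\det D}$ and $S_{C,D}(\tau)$. The normalization bookkeeping you flag is indeed where all the content lies, but your identification of the source and of the mechanism is exactly what the paper relies on.
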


Because of this result, we make the following definition.

\smallskip\noindent{\bf Definition.}
For $\gamma\in \Gamma_0(4)$, we set
$$\widetilde\gamma=[\gamma,\theta(\gamma\tau)/\theta(\tau)].$$
Note that with $\gamma,\delta\in\Gamma_0(4)$, we have $\widetilde\gamma\widetilde\delta=\widetilde{(\gamma\delta)}.$
\smallskip

The following identities will be useful.

\begin{prop}  Take
$\delta=\begin{pmatrix}A&B\\C&D\end{pmatrix}\in\Gamma_0(4)$.
\begin{enumerate}
\item[(a)]  For $\alpha\in\Gamma_{\infty}$, we have $\theta(\alpha\tau)=\theta(\tau)$, and for
$Y\in\Z^{n,n}_{\sym}$, 
we have 
$$\frac{\overline\G_C(D+CY)}{\sqrt{\det D+CY}}\,S_{C,D+CY}(\tau)
=\frac{\overline\G_C(D)}{\sqrt{\det D}}\,S_{C,D}(\tau+Y).$$
\item[(b)]  For $E\in SL_n(\Z)$, we have $S_{C,D}(\tau)=S_{EC,ED}(\tau).$
\item[(c)]  For $E\in GL_n(\Z)$, we have
$$\G_{EC}(ED)=\G_C(D)=\G_{CE}(D\,^tE^{-1}).$$
\end{enumerate}
\end{prop}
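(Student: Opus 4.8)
The plan is to handle the three parts independently, deriving (a) from the Transformation Formula (Proposition 2.1) and the cocycle relation recorded above, (b) from a branch-of-square-root argument, and (c) from explicit changes of variables in the defining sums. For the first assertion of (a), writing a general element of $\Gamma_\infty$ as $\begin{pmatrix}A&B\\0&\,^tA^{-1}\end{pmatrix}$ with $A\in GL_n(\Z)$ and $A\,^tB$ symmetric, one has $\alpha\tau = A\tau\,^tA + B\,^tA$, so $\theta(\alpha\tau)=\sum_{U\in\Z^{1,n}}\e\{2\,^tUU A\tau\,^tA\}\,\e\{2\,^tUU B\,^tA\}$. The second exponential is trivial since $\Tr(^tUU B\,^tA)=U B\,^tA\,^tU\in\Z$ (because $B\,^tA$ is integral), and in the first factor $\Tr(^tUU A\tau\,^tA)=\Tr(^t(UA)(UA)\tau)$, so the substitution $U\mapsto UA$, a bijection of $\Z^{1,n}$ as $A\in GL_n(\Z)$, returns $\theta(\tau)$. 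For the displayed identity I would set $t_Y=\begin{pmatrix}I&Y\\0&I\end{pmatrix}\in\Gamma_\infty$, so that $\delta t_Y=\begin{pmatrix}A&AY+B\\C&D+CY\end{pmatrix}\in\Gamma_0(4)$ and $t_Y\tau=\tau+Y$. Applying Proposition 2.1 to $\delta t_Y$ (whose lower row is $(C\ D+CY)$) evaluates $\theta((\delta t_Y)\tau)/\theta(\tau)$ as the left-hand side, while the composition law with $\widetilde{t_Y}=[t_Y,1]$ (since $\theta(\tau+Y)=\theta(\tau)$) rewrites the same automorphy factor as $\frac{\overline\G_C(D)}{\sqrt{\det D}}S_{C,D}(\tau+Y)$; equating gives the claim.

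For (b), since $\det E=1$ for $E\in SL_n(\Z)$ I would observe that $\det(EC\tau+ED)=\det E\cdot\det(C\tau+D)=\det(C\tau+D)$, so $S_{EC,ED}(\tau)$ and $S_{C,D}(\tau)$ are two analytic square roots of the same nowhere-vanishing function on the connected domain $\h_{(n)}$ and hence differ by a constant sign. Evaluating the defining boundary limit along $\tau=i\lambda I$, $\lambda\to0^+$, both tend to $\sqrt{\det(ED)}=\sqrt{\det D}$ (again using $\det E=1$), so the sign is $+1$ and the two functions coincide.

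For (c), both equalities reduce to changes of variables that descend to the relevant finite quotients. For $\G_{EC}(ED)$ with $E\in GL_n(\Z)$ I would use $(ED)^{-1}(EC)=D^{-1}C$, so the summand is literally unchanged, together with $\Z^{1,n}ED=\Z^{1,n}D$ (because $\Z^{1,n}E=\Z^{1,n}$), so the index set is identical and $\G_{EC}(ED)=\G_C(D)$. For $\G_{CE}(D\,^tE^{-1})$ I would substitute $V=U\,^tE$: here $(D\,^tE^{-1})^{-1}(CE)=\,^tE D^{-1}CE$ and $\Tr(^tUU\,^tE D^{-1}CE)=\Tr(^tVV D^{-1}C)$ since $\,^tV=E\,^tU$, so the summand becomes $\e\{2\,^tVV D^{-1}C\}$, while right multiplication by $\,^tE$ carries $\Z^{1,n}/\Z^{1,n}D\,^tE^{-1}$ bijectively onto $\Z^{1,n}/\Z^{1,n}D$, yielding $\G_C(D)$.

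I expect the main obstacle to be the sign bookkeeping in (b): one must argue that the two branches agree globally rather than merely near the cusp, which is exactly where connectedness of $\h_{(n)}$ and the normalization of $S_{C,D}$ via the limit along $i\lambda I$ are essential. The computations in (a) and (c) are elementary once the correct auxiliary matrices ($t_Y$ and $\,^tE$) and the integrality facts are in place, the only genuine care being to check that the substitutions respect the quotient lattices.
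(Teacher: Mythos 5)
Your proposal is correct and follows essentially the same route as the paper's own proof: part (a) via the substitution $U\mapsto UG$ and a double application of the Transformation Formula to $\delta t_Y$, part (b) by comparing two analytic square roots of $\det(C\tau+D)$ normalized at the boundary point $i\lambda I$, and part (c) by the same two changes of variables on the quotient lattices. No gaps.
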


\begin{proof}
(a)  
For $\alpha\in\Gamma_{\infty}$, we have $\alpha=\begin{pmatrix}G&GY\\0&^tG^{-1}\end{pmatrix}$ for some $G\in GL_n(\Z)$ and $Y\in\Z^{n,n}_{\sym}$.  Thus
for $U\in\Z^{1,n}$, $^tUUGY\,^tG$ is integral, and $UG$ varies over $\Z^{1,n}$ as $U$ does. Hence
\begin{align*}
\theta(\alpha\tau)&=\sum_{U\in\Z^{1,n}}\e\{2\,^tUUG(\tau+Y)\,^tG\}\\
&=\sum_{U\in\Z^{1,n}}\e\{2\,^t(UG)(UG)\tau\}\\
&=\theta(\tau).
\end{align*}
So with $G=I$ and noting that $\delta\alpha\in\Gamma_0(4)$, by Proposition 2.1 we have
\begin{align*}
\frac{\overline\G_{C}(D+CY)}{\sqrt{\det(D+CY)}}S_{C,D+CY}(\tau)
&=\frac{\theta(\delta\alpha\tau)}{\theta(\tau)}\\
&=\frac{\theta(\delta\alpha\tau)}{\theta(\alpha\tau)}\\
&=\frac{\theta(\delta(\tau+Y))}{\theta(\tau+Y)}\\
&=\frac{\overline\G_C(D)}{\sqrt{\det D}}S_{C,D}(\tau+Y).
\end{align*}

(b)  We know that
$$\frac{S_{C,D}(\tau)}{S_{EC,ED}(\tau)}$$
is an analytic function whose square is 1, and whose limit as $\tau\to 0$ is also 1.
Hence $S_{C,D}(\tau)=S_{EC,ED}(\tau).$

(c)  Since $(ED)^{-1}\,(EC)=D^{-1}C$
and $\Z^{1,n}E=\Z^{1,n}$, we have $\G_{EC}(ED)=\G_C(D)$.  
Also, we have 
\begin{align*}
\overline\G_{CE}(D\,^tE^{-1})
&=\sum_{U\in\Z^{1,n}/\Z^{1,n}D\,^tE^{-1}}\e\{2\,^tUU\,^tED^{-1}CE\}\\
&=\sum_{U\in\Z^{1,n}/\Z^{1,n}D\,^tE^{-1}}\e\{2(E\,^tU)(U\,^tE)D^{-1}C\}
\end{align*}
(recall that $Tr(AB)=Tr(BA)$.)
Take $U'=U\,^tE$.  Thus $U'$ varies over $\Z^{1,n}\,^tE/\Z^{1,n}D
=\Z^{1,n}/\Z^{1,n}D$ as $U$ varies over $\Z^{1,n}/\Z^{1,n}D\,^tE^{-1}$.  So
$\G_{CE}(D\,^tE^{-1})=\G_C(D).$
\end{proof}

We will make use of the following terminology and notation from the theory of quadratic forms.
With $\F$ a field, $V$ an $m$-dimensional $\F$-vector space equipped with a quadratic form $Q$, and $A\in\F^{m,m}_{\sym}$, we write $V\simeq A$ when $A$ represents the quadratic form $Q$ on $V$ relative to some basis for $V$.  With $V\simeq A$, we say $V$ is regular if $\det A\not=0$.  
For a vector $v\in V$, we say $v$ is isotropic if $Q(v)=0$, and anisotropic otherwise.
For $A, A'$ square, symmetric matrices, we sometimes write $A\perp A'$ for the matrix $\diag\{A,A'\}$.  With $a_1,\ldots,a_r\in\F$, we write
$\big<a_1,\ldots,a_r\big>$ for $\diag\{a_1,\ldots,a_r\}$.  We write
$\big<a\big>^{\ell}$ to denote the $\ell\times\ell$ matrix $\diag\{a,\ldots,a\}$.

For a prime $q$, our formulas for the action of the Hecke operators on Eisenstein series will involve the functions we now define.  
% Fix a prime $q$ and let $\varepsilon=\varepsilon_q=\left(\frac{-1}{q}\right)$.  
For $b,c\in\Z$ with $0\le c\le b$, set
$$\bmu_q(b,c)=\prod_{i=0}^{c-1}(q^{b-i}-1),\ 
\bdelta_q(b,c)=\prod_{i=0}^{c-1}(q^{b-i}+1),$$
and set
$$\bbeta_q(b,c)=\frac{\bmu_q(b,c)}{\bmu_q(c,c)}.$$
(So $\bbeta_q(b,c)$ is the number of $c$-dimensional subspaces of a $b$-dimensional space over $\Z/q\Z$.)
We agree that $\bbeta_q(0,0)=1$ and with $0\le b<c$, $\bbeta_q(b,c)=0$.
Now fix a character $\chi$ whose conductor is exactly divisible by $q$.  Then define
$$\sym_q^{\chi}(b,c)=\sum_U \chi_q\left(\det\begin{pmatrix}\mu&\nu\\ ^t\nu&0\end{pmatrix}\right)$$
where $\F=\Z/q\Z$ and
 $\begin{pmatrix}\mu&\nu\\ ^t\nu&0\end{pmatrix}\in\F^{b+c,b+c}_{\sym}$ with $\mu$ of size $b\times b$.  We agree that $\sym_q^{\chi}(0,0)=1$, and we set $\sym_q^{\chi}(b)=\sym_q^{\chi}(b,0)$.

\begin{lem}  Let $q$ be an odd prime,  and
suppose that $\chi$ is a character whose conductor is exactly divisible by $q$.
Then
$$\sym_q^{\chi}(b,c)=\begin{cases}
\frac{q^{m^2+m-c}\bmu(b,b)}{\bmu\bdelta(m-c,m-c)}&\text{if $b+c=2m$ and $\chi_q=1$,}\\
\frac{\varepsilon^mq^{m^2}\bmu(b,b)}{\bmu\bdelta(m-c,m-c)}&\text{if $b+c=2m$, $\chi_q^2=1$, and $\chi_q\not=1$,}\\
\frac{q^{m^2+m}\bmu(b,b)}{\bmu\bdelta(m-c,m-c)}&\text{if $b+c=2m+1$ and $\chi_q=1$,}\\
0&\text{otherwise.}
\end{cases}$$
In particular, $\sym_q^{\chi}(b,c)=0$ unless $\chi_q^2=1$.
\end{lem}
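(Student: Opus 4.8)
The plan is to peel the sum down to the case $c=0$ and then evaluate a Gauss sum over symmetric matrices. Write $U=\begin{pmatrix}\mu&\nu\\ {}^t\nu&0\end{pmatrix}$ with $\mu\in\F^{b,b}_{\sym}$ and $\nu\in\F^{b,c}$; since $\chi_q(0)=0$, only invertible $U$ contribute. First I would record the vanishing forced by symmetry: replacing $(\mu,\nu)$ by $({}^tP\mu P,\,{}^tP\nu)$ for $P\in GL_b(\F)$ is a bijection of the index set that multiplies $\det U$ by $(\det P)^2$, so $\sym_q^\chi(b,c)=\chi_q(\det P)^2\,\sym_q^\chi(b,c)$ for every $P$; as $\det P$ runs over $\F^\times$ this gives $\sym_q^\chi(b,c)=0$ unless $\chi_q^2=1$. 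This settles the ``in particular'' clause and the $\chi_q^2\ne1$ half of the last case, so I assume $\chi_q^2=1$ henceforth.

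Next comes the determinant reduction. If $\nu$ has rank $<c$ then $(0,y)$ with $\nu y=0$ lies in $\ker U$, so $U$ is singular for every $\mu$; thus only $\nu$ of full rank $c$ contribute (and $\sym_q^\chi(b,c)=0$ if $c>b$). For fixed rank-$c$ $\nu$, the substitutions $\nu\mapsto{}^tP\nu R$ (together with $\mu\mapsto{}^tP\mu P$, where $P\in GL_b(\F)$, $R\in GL_c(\F)$) change $\det U$ only by a square, so---using $\chi_q^2=1$---the inner sum $\sum_\mu\chi_q(\det U)$ is the same for all of them. Hence $\sym_q^\chi(b,c)=N_\nu\,\Sigma$, where $N_\nu=\#\{\nu:\rank\nu=c\}=q^{c(c-1)/2}\bmu_q(b,c)$ and $\Sigma$ is the inner sum for $\nu_0=\begin{pmatrix}I_c\\0\end{pmatrix}$. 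Writing $\mu=\begin{pmatrix}\mu_{11}&\mu_{12}\\ {}^t\mu_{12}&\mu_{22}\end{pmatrix}$ in blocks of sizes $c$ and $b-c$ and using the two $I_c$ blocks of $U$ to perform block row and column operations collapses $U$ to block anti-diagonal form, giving $\det U=(-1)^c\det\mu_{22}$, independent of $\mu_{11},\mu_{12}$. Summing freely over those blocks and collecting powers of $q$ yields
\[
\sym_q^\chi(b,c)=q^{cb}\,\frac{\bmu_q(b,b)}{\bmu_q(b-c,b-c)}\,\chi_q((-1)^c)\,\sym_q^\chi(b-c,0).
\]

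It remains to evaluate $T(r)=\sym_q^\chi(r,0)=\sum_{A\in\F^{r,r}_{\sym}}\chi_q(\det A)$ for $\chi_q^2=1$. If $\chi_q$ is quadratic and $r$ is odd, then $A\mapsto tA$ gives $T(r)=\chi_q(t)^r\,T(r)=\chi_q(t)\,T(r)$, whence $T(r)=0$ for $t$ a nonsquare; since $b-c\equiv b+c\pmod 2$, this is exactly the remaining ``otherwise'' case. For the two surviving cases I would invoke the classification of nondegenerate quadratic forms over $\F$ into two discriminant classes, with orthogonal groups $O^+_r,O^-_r$ of classical order, and write $T(r)$ as $|GL_r(\F)|$ times the appropriate combination of $1/|O^\pm_r|$ (unsigned for $\chi_q=1$, signed by $\chi_q(\disc^\pm)$ for $\chi_q$ quadratic). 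Simplifying gives, for $r=2\ell$, the value $q^{\ell^2+\ell}\prod_{i=1}^{\ell}(q^{2i-1}-1)$ when $\chi_q=1$ and $\chi_q(-1)^{\ell}q^{\ell^2}\prod_{i=1}^{\ell}(q^{2i-1}-1)$ when $\chi_q$ is quadratic, and for $r=2\ell+1$ with $\chi_q=1$ the value $q^{\ell^2+\ell}\prod_{i=1}^{\ell+1}(q^{2i-1}-1)$. Setting $r=b-c$, so $\ell=m-c$, rewriting these products as $\bmu_q(b-c,b-c)/(\bmu_q(m-c,m-c)\,\bdelta_q(m-c,m-c))$, and combining exponents in the displayed identity reproduces all four cases, with $\varepsilon^m=\chi_q(-1)^m$ appearing in the even quadratic case.

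The main obstacle is this final evaluation with the correct sign. The reductions above are clean changes of variable, but obtaining $\varepsilon^m=\chi_q(-1)^m$---rather than its square, or a stray Gauss-sum factor---hinges on correctly identifying which isometry class carries square discriminant and on careful bookkeeping of the orthogonal-group orders; a slip here would corrupt the sign $\varepsilon$ in every downstream eigenvalue formula.
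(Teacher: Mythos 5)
Your proof is correct, and it reaches the stated formulas by a partly different route from the paper's. Both arguments open with the same observation (a congruence $U\mapsto\,^tPUP$ scales $\det U$ by a square, forcing $\sym_q^{\chi}(b,c)=0$ unless $\chi_q^2=1$) and both close with the same mass-formula evaluation, summing $\chi_q(\det)$ over the two isometry classes of regular quadratic forms over $\Z/q\Z$ weighted by $|GL_r(\F)|/o(V)$. The difference is in the middle step. The paper keeps $(b,c)$ general: for each isometry class of $V$ it counts the bases whose last $c$ vectors span a totally isotropic subspace and divides by $o(V)$. You instead eliminate the block $\nu$ first: only full-rank $\nu$ contribute, the $GL_b(\F)\times GL_c(\F)$-action is harmless once $\chi_q^2=1$, and the block reduction $\det U=(-1)^c\det\mu_{22}$ collapses everything to $\sym_q^{\chi}(b-c,0)$. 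Your reduction is more elementary --- it never needs to count totally isotropic subspaces or adapted bases --- at the price of tracking the factor $\chi_q((-1)^c)$ and the powers of $q$ through the reduction; these do come out right (for $b+c=2m$ one has $cb+(m-c)^2+(m-c)=m^2+m-c$ and $\bmu_q(b,c)=\bmu_q(b,b)/\bmu_q(b-c,b-c)$, so the displayed exponents and $\bmu\bdelta$ quotients are recovered exactly), and your closed forms for $\sym_q^{\chi}(r,0)$, including the sign $\varepsilon^{m}=\chi_q(-1)^{m}$ coming from the hyperbolic class in dimension $2\ell$ having determinant $(-1)^{\ell}$ modulo squares, agree with $|GL_r(\F)|\left(1/|O^+|\pm 1/|O^-|\right)$. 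The final orthogonal-group computation is left at the same level of detail as in the paper's own proof, which likewise delegates it to the standard classification, so I see no gap.
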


\begin{proof}  
To help us compute $\sym_q^{\chi}(b,c)$, for $\alpha\in\F^{\times}$ we let 
$\sym_q(b,c;\alpha)$ denote the number of $U=\begin{pmatrix} \mu&\nu\\^t\nu&0\end{pmatrix}\in\F_{\sym}^{b+c,b+c}$
with $\mu$ $b\times b$ and $\det U=\alpha$.  

Set $r=b+c$.  With $V$ an $r$-dimensional vector space over $\F$,
an invertible matrix $A\in\F^{r,r}_{\sym}$ defines a regular quadratic form $Q$ on $V$.
Since $q$ is odd, by Theorem 2.11 \cite{Ger} $V$ has a diagonal basis (relative to the quadratic form $Q$).  Then by Proposition 2.51 and Theorem 2.52 of \cite{Ger},
we have that $V\simeq I$ or $V\simeq I\perp\big<\omega\big>$
where $\omega$ is a fixed, non-square element of $\F^{\times}$.
If we change the basis for $V$ by a matrix $G\in GL_m(\F)$, we get $V\simeq \,^tGIG$ or
$V\simeq\,^tG(I\perp\big<\omega\big>)G.$  Thus when $V\simeq I$, any matrix for the quadratic form on
$V$ has determinant $\alpha^2$ for some $\alpha\in\F^{\times}$, and when $V\simeq I\perp\big<\omega\big>,$
any matrix for the quadratic form on $V$ has determinant $\alpha^2\omega$ for some $\alpha\in\F^{\times}$.
Also, note that for $\alpha\in\F^{\times}$,
$$\begin{pmatrix} \mu&\nu\\^t\nu&0\endpmatrix\mapsto\pmatrix I\\&\alpha\end{pmatrix}
\begin{pmatrix} \mu&\nu\\^t\nu&0\endpmatrix\pmatrix I\\&\alpha\end{pmatrix}$$
gives us a bijection between the matrices counted by $\sym(b,c;1)$ and those counted by $\sym(b,c;\alpha^2)$,
and between the matrices counted by $\sym(b,c;\omega)$ and those counted by $\sym(b,c;\omega\alpha^2).$
Thus
%$$\sym_q(b,c)=\frac{q-1}{2}(\sym_q(b,c;1)+\sym_q(b,c;\omega)$$
%and
$$\sym_q^{\chi}(b,c)
=\frac{1}{2}\left(\sym_q(b,c;1)+\chi_q(\omega)\sym_q(b,c;\omega)\right)
\sum_{\alpha\in\F^{\times}}\chi_q(\alpha^2).$$
(The factor of $1/2$ is to account for the value $\alpha^2$ appearing twice as $\alpha$ varies over $\F^{\times}$.)
Thus $\sym_q^{\chi}(b,c)=0$ if $\chi_q^2\not=1$.

On the other hand, we can compute 
$$\sum_{\alpha^2\not=0}\sym_q(b,c;\alpha^2)
\text{ and } \sum_{\alpha^2\not=0}\sym_q(b,c;\omega\alpha^2)$$ as follows. First we choose a basis $y_{b+1},\ldots,y_{b+c}$ for a dimension $c$ subspace of $V$ that is totally isotropic (meaning that the quadratic form on $\F y_{b+1}\oplus\cdots\oplus\F y_{b+c}$ is identically 0).
Then we extend this to a basis $y_1,\ldots,y_{b+c}$ for $V$.  In this way we construct all bases for $V$ relative to which $V\simeq\begin{pmatrix}\mu&\nu\\^t\nu&0\end{pmatrix}$ where $\mu$ is $b\times b$.  There are $o(V)$ bases that yield the same matrix, where $o(V)$ denotes the order of the orthogonal group of $V$.  Using Theorems 2.19, 2.59, 2.60 from \cite{Ger} we compute these quantities to obtain the formulas for $\sym_q^{\chi}(b,c)$.
\end{proof}

\bigskip

\section{Defining Eisenstein series}  

Fix $\stufe\in\Z_+$.
The (degree zero) 
cusps of the Siegel half-space $\h_{(n)}$ under the action of the congruence subgroup $\Gamma_0(4\stufe)$ correspond to the elements of the double quotient
$\Gamma_{\infty}\backslash Sp_n(\Z)\slash\Gamma_0(4\stufe)$ where
$$\Gamma_{\infty}=\left\{\begin{pmatrix}G&GY\\0&^tG^{-1}\end{pmatrix}:\ G\in GL_n(\Z),\ 
Y\in\Z^{n,n}_{\sym}
\ \right\},$$
$$\Gamma_0(4\stufe)=\left\{\begin{pmatrix}A&B\\C&D\end{pmatrix}\in Sp_n(\Z):\ 4\stufe|C\ \right\}.$$
% Note that in \cite{int wt}, we showed that for $\gamma\in Sp_n(\Z)$, there is some 
% $M_0\in\Z^{n,n}_{\sym}$ so that with
% $\gamma_0=\begin{pmatrix}I&0\\M_0&I\end{pmatrix}$ we have
% $\gamma\in\Gamma_{\infty}\gamma_0\Gamma_0(4\stufe).$ 
%so it suffices to only consider such $\gamma_0$.

Given $\gamma\in Sp_n(\Z)$, we want to construct a half-integral weight Eisenstein series generated by the $\Gamma_0(4\stufe)$-orbit of $\Gamma_{\infty}\gamma$ and transforming with some character $\chi$.
We begin by defining an Eisenstein series for the group
 $$\Gamma(4\stufe)=\{\beta\in Sp_n(\Z):\ \beta\equiv I\ (\modulo 4\stufe)\ \},$$
as follows.  With $\delta\in Sp_n(\Z)$ and $\varphi_{\delta}(\tau)$ an automorphy factor for $\delta$ (assuming $\varphi_{\delta}(\tau)=\theta(\delta\tau)/\theta(\tau)$ when $\delta\in\Gamma_0(4)$), we set
$$1(\tau)|[\delta,\varphi_{\delta}(\tau)]=(\varphi_{\delta}(\tau))^{-k}.$$
Then with 
$$\Gamma_{\infty}\Gamma(4\stufe)=\cup_{\delta^*}\Gamma_{\infty}\delta^*
\text{ (disjoint)},$$
we set
$$\E^*(\tau)=\sum_{\delta^*}1(\tau)|\widetilde\delta^*.$$
Since $1(\tau)|\widetilde\beta=1$ for any $\beta\in\Gamma_{\infty}$, the (formal) sum for $\E^*(\tau)$ is well-defined, and provided $n>(k+1)/2$, the sum on $\delta^*$ is absolutely convergent and in fact is analytic (in all the variables of $\tau$).  
Also, $\E^*\not=0$ as $\lim_{\tau\to i\infty}\E^*(\tau)=1$ since $\lim_{\tau\to i\infty}1(\tau)|\widetilde\delta^*=0$ unless $\delta^*\in\Gamma_{\infty}$.
For any $\alpha\in\Gamma(4\stufe)$, we have $\Gamma_{\infty}\Gamma(4\stufe)=\cup_{\delta^*}\Gamma_{\infty}\delta^*\alpha$ (disjoint) and $\widetilde\delta^*\widetilde\alpha=\widetilde{\delta^*\alpha}$, so
$$\E^*(\tau)|\widetilde\alpha=\sum_{\delta^*}1(\tau)|\widetilde{\delta^*\alpha}=\E^*(\tau).$$
Thus $\E^*$ is a (nonzero) Eisenstein series for $\Gamma(4\stufe)$ with weight $k/2$.

Now take $\gamma\in Sp_n(\Z)$ with automorphy factor $\varphi_{\gamma}(\tau)$, and fix a character $\chi$ modulo $4\stufe$.  Assume that $n>(k+1)/2$.  We would like to define an Eisentstein series supported on the $\Gamma_0(4\stufe)$-orbit of $\Gamma_{\infty}\gamma$ by
$$\sum_{\delta}\overline\chi(\delta)\E^*(\tau)|\widetilde\gamma\widetilde\delta$$
where $$\Gamma_{\infty}\gamma\Gamma_0(4\stufe)
=\cup_{\delta}\Gamma_{\infty}\Gamma(4\stufe)\gamma\delta \text{ (disjoint) and }
\chi(\delta)=\chi(\det D_{\delta}).$$
However, this sum is not well-defined unless, for all $\alpha\in\Gamma_0(4\stufe)$ so that
$\Gamma_{\infty}\Gamma(4\stufe)\gamma\alpha=\Gamma_{\infty}\Gamma(4\stufe)\gamma$, we have
$$\overline\chi(\alpha)\E^*(\tau)|\widetilde\gamma\widetilde\alpha=
\E^*(\tau)|\widetilde\gamma.$$
With this in mind, we have the following definitions and lemma.

\smallskip
\noindent{\bf Definition.}
Fix a level $4\stufe$, a character $\chi$ module $4\stufe$, $\gamma\in Sp_n(\Z)$,
and automorphy factor $\varphi_{\gamma}(\tau)$ for $\gamma$ (with $\varphi_{\gamma}(\tau)=\theta(\gamma\tau)/\theta(\tau)$ if $\gamma\in\Gamma_0(4)$).  Set
$$\Gamma_{\gamma}=\left\{\alpha\in\Gamma_0(4\stufe):\ 
\Gamma_{\infty}\Gamma(4\stufe)\gamma\alpha=\Gamma_{\infty}\Gamma(4\stufe)\gamma
\ \right\},$$
and set
$$\Gamma'_{\gamma,\chi}=\left\{\alpha\in\Gamma_{\gamma}:\ 
\overline\chi(\alpha)\E^*(\tau)|\widetilde\gamma\widetilde\alpha
=\E^*(\tau)|\widetilde\gamma\ \right\}.$$
(Note that $\Gamma_{\gamma}$ is a group.)
We give an alternative definition of $\Gamma'_{\gamma,\chi}$ as follows.
For $\alpha\in\Gamma_{\gamma}$, we have $\gamma\alpha\gamma^{-1}\in\Gamma_{\infty}\Gamma(4\stufe)$ and hence
$\E^*(\tau)|\widetilde{\gamma\alpha\gamma^{-1}}=\E^*(\tau).$
Thus for $\alpha\in\Gamma_{\gamma}$, we have $\alpha\in\Gamma'_{\gamma,\chi}$ if and only if
$$\overline\chi(\alpha)\E^*(\tau)|\widetilde\gamma\widetilde\alpha\widetilde\gamma^{-1}
=\E^*(\tau)|\widetilde{\gamma\alpha\gamma^{-1}}.$$
Here $\varphi_{\gamma^{-1}}(\tau)=\frac{1}{\varphi_{\gamma}(\gamma^{-1}\tau)}$
so that $\widetilde\gamma\widetilde\gamma^{-1}=\widetilde I.$
So defining $\zeta_{\gamma}(\alpha):\Gamma_{\gamma}\to\C^{\times}$ by the relation
$$\widetilde\gamma\widetilde\alpha\widetilde\gamma^{-1}(\widetilde{\gamma\alpha^{-1}\gamma^{-1}})
=[I,\zeta_{\gamma}(\alpha)]$$ we have
$$\Gamma'_{\gamma,\chi}=\left\{\alpha\in\Gamma_{\gamma}:\ 
\chi\zeta_{\gamma}^k(\alpha)=1\ \right\}.$$
\smallskip

We now establish some basic properties about $\zeta_{\gamma}$.

\begin{lem} Fix a level $4\stufe$,  
 $\gamma\in Sp_n(\Z)$, and $\varphi_{_\gamma}(\tau)$ an automorphy factor for $\gamma$, with $\varphi_{_\gamma}(\tau)=\theta(\gamma\tau)/\theta(\tau)$ if $\gamma\in\Gamma_0(4).$  
The map $\zeta_{\gamma}:\Gamma_{\gamma}\to\C^{\times}$ is a homomorphism taking values in the multiplicative group $\{\pm1, \pm i\}$.
Thus $\chi\zeta^k_{\gamma}:\Gamma_{\gamma}\to\C^{\times}$ is a homomorphism with finite image and kernel
$\Gamma'_{\gamma,\chi}$.
If $\gamma\in\Gamma_0(4)$ and $\alpha\in\Gamma_{\gamma}$, we have $\zeta_{\gamma}(\alpha)=1.$  
\end{lem}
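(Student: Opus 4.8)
The plan is to work inside the group $\mathcal M$ of pairs $[\delta,\varphi]$ (with $\delta\in Sp_n(\Z)$ and $\varphi$ an automorphy factor for $\delta$) under the multiplication recalled above, and to recognise $\zeta_\gamma$ as the discrepancy between two homomorphisms into $\mathcal M$. The key structural observation is that $\Gamma_{\infty},\Gamma(4\stufe)\subseteq\Gamma_0(4)$ (both have $4\mid C$), hence $\Gamma_{\infty}\Gamma(4\stufe)\subseteq\Gamma_0(4)$; since $\alpha\in\Gamma_{\gamma}$ forces $\gamma\alpha\gamma^{-1}\in\Gamma_{\infty}\Gamma(4\stufe)\subseteq\Gamma_0(4)$, the symbol $\widetilde{\gamma\alpha\gamma^{-1}}$ is the genuine theta-quotient lift, and $\widetilde{\ \cdot\ }$ is a homomorphism on $\Gamma_0(4)$.

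First I would show $\zeta_{\gamma}$ is a homomorphism. Set $F(\alpha)=\widetilde\gamma\,\widetilde\alpha\,\widetilde\gamma^{-1}$ and $G(\alpha)=\widetilde{\gamma\alpha\gamma^{-1}}$. Since $\widetilde{\ \cdot\ }$ is a homomorphism on $\Gamma_0(4)$ and $\alpha\mapsto\gamma\alpha\gamma^{-1}$ is a homomorphism $\Gamma_{\gamma}\to\Gamma_0(4)$, the map $G$ is a homomorphism; and $F$ is one because $\alpha\mapsto\widetilde\alpha$ is a homomorphism and conjugation by the fixed element $\widetilde\gamma$ preserves products. Both $F$ and $G$ project to $\alpha\mapsto\gamma\alpha\gamma^{-1}$, so $F(\alpha)G(\alpha)^{-1}=[I,\eta(\alpha)]$ lies in the kernel of the projection $\mathcal M\to Sp_n(\Z)$, namely the automorphy factors of $I$ (constants of modulus $1$); and $\eta$ is a homomorphism $\Gamma_{\gamma}\to\C^{\times}$ since $F,G$ are and $[I,\cdot]$ is central. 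Finally, because $G(\alpha^{-1})=G(\alpha)^{-1}$ and $G(I)=\widetilde I=[I,1]$, the defining relation yields $[I,\zeta_{\gamma}(\alpha)]=F(\alpha)G(\alpha^{-1})=[I,\eta(\alpha)]$, so $\zeta_{\gamma}=\eta$ is a homomorphism.

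Next I would pin down the values by a squaring trick. Define $w\colon\mathcal M\to\C^{\times}$ by $w([\delta,\varphi])=\varphi(\tau)^2/\det(C_{\delta}\tau+D_{\delta})$; using $S_{C,D}(\tau)^2=\det(C\tau+D)$ (which holds because the quotient is analytic, of modulus $1$, and limits to $1$) one checks that $w$ is a genuine homomorphism, the determinant factors obeying the cocycle identity for $Sp_n$. Applying $w$ to $[I,\zeta_{\gamma}(\alpha)]=F(\alpha)G(\alpha^{-1})$ and using that $w$ is conjugation-invariant (its target is abelian, so $w(F(\alpha))=w(\widetilde\alpha)$) gives $\zeta_{\gamma}(\alpha)^2=w(\widetilde\alpha)\,w(\widetilde{\gamma\alpha^{-1}\gamma^{-1}})$, where by Proposition 2.1 each $w(\widetilde\delta)=\overline\G_{C_{\delta}}(D_{\delta})^2/\det D_{\delta}$. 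The crux is to show each such factor lies in $\{\pm1\}$; equivalently that $\theta^2$ transforms on $\Gamma_0(4)$ by a quadratic character, i.e. $\G_C(D)^2=\pm\det D$ whenever $\det D$ is odd (which is forced here, since $4\mid C$ and the coprimality of $(C\ D)$ give $\rank_2(D)=n$). This reduces to the classical evaluation of quadratic Gauss sums and is exactly where the Gauss-sum identities of Proposition 2.2 enter; granting it, $\zeta_{\gamma}(\alpha)^2\in\{\pm1\}$ and hence $\zeta_{\gamma}(\alpha)\in\{\pm1,\pm i\}$.

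The remaining assertions are then formal. Since $\alpha\mapsto\chi(\det D_{\alpha})$ is a homomorphism on $\Gamma_0(4\stufe)$ (because $4\stufe\mid C_{\alpha}$ forces $D_{\alpha\beta}\equiv D_{\alpha}D_{\beta}\pmod{4\stufe}$) and $\zeta_{\gamma}^k$ takes values in the fourth roots of unity, the product $\chi\zeta_{\gamma}^k$ is a homomorphism with finite image, whose kernel is $\Gamma'_{\gamma,\chi}$ by the alternative description of $\Gamma'_{\gamma,\chi}$ already recorded. Finally, if $\gamma\in\Gamma_0(4)$ then $\widetilde\gamma$ is itself the theta-quotient lift, so $F=G$ as homomorphisms on $\Gamma_{\gamma}$; hence $[I,\zeta_{\gamma}(\alpha)]=G(\alpha)G(\alpha^{-1})=G(I)=[I,1]$, giving $\zeta_{\gamma}(\alpha)=1$. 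I expect the only genuine obstacle to be the Gauss-sum square evaluation in the third paragraph; the rest is bookkeeping in $\mathcal M$.
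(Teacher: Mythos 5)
Your proposal is correct and follows essentially the same route as the paper: the multiplicativity of $\widetilde{\ \cdot\ }$ on $\Gamma_0(4)$ gives both the homomorphism property (your $F/G$ framing is just a structural repackaging of the paper's direct computation with the central element $[I,\zeta_{\gamma}(\delta)]$) and the vanishing of $\zeta_{\gamma}$ for $\gamma\in\Gamma_0(4)$, while your squaring homomorphism $w([\delta,\varphi])=\varphi^2/\det(C_{\delta}\tau+D_{\delta})$ and the Transformation Formula yield $\zeta_{\gamma}(\alpha)^2=w(\widetilde\alpha)\,w(\widetilde{\gamma\alpha^{-1}\gamma^{-1}})$ exactly as in the paper. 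The one input you flag as ``granted'' --- that $\overline\G_C(D)^2/\det D\in\{\pm1\}$ for $\delta\in\Gamma_0(4)$ --- is likewise taken from the classical evaluation of quadratic Gauss sums in the paper's proof, so there is no gap relative to it.
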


\begin{proof}
Take $\alpha\in\Gamma_{\gamma}$; note that we have
$\alpha,\gamma\alpha\gamma^{-1}\in\Gamma_0(4)$.
When $\gamma\in \Gamma_0(4)$, we have
$$\widetilde\gamma\widetilde\alpha\widetilde\gamma^{-1}=\widetilde{(\gamma\alpha\gamma^{-1})}$$
and hence $\zeta_{\gamma}(\alpha)=1$.  

Now suppose that $\gamma\not\in\Gamma_0(4).$
For $\delta=\begin{pmatrix}A&B\\C&D\end{pmatrix}\in Sp_n(\Z)$, let $\psi_{\delta}(\tau)=\det(C\tau+D)$.  One easily checks that for $\delta'\in Sp_n(\Z)$, we have
$\psi_{\delta}(\delta'\tau)\psi_{\delta'}(\tau)=\psi_{\delta\delta'}(\tau).$
Also, for any $\delta\in Sp_n(\Z)$ with automorphy factor $\varphi_{_\delta}(\tau)$the function $(\varphi_{_\delta}(\tau))^2/\psi_{\delta}(\tau)$ is  analytic  with absolute value 1, thus for some $w(\delta)$ with $|w(\delta)|=1$, we have
 $(\varphi_{_\delta}(\tau))^2/\psi_{\delta}(\tau)=w(\delta)$.
Then we have
$$w(\gamma^{-1})\psi_{\gamma^{-1}}(\tau)
=\frac{1}{w(\gamma)\psi_{\gamma}(\gamma^{-1}\tau)}$$
and so
$$\frac{1}{w(\gamma)}=w(\gamma^{-1})\psi_{\gamma}(\gamma^{-1}\tau)\psi_{\gamma^{-1}}(\tau)=w(\gamma^{-1}).$$
So
\begin{align*}
\zeta_{\gamma}(\alpha)^2
&=w(\gamma)\,
\psi_{\gamma}(\alpha\gamma^{-1}\cdot\gamma\alpha^{-1}\gamma^{-1}\tau)
\,w(\alpha)\,
\psi_{\alpha}(\gamma^{-1}\cdot\gamma\alpha^{-1}\gamma^{-1}\tau)\\
&\quad\cdot\frac{\psi_{\gamma^{-1}}(\gamma\alpha^{-1}\gamma^{-1}\tau)}{w(\gamma)}
\,w(\gamma\alpha^{-1}\gamma^{-1})\,\psi_{\gamma\alpha^{-1}\gamma^{-1}}(\tau)\\
&=w(\alpha)\,w(\gamma\alpha^{-1}\gamma^{-1}).
\end{align*}
Since
$\alpha\in\Gamma_{\gamma}\subseteq\Gamma_0(4\stufe)$, we have $\gamma\alpha^{-1}\gamma^{-1}\in\Gamma_{\infty}\Gamma(4\stufe)\subseteq\Gamma_0(4\stufe)$, and hence by 
the Transformation Formula we know that
$w(\alpha), w(\gamma\alpha^{-1}\gamma^{-1})$ are 
squares of normalised Gauss sums.  Thus we have $\zeta_{\gamma}(\alpha)\in\{\pm1,\pm i\}.$

To see that $\zeta_{\gamma}$ is a homomorphism, take $\alpha,\delta\in\Gamma_{\gamma}$.
Then, since we have $\alpha,\delta,\gamma\alpha^{-1}\gamma^{-1},\gamma\delta^{-1}\gamma^{-1}\in\Gamma_0(4)$,
\begin{align*}
[I,\zeta_{\gamma}(\alpha\delta)]
&=\widetilde\gamma\widetilde{\alpha\delta}\widetilde\gamma^{-1}
\widetilde{(\gamma\delta^{-1}\alpha^{-1}\gamma^{-1})}\\
&=(\widetilde\gamma\widetilde\alpha\widetilde\gamma^{-1})(\widetilde\gamma\widetilde\delta\widetilde\gamma^{-1})
(\widetilde{\gamma\delta^{-1}\gamma^{-1}})
(\widetilde{\gamma\alpha^{-1}\gamma^{-1}})\\
&=
(\widetilde\gamma\widetilde\alpha\widetilde\gamma^{-1})
[I,\zeta_{\gamma}(\delta)]
(\widetilde{\gamma\alpha^{-1}\gamma^{-1}})\\
&=
(\widetilde\gamma\widetilde\alpha\widetilde\gamma^{-1})
(\widetilde{\gamma\alpha^{-1}\gamma^{-1}})[I,\zeta_{\gamma}(\delta)]\\
&= [I,\zeta_{\alpha}(\delta)][I,\zeta_{\gamma}(\delta)]\\
&=[I,\zeta_{\gamma}(\alpha)\zeta_{\gamma}(\delta)].
\end{align*}
Hence $\zeta_{\gamma}$ is a homomorphism and $\chi\zeta^k:\Gamma_{\gamma}\to\C^{\times}$ is a homomorphism with finite image.  So $\Gamma'_{\gamma,\chi}$ is a normal subgroup of finite index in $\Gamma_{\gamma}$.
\end{proof}

Now fix $\gamma\in Sp_n(\Z)$ and fix an automorphy factor $\varphi_{_\gamma}(\tau)$ for $\gamma$;
assume that $n>(k+1)/2$.
Set
$$\E_{\gamma}=\frac{1}{[\Gamma_{\gamma}:\Gamma'_{\gamma,\chi}]} 
\sum \overline\chi(\delta'\delta)\,\E^*|\widetilde\gamma\widetilde\delta'\widetilde\delta$$
where 
$$\Gamma_0(4\stufe)=\cup_{\delta}\Gamma_{\gamma}\delta \text{ (disjoint),}\ 
\Gamma_{\gamma}=\cup_{\delta'}\Gamma'_{\gamma,\chi}\delta' \text{ (disjoint).}$$
Hence
$$\Gamma_0(4\stufe)=\cup_{\delta',\delta}\Gamma'_{\gamma,\chi}\delta'\delta
\text{ (disjoint).}$$
Since  $\Gamma'_{\gamma,\chi}$ has finite index in $\Gamma_{\gamma}$,  $\Gamma(4\stufe)\subseteq\Gamma_{\gamma}$,
and $\Gamma(4\stufe)$ has finite index in $\Gamma_0(4\stufe)$, we have
$\E_{\gamma}$ defined as a finite sum.
To see that $\E_{\gamma}$ is well-defined, take $\beta\in\Gamma'_{\gamma,\chi}$.  
Thus
\begin{align*}
\overline\chi(\beta)\E^*|\widetilde\gamma\widetilde\beta
&=\overline\chi(\beta)\E^*|[I,\zeta_{\gamma}(\beta)]\widetilde{\gamma\beta\gamma^{-1}}|\widetilde\gamma\\
&=\E^*|\widetilde{\gamma\beta\gamma^{-1}}|\widetilde\gamma\\
&=\E^*|\widetilde\gamma
\end{align*}
since $\gamma\beta\gamma^{-1}\in\Gamma_{\infty}\Gamma(4\stufe).$
Thus $\E_{\gamma}$ is well-defined.
With $\delta,\delta'$ varying as above, by Lemma 3.1 we have
\begin{align*}
[\Gamma_{\gamma}:\Gamma'_{\gamma,\chi}]\E_{\gamma}
&=\sum_{\delta'\delta}\overline\chi(\delta'\delta)\E^*|\widetilde\gamma\widetilde{\delta'\delta}\\
&=\sum_{\delta}\overline\chi(\delta)\left(\sum_{\delta'}\overline\chi\zeta_{\gamma}^{-k}(\delta')\right)\E^*|\widetilde\gamma\widetilde\delta.
\end{align*}
Since $\chi\zeta_{\gamma}^k$ is a homomorphism on $\Gamma_{\gamma}$, we have that $\E_{\gamma}=0$ unless
$\chi\zeta_{\gamma}^k$ is trivial on $\Gamma_{\gamma}$ (meaning that $\Gamma'_{\gamma,\chi}=\Gamma_{\gamma}$).
Note that for any $\alpha\in\Gamma_0(4\stufe)$, we have
$$\Gamma_{\infty}\gamma\Gamma_0(4\stufe)
=\cup_{\delta}\Gamma_{\infty}\Gamma(4\stufe)\gamma\delta\alpha \text{ (disjoint),}$$
and since $\widetilde\delta\widetilde\alpha=\widetilde{\delta\alpha}$ for any $\delta$ as above, we get
$$\E_{\gamma}|\widetilde\alpha
=\sum_{\delta}\overline\chi(\delta)\E^*|\widetilde\gamma\widetilde{\delta\alpha}
=\chi(\alpha)\E_{\gamma}.$$
Hence
$\E_{\gamma}$ is a weight $k/2$, level $4\stufe$ Eisenstein series with character $\chi$.

Henceforth, we assume that $n, k, \stufe\in\Z_+$ are fixed with $k$ odd and $n>(k+1)/2$, and we fix a character $\chi$ modulo $4\stufe$.

\begin{prop}  Take $\gamma=\begin{pmatrix}*&*\\M_0&N_0\end{pmatrix}\in Sp_n(\Z)$, $\varphi_{_\gamma}(\tau)$ an automorphy factor for $\gamma$, and $\chi$ a character modulo $4\stufe$.  Suppose that $\Gamma'_{\gamma,\chi}=\Gamma_{\gamma}$.  Then $\E_{\gamma}\not=0$ and
$$\E_{\gamma}(\tau)=\sum_{\beta}\overline\chi(\beta)1(\tau)|\widetilde\gamma\widetilde\beta$$
where $\Gamma_{\infty}\gamma\Gamma_0(4\stufe)=\cup_{\beta}\Gamma_{\infty}\gamma\beta$ (disjoint).
If $\gamma\in\Gamma_0(4)$ then
$$\E_{\gamma}(\tau)=\sum_{(M\,N)}\overline\chi(M,N)\left(\frac{\overline\G_M(N)}{\sqrt{\det N}}S_{M,N}(\tau)\right)^{-k}$$
where $GL_n(\Z)(M\ N)$ varies over $GL_n(\Z)(M_0\ N_0)\Gamma_0(4\stufe)$ and
$\chi(M,N)=\chi(\beta)$ where $\beta\in\Gamma_0(4\stufe)$ so that $GL_n(\Z)(M\ N)=GL_n(\Z)(M_0\ N_0)\beta.$
\end{prop}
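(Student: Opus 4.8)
The plan is to start from the definition of $\E_{\gamma}$, collapse it using the hypothesis $\Gamma'_{\gamma,\chi}=\Gamma_{\gamma}$, substitute the expansion of $\E^*$, and reorganise the resulting double coset sum into a single sum over left $\Gamma_{\infty}$-cosets; the $\gamma\in\Gamma_0(4)$ formula then follows termwise from the Transformation Formula, and nonvanishing follows from linear independence of the summands. First I would use $\Gamma'_{\gamma,\chi}=\Gamma_{\gamma}$: then $[\Gamma_{\gamma}:\Gamma'_{\gamma,\chi}]=1$ and $\delta'=I$ is the only coset representative, so the definition reduces to $\E_{\gamma}=\sum_{\delta}\overline\chi(\delta)\,\E^*|\widetilde\gamma\widetilde\delta$ with $\Gamma_0(4\stufe)=\sqcup_{\delta}\Gamma_{\gamma}\delta$. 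I would note this is independent of the choice of $\delta$: replacing $\delta$ by $\alpha\delta$ with $\alpha\in\Gamma_{\gamma}=\Gamma'_{\gamma,\chi}$ changes the summand by the factor $\overline\chi(\alpha)\E^*|\widetilde\gamma\widetilde\alpha=\E^*|\widetilde\gamma$, which is the defining property of $\Gamma'_{\gamma,\chi}$ (here $\widetilde\alpha\widetilde\delta=\widetilde{\alpha\delta}$ since $\alpha,\delta\in\Gamma_0(4\stufe)\subseteq\Gamma_0(4)$).

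Next I would substitute $\E^*=\sum_{\delta^*}1|\widetilde{\delta^*}$, the sum running over $\Gamma_{\infty}\Gamma(4\stufe)=\sqcup_{\delta^*}\Gamma_{\infty}\delta^*$, and reindex. The combinatorial engine is the coset tower $\Gamma_{\infty,\gamma}\subseteq\Gamma_{\gamma}\subseteq\Gamma_0(4\stufe)$, where $\Gamma_{\infty,\gamma}=\gamma^{-1}\Gamma_{\infty}\gamma\cap\Gamma_0(4\stufe)$ is the stabiliser of $\Gamma_{\infty}\gamma$. Because $\Gamma_{\infty}\Gamma(4\stufe)$ is a group and $\Gamma_{\gamma}$ is the stabiliser of $\Gamma_{\infty}\Gamma(4\stufe)\gamma$, the matrices $\delta^*\gamma\delta$ run exactly once over a complete set of representatives for the left $\Gamma_{\infty}$-cosets $\Gamma_{\infty}\gamma\beta$ exhausting $\Gamma_{\infty}\gamma\Gamma_0(4\stufe)$. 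For each term I would collapse $1|\widetilde{\delta^*}\widetilde\gamma\widetilde\delta$ to $1|\widetilde\gamma\widetilde\beta$ by writing $\delta^*\gamma\delta=\mu\gamma\beta$ with $\mu\in\Gamma_{\infty}$ and invoking $1|\widetilde\mu=1$ together with the homomorphism property from Lemma 3.1.

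The main obstacle is the bookkeeping of automorphy-factor phases and character values in this last collapse, i.e.\ showing each coefficient is exactly $\overline\chi(\beta)$. Since two automorphy factors for one matrix differ by a unimodular constant, the equality $\delta^*\gamma\delta=\mu\gamma\beta$ introduces a phase $c$ with $1|\widetilde{\delta^*}\widetilde\gamma\widetilde\delta=c^{-k}\,1|\widetilde\gamma\widetilde\beta$, and I must verify $\overline\chi(\delta)\,c^{-k}=\overline\chi(\beta)$. I would do this by identifying the ambiguity with $\zeta_{\gamma}$: the passage from $\delta$ to $\beta$ differs by an element $\eta\in\Gamma_{\infty,\gamma}\subseteq\Gamma_{\gamma}$, and the defining relation $\widetilde\gamma\widetilde\eta\widetilde\gamma^{-1}\,\widetilde{(\gamma\eta^{-1}\gamma^{-1})}=[I,\zeta_{\gamma}(\eta)]$ pins the phase down as a power of $\zeta_{\gamma}(\eta)$; the hypothesis $\Gamma'_{\gamma,\chi}=\Gamma_{\gamma}$, i.e.\ $\chi\zeta_{\gamma}^k\equiv1$ on $\Gamma_{\gamma}$, then forces $\overline\chi(\delta)\,c^{-k}=\overline\chi(\beta)$. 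Equivalently, and perhaps more cleanly, I would first check that $F:=\sum_{\beta}\overline\chi(\beta)\,1|\widetilde\gamma\widetilde\beta$ is well defined (independent of the $\beta$-representatives) precisely because $\chi\zeta_{\gamma}^k\equiv1$, and then identify $F=\E_{\gamma}$ by regrouping the $\beta$'s by their $\Gamma_{\gamma}$-coset $\delta$, the inner sum over $\delta^*$ reconstituting $\E^*|\widetilde\gamma\widetilde\delta$.

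For the explicit formula when $\gamma\in\Gamma_0(4)$, every $\beta\in\Gamma_0(4\stufe)\subseteq\Gamma_0(4)$ gives $\gamma\beta\in\Gamma_0(4)$, so $\widetilde\gamma\widetilde\beta=\widetilde{\gamma\beta}$ honestly and $1|\widetilde{\gamma\beta}=(\theta(\gamma\beta\tau)/\theta(\tau))^{-k}$. Writing $\gamma\beta=\begin{pmatrix}*&*\\M&N\end{pmatrix}$, the condition $4\mid M$ forces $\rank_2(M\ N)=\rank_2 N=n$, so $\det N$ is odd and in particular nonzero; hence Proposition 2.1 applies to every term and gives $1|\widetilde{\gamma\beta}=\left(\frac{\overline\G_M(N)}{\sqrt{\det N}}S_{M,N}(\tau)\right)^{-k}$. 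Reindexing the left $\Gamma_{\infty}$-cosets by the bottom row $(M\ N)=(M_0\ N_0)\beta$, which is well defined modulo left multiplication by the $GL_n(\Z)$ block of $\Gamma_{\infty}$, produces the sum over $GL_n(\Z)(M\ N)$ in $GL_n(\Z)(M_0\ N_0)\Gamma_0(4\stufe)$; that the summand and the coefficient $\overline\chi(M,N)$ depend only on this class I would justify not by chasing signs of $\sqrt{\det N}$ in Proposition 2.2(b),(c) but directly from $1|\widetilde\mu=1$ for $\mu\in\Gamma_{\infty}$. Finally, for $\E_{\gamma}\neq0$ I would argue that the summands $1|\widetilde\gamma\widetilde\beta$, attached to distinct cosets $\Gamma_{\infty}\gamma\beta$ (equivalently distinct $GL_n(\Z)$-classes of coprime symmetric pairs), are linearly independent over $\C$; since the coset $\Gamma_{\infty}\gamma$ (the $\beta=I$ term) occurs with coefficient $\overline\chi(I)=1\neq0$, no cancellation is possible, so $\E_{\gamma}\neq0$.
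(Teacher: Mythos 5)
Your overall route is the paper's own: collapse the definition of $\E_{\gamma}$ using the hypothesis $\Gamma'_{\gamma,\chi}=\Gamma_{\gamma}$, substitute the expansion of $\E^*$, reorganise $\cup_{\delta^*,\delta}\Gamma_{\infty}\delta^*\gamma\delta$ into left $\Gamma_{\infty}$-cosets exhausting $\Gamma_{\infty}\gamma\Gamma_0(4\stufe)$ (the paper does this by writing $\delta^*\gamma=\gamma(\gamma^{-1}\delta^*\gamma)$ with $\gamma^{-1}\delta^*\gamma\in\Gamma(4\stufe)\subseteq\Gamma_{\gamma}=\Gamma'_{\gamma,\chi}$, which is exactly the content of your phase bookkeeping via $\zeta_{\gamma}$), and then read off the explicit formula for $\gamma\in\Gamma_0(4)$ from the Transformation Formula together with the bijection between cosets $\Gamma_{\infty}\gamma\beta$ and classes $GL_n(\Z)(M\ N)$. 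All of that is sound, and your observation that $4\mid M$ forces $\det N$ to be odd, so that every term of the explicit formula makes sense, is a worthwhile point the paper leaves implicit.

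The one genuine gap is the nonvanishing argument. The sum over $\beta$ is infinite, and linear independence is a statement about \emph{finite} linear combinations; it does not by itself preclude an infinite, absolutely convergent sum of linearly independent functions from vanishing identically, so ``the $\beta=I$ coefficient is $1\neq0$, hence no cancellation'' is not a valid inference as written. You need a device that extracts the coefficient of the coset $\Gamma_{\infty}\gamma$ from the completed sum. The paper's device is to slash with $\widetilde\gamma^{-1}$ and let $\tau\to i\infty$: one checks that $\lim_{\tau\to i\infty}1(\tau)|\widetilde\delta^*\widetilde\gamma\widetilde\delta\widetilde\gamma^{-1}=0$ unless $\delta^*\gamma\delta\gamma^{-1}\in\Gamma_{\infty}$, which forces $\delta\in\Gamma_{\gamma}$, whence $\lim_{\tau\to i\infty}\E_{\gamma}(\tau)|\widetilde\gamma^{-1}=\lim_{\tau\to i\infty}\E^*(\tau)=1$; the termwise limit is legitimate because the series converges absolutely and locally uniformly for $n>(k+1)/2$. (This is also the mechanism behind the linear independence you invoke, as used in Proposition 3.3.) With that step supplied, the rest of your argument goes through.
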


\begin{proof} 
With $\delta^*\in\Gamma(4\stufe)$ and $\delta\in\Gamma_0(4\stufe)$ so that $\Gamma_{\infty}\Gamma(4\stufe)=\cup_{\delta^*}\Gamma_{\infty}\delta^*$ (disjoint),
$\Gamma_0(4\stufe)=\cup_{\delta}\Gamma_{\gamma}\delta$ (disjoint), one easily sees that $\Gamma_{\infty}\gamma\Gamma_0(4\stufe)=\cup_{\delta^*,\delta}\Gamma_{\infty}\delta^*\gamma\delta$ (disjoint). 
Also, $\gamma^{-1}\delta^*\gamma\in\Gamma(4\stufe)$, so
$$\Gamma_{\infty}\gamma\Gamma_0(4\stufe)=\cup_{\delta^*,\delta}\Gamma_{\infty}\gamma(\gamma^{-1}\delta^*\gamma)\delta
\text{ (disjoint)}.$$
Since $\gamma^{-1}\delta^*\gamma\in\Gamma_{\gamma}=\Gamma'_{\gamma,\chi}$,
we have
$$1(\tau)|\widetilde\delta^*\widetilde\gamma
=1(\tau)|\widetilde\gamma|\widetilde\gamma^{-1}\widetilde\delta^*\widetilde\gamma
=1(\tau)|\widetilde\gamma|\widetilde{\gamma^{-1}\delta^*\gamma}.$$
Hence
$$\E_{\gamma}(\tau)=\sum_{\beta}\overline\chi(\beta)1(\tau)|\widetilde\gamma\widetilde\beta$$
where $\Gamma_{\infty}\gamma\Gamma_0(4\stufe)=\cup_{\beta}\Gamma_{\infty}\gamma\beta$ (disjoint).
% One easily checks that with $\delta\in\Gamma_0(4\stufe)$ varying so that 
% $\Gamma_0(4\stufe)=\cup_{\delta}\Gamma_{\gamma}\delta$ (disjoint), we have
% $$\Gamma_{\infty}\gamma\Gamma_0(4\stufe)
% =\cup_{\delta}\Gamma_{\infty}\Gamma(4\stufe)\gamma\delta \text{ (disjoint).}$$
Also, for $\delta^*\in\Gamma_{\infty}\Gamma(4\stufe)$ and $\delta\in\Gamma_0(4\stufe)$, we have
$$\lim_{\tau\to i\infty}1(\tau)|\widetilde\delta^*\widetilde\gamma\widetilde\delta\widetilde\gamma^{-1}=0$$
unless $\delta^*\gamma\delta\gamma^{-1}\in\Gamma_{\infty}$, in which case $\gamma\delta\gamma^{-1}\in\Gamma_{\infty}\Gamma(4\stufe)$ and so $\delta\in\Gamma_{\gamma}$.  Thus
$$\lim_{\tau\to i\infty}\E_{\gamma}(\tau)|\widetilde\gamma^{-1}
=\lim_{\tau\to i\infty}\E^*(\tau)=1,$$
so $\E_{\gamma}\not=0$.

Now suppose that $\gamma\in\Gamma_0(4)$.
If $\beta\in\begin{pmatrix}*&*\\M&N\end{pmatrix}
\in\Gamma_{\infty}\gamma\Gamma_0(4\stufe)$ 
then $(M\ N)\in GL_n(\Z)(M_0\ N_0)\Gamma_0(4\stufe)$.  On the other hand, suppose $(M\ N)$ is a coprime symmetric pair.  Thus there is some 
$\alpha=\begin{pmatrix}*&*\\M&N\end{pmatrix}
\in\Gamma_{\infty}\gamma\Gamma_0(4\stufe),$
and if $(M\ N)\in GL_n(\Z)(M_0\ N_0)\delta$ for some $\delta\in\Gamma_0(4\stufe)$, then one easily checks that $\alpha\in\Gamma_{\infty}\gamma\delta$.
Further, with $\alpha'=\begin{pmatrix}*&*\\M'&N'\end{pmatrix}$, we have
$$GL_n(\Z)(M\ N)=GL_n(\Z)(M'\ N')=GL_n(\Z)(M_0\ N_0)\delta$$
if and only if 
$\Gamma_{\infty}\alpha=\Gamma_{\infty}\alpha'=\Gamma_{\infty}\gamma\delta.$
Thus 
$$\E_{\gamma}(\tau)=\sum_{(M\,N)}\overline\chi(M,N)\left(\frac{\overline\G_M(N)}{\sqrt{\det N}}S_{M,N}(\tau)\right)^{-k}$$
where $GL_(\Z)(M\ N)$ varies (once) over $GL_n(\Z)(M_0\ N_0)\Gamma_0(4\stufe).$
\end{proof}

Next we describe a basis for the space of Eisenstein series.

\begin{prop}  
%Fix $k,\stufe\in\Z_+$ with $k$ odd, and fix a character $\chi$ modulo $4\stufe$.  
Let $\gamma$ vary over elements of $Sp_n(\Z)$ so that
$$Sp_n(\Z)=\cup_{\gamma}\Gamma_{\infty}\gamma\Gamma_0(4\stufe).$$
Then the corresponding nonzero weight $k/2$, level $4\stufe$ Eisenstein series with character $\chi$  are linearly independent.
Further, suppose that $\gamma\in Sp_n(\Z)$,
$\beta\in\Gamma_0(4\stufe)$, $\beta^*\in\Gamma_{\infty}\Gamma(4\stufe)$ so that $\alpha=\beta^*\gamma\beta$; set $\widetilde\alpha=\widetilde\beta^*\widetilde\gamma\widetilde\beta$.  Then $\E_{\alpha}=\chi(\beta)\E_{\gamma}.$
\end{prop}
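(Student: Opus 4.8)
The statement has two independent parts, and the plan is to treat them separately.

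\emph{Linear independence.} The idea is to read off each coefficient by evaluating at the cusp attached to $\gamma$. Suppose $\sum_\gamma c_\gamma \E_\gamma = 0$, the sum being over the double-coset representatives $\gamma$ with $\E_\gamma\neq 0$ (equivalently $\Gamma'_{\gamma,\chi}=\Gamma_\gamma$, so that Proposition 3.3 applies). Fix one such representative $\gamma_0$, apply the slash operator $|\widetilde{\gamma_0}^{-1}$ to the relation, and let $\tau\to i\infty$. For the term $\gamma=\gamma_0$, the proof of Proposition 3.3 gives $\lim_{\tau\to i\infty}\E_{\gamma_0}|\widetilde{\gamma_0}^{-1}=1$. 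For $\gamma$ in a different double coset I would expand $\E_\gamma=\sum_\beta\overline\chi(\beta)1(\tau)|\widetilde\gamma\widetilde\beta$ via Proposition 3.3 and note that each term $1(\tau)|\widetilde\gamma\widetilde\beta\widetilde{\gamma_0}^{-1}$ tends to $0$ unless $\gamma\beta\gamma_0^{-1}\in\Gamma_\infty$; the latter would force $\gamma_0\in\Gamma_\infty\gamma\Gamma_0(4\stufe)$, contradicting that $\gamma,\gamma_0$ lie in distinct double cosets. Hence $\lim_{\tau\to i\infty}\E_\gamma|\widetilde{\gamma_0}^{-1}=0$, and the relation collapses to $c_{\gamma_0}=0$. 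As $\gamma_0$ is arbitrary, all coefficients vanish.

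\emph{The transformation law.} I would work directly from the definition of $\E_\gamma$ rather than the Fourier expansion. First I would record that $\alpha$ and $\gamma$ lie in the same double coset: since $\Gamma(4\stufe)$ is normal in $Sp_n(\Z)$ and $\beta^*\in\Gamma_\infty\Gamma(4\stufe)$, $\beta\in\Gamma_0(4\stufe)$, one gets $\Gamma_\infty\Gamma(4\stufe)\alpha=\Gamma_\infty\Gamma(4\stufe)\gamma\beta$ and thence $\Gamma_\infty\alpha\Gamma_0(4\stufe)=\Gamma_\infty\gamma\Gamma_0(4\stufe)$. Writing the definition as a single sum $\E_\gamma=\frac{1}{[\Gamma_\gamma:\Gamma'_{\gamma,\chi}]}\sum_\epsilon\overline\chi(\epsilon)\E^*|\widetilde\gamma\widetilde\epsilon$ over $\epsilon\in\Gamma'_{\gamma,\chi}\backslash\Gamma_0(4\stufe)$ (and similarly for $\E_\alpha$ over $\eta\in\Gamma'_{\alpha,\chi}\backslash\Gamma_0(4\stufe)$), I would substitute $\widetilde\alpha=\widetilde{\beta^*}\widetilde\gamma\widetilde\beta$ and use two facts: $\E^*|\widetilde{\beta^*}=\E^*$ (the invariance of $\E^*$ under $\Gamma_\infty\Gamma(4\stufe)$, proved by reindexing its defining sum), and $\widetilde\beta\widetilde\eta=\widetilde{\beta\eta}$ (theta multiplicativity on $\Gamma_0(4)$). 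This yields $\E^*|\widetilde\alpha\widetilde\eta=\E^*|\widetilde\gamma\widetilde{\beta\eta}$. Setting $\epsilon=\beta\eta$ and invoking the multiplicativity of $\chi$ on $\Gamma_0(4\stufe)$ (so that $\overline\chi(\eta)=\chi(\beta)\overline\chi(\epsilon)$), the sum for $\E_\alpha$ becomes $\chi(\beta)$ times the sum for $\E_\gamma$, provided the reindexing $\eta\mapsto\beta\eta$ carries $\Gamma'_{\alpha,\chi}\backslash\Gamma_0(4\stufe)$ bijectively onto $\Gamma'_{\gamma,\chi}\backslash\Gamma_0(4\stufe)$ and the two indices agree.

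\emph{The main obstacle.} That last proviso is the crux: it requires the conjugation identity $\Gamma'_{\alpha,\chi}=\beta^{-1}\Gamma'_{\gamma,\chi}\beta$. The stabilizer version $\Gamma_\alpha=\beta^{-1}\Gamma_\gamma\beta$ follows formally from $\Gamma_\infty\Gamma(4\stufe)\alpha=\Gamma_\infty\Gamma(4\stufe)\gamma\beta$. The genuinely delicate point is to show the automorphy cocycle is conjugation-invariant, i.e.\ $\zeta_\alpha(\beta^{-1}\alpha'\beta)=\zeta_\gamma(\alpha')$ for $\alpha'\in\Gamma_\gamma$. Here I would put $\eta=\beta^{-1}\alpha'\beta$, expand $\widetilde\alpha\widetilde\eta\widetilde\alpha^{-1}$ and $\widetilde{\alpha\eta^{-1}\alpha^{-1}}$ using $\widetilde\alpha=\widetilde{\beta^*}\widetilde\gamma\widetilde\beta$, and collapse the inner factors $\widetilde\beta(\cdot)\widetilde\beta^{-1}$ and $\widetilde{\beta^*}(\cdot)\widetilde{\beta^*}^{-1}$ by theta multiplicativity (all the matrices involved, including the conjugates $\gamma\alpha'^{\pm1}\gamma^{-1}\in\Gamma_\infty\Gamma(4\stufe)$, lie in $\Gamma_0(4)$), reaching $[I,\zeta_\alpha(\eta)]=\widetilde{\beta^*}[I,\zeta_\gamma(\alpha')]\widetilde{\beta^*}^{-1}$. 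Since $[I,c]$ is central for a constant $c$, conjugation by $\widetilde{\beta^*}$ is trivial and $\zeta_\alpha(\eta)=\zeta_\gamma(\alpha')$. Combined with $\chi(\eta)=\chi(\alpha')$, this gives $\chi\zeta_\alpha^k(\eta)=\chi\zeta_\gamma^k(\alpha')$, hence the claimed equality of subgroups (and of indices, these being conjugate). Carefully tracking which matrices genuinely lie in $\Gamma_0(4)$ so that theta multiplicativity is available, and confirming the centrality of the scalar factors, is where I expect the real care to be needed.
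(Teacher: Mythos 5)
Your proposal is correct and follows essentially the same route as the paper: linear independence is obtained by applying $\widetilde\gamma_0^{-1}$ and letting $\tau\to i\infty$ exactly as in the paper's appeal to the proof of Proposition 3.2, and the transformation law comes from the reindexing $\eta\mapsto\beta\eta$ together with $\E^*|\widetilde{\beta^*}=\E^*$ and theta-multiplicativity on $\Gamma_0(4)$. The only divergence is that you work from the normalized definition and therefore prove the conjugation identity $\Gamma'_{\alpha,\chi}=\beta^{-1}\Gamma'_{\gamma,\chi}\beta$ via $\zeta_\alpha(\beta^{-1}\alpha'\beta)=\zeta_\gamma(\alpha')$ and the centrality of $[I,c]$, whereas the paper instead quotes the unnormalized coset sum and skips this verification; your extra lemma is correct and in fact supplies the detail that guarantees $\E_\alpha=0$ precisely when $\E_\gamma=0$.
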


\begin{proof}  
Suppose $\gamma,\beta\in Sp_n(\Z)$ so that $\beta\not\in\Gamma_{\infty}\gamma\Gamma_0(4\stufe)$ and $\E_{\gamma},\E_{\beta}\not=0$.  As shown in the proof of Proposition 3.2,
$$\lim_{\tau\to i\infty}\E_{\gamma}|\widetilde\gamma^{-1}=1,$$
but as
$$\lim_{\tau\to i\infty}1(\tau)|\widetilde\delta^*\widetilde\beta\widetilde\delta\widetilde\gamma^{-1}=0$$ for all $\delta^*\in\Gamma_{\infty}\Gamma(4\stufe)$ and $\delta\in\Gamma_0(4\stufe)$, we have
$$\lim_{\tau\to i\infty}\E_{\beta}(\tau)|\widetilde\gamma^{-1}=0.$$
Consequently $\E_{\gamma}$ is linearly independent of the set
$$\{\E_{\beta}:\ \beta\in Sp_n(\Z),\ \beta\not\in\Gamma_{\infty}\gamma\Gamma_0(4\stufe)\ \}.$$

Now take $\gamma\in Sp_n(\Z)$, $\beta^*\in\Gamma_{\infty}$, $\beta\in\Gamma_0(4\stufe)$.
Set $\widetilde\alpha=\widetilde\beta^*\widetilde\gamma\widetilde\beta$.  With $\delta'$ verying so that 
$$\Gamma_{\infty}\alpha\Gamma_0(4\stufe)=\cup_{\delta'}\Gamma_{\infty}\alpha\delta'
\text{ (disjoint),}$$
we have
$$\Gamma_{\infty}\gamma\Gamma_0(4\stufe)
=\cup_{\delta'}\Gamma_{\infty}\gamma(\beta\delta') \text{ (disjoint).}$$
Hence
$$\E_{\alpha}=\sum_{\delta'}\overline\chi(\delta')\E^*|\widetilde\alpha\widetilde\delta'
=\sum_{\delta'}\overline\chi(\delta')\E^*|\widetilde\gamma\widetilde{(\beta\delta')}
=\chi(\beta)\E_{\gamma}$$
(recall that for $\beta,\delta'\in\Gamma_0(4)$, we have $\widetilde\beta\widetilde\delta'
=\widetilde{\beta\delta'}$).
\end{proof}

In this paper we are particularly interested in Eisenstein series of level $4\stufe$ where $\stufe$ is odd and square-free.  Below we introduce some terminology and then exhibit a set of representatives for the degree zero cusps.

\smallskip\noindent{\bf Definition.}  Suppose $\stufe\in\Z_+$ is odd and square-free.
We say 
$$\sigma=((\stufe_0,\ldots,\stufe_n),(d,d',\epsilon))$$
is an admissible type for level $4\stufe$ if 
$(\stufe_0,\ldots,\stufe_n)$ is a multiplicative partition of $\stufe$, $d,d'\in\Z_{\ge 0}$ so that $d+d'\le n$, and $\epsilon=+$ or $-$ if $2|d'$ with $d'\not=0$, $\epsilon=+$ otherwise.  
We say $M\in\Z^{n,n}_{\sym}$ is of $\sigma$-type if
$$M\equiv I_s\perp\big<0\big>^{n-s}\ (\stufe_s)$$
for all $s$, $0\le s\le n$,
and 
$$M\equiv 
\begin{cases} I_d\perp 2I_{d'}\perp\big<0\big>^{n-d-d'}\ (4)&\text{if $\epsilon=+$,}\\
I_d\perp2\begin{pmatrix}0&1\\1&0\end{pmatrix}^{d'/2}\perp\big<0\big>^{n-d-d'}
\ (4)
&\text{if $\epsilon=-$.}
\end{cases}$$

\begin{prop}  Suppose $\stufe\in\Z_+$ is odd and square-free.  
For each admissible type $\sigma$ for level $4\stufe$, fix $M_{\sigma}$ of $\sigma$-type.  Then
then with $\sigma$ varying over all admissible types,
$$\left\{\begin{pmatrix}I&0\\M_{\sigma}&I\end{pmatrix}\right\}_{\sigma}$$ is a set of representatives for
$\Gamma_{\infty}\backslash Sp_n(\Z)/\Gamma_0(4\stufe).$
\end{prop}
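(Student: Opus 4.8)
The plan is to replace the double coset space by a finite one and then analyze it one prime at a time, exploiting the factorization of $4\stufe$ into the coprime parts $4$ and the odd primes dividing $\stufe$.

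First I would show that reduction modulo $4\stufe$ induces a bijection
\[
\Gamma_{\infty}\backslash Sp_n(\Z)/\Gamma_0(4\stufe)\ \xrightarrow{\ \sim\ }\ \overline P\backslash Sp_n(\Z/4\stufe)/\overline P,
\]
where $\overline P$ is the Siegel parabolic of $Sp_n(\Z/4\stufe)$ (the matrices whose lower-left block is $0$). The reduction map $\pi\colon Sp_n(\Z)\to Sp_n(\Z/4\stufe)$ is surjective with kernel $\Gamma(4\stufe)$, and one checks directly that $\pi(\Gamma_{\infty})=\pi(\Gamma_0(4\stufe))=\overline P$, so the induced map on double cosets is well defined and surjective. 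For injectivity, if $\pi(\gamma')=\overline p_1\,\pi(\gamma)\,\overline p_2$ with $\overline p_i\in\overline P$, I would lift $\overline p_1=\pi(\alpha)$ with $\alpha\in\Gamma_{\infty}$ and $\overline p_2=\pi(\beta)$ with $\beta\in\Gamma_0(4\stufe)$, so that $\gamma'=\alpha\,\eta\,\gamma\,\beta$ for some $\eta\in\Gamma(4\stufe)$. Since $\Gamma(4\stufe)$ is normal in $Sp_n(\Z)$, I rewrite $\eta\gamma=\gamma(\gamma^{-1}\eta\gamma)$ with $\gamma^{-1}\eta\gamma\in\Gamma(4\stufe)\subseteq\Gamma_0(4\stufe)$, whence $\gamma'\in\Gamma_{\infty}\gamma\,\Gamma_0(4\stufe)$. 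In terms of the bottom row $(C\ D)$ of $\gamma$, this says that the double coset depends only on $(C\ D)$ modulo $4\stufe$, up to left multiplication by $GL_n$ and the right action of $\overline P$.

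By the Chinese Remainder Theorem $Sp_n(\Z/4\stufe)\cong Sp_n(\Z/4)\times\prod_{p\mid\stufe}Sp_n(\F_p)$, with $\overline P$ factoring compatibly, so the double coset space is the product of the local ones. For an odd prime $p\mid\stufe$, $\overline P\backslash Sp_n(\F_p)/\overline P$ is the set of relative positions of two Lagrangian subspaces of $\F_p^{2n}$; by Witt's theorem these are classified by the dimension of the intersection, equivalently by $s=\rank_p C\in\{0,\dots,n\}$, with exactly one class for each value and, crucially, no discriminant invariant surviving, since $\overline P$ already contains the scalings that would absorb it. Recording, for each $p\mid\stufe$, the value $s$ with $p\mid\stufe_s$ is exactly the datum of a multiplicative partition $(\stufe_0,\dots,\stufe_n)$, and the representative $(M_{\sigma}\ I)$ has $\rank_p M_{\sigma}=s$ precisely when $p\mid\stufe_s$. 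This part is the half-integral-weight analogue of the cusp description already established for integral weight in \cite{int wt}, which I would cite rather than repeat.

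The new and delicate content is the factor $4$, i.e. the classification of $\overline P\backslash Sp_n(\Z/4)/\overline P$, which I expect to be the main obstacle. Working with a representative $(M\ I)$, one uses the residual congruence action $M\mapsto UM\,{}^tU$ together with the parabolic moves to bring $M$ to the normal form $I_d\perp 2N\perp\big<0\big>^{n-d-d'}$, where $d=\rank_2 M$ and $2N$ is the $2$-divisible part, $N$ symmetric over $\Z/2$ of rank $d'$. The residual invariant is the $\F_2$-congruence class of $N$, and a nondegenerate symmetric bilinear form over $\F_2$ is of exactly one of two types: the ``odd'' type $I_{d'}$, available for every $d'$, or the alternating type $\big(\begin{smallmatrix}0&1\\1&0\end{smallmatrix}\big)^{d'/2}$, available only for even $d'>0$; this dichotomy is recorded by $\epsilon=\pm$. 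The obstacle is precisely that $\Z/4$ is not a field: I would need to (i) justify the normal form over $\Z/4$ by a $2$-adic diagonalization of the unit block, and (ii) verify that the extra parabolic moves over $\Z/4$ do \emph{not} merge the classes $\epsilon=+$ and $\epsilon=-$, in contrast to the odd case where analogous moves kill the discriminant. Step (ii) requires tracking the $\overline P$-action carefully modulo $4$ and is where the argument is most delicate. Finally, since $\stufe$ is odd and square-free, the congruences modulo the $\stufe_s$ and modulo $4$ are at coprime moduli, so each admissible type $\sigma$ has a symmetric integral $M_{\sigma}$ of $\sigma$-type with $\begin{pmatrix}I&0\\M_{\sigma}&I\end{pmatrix}\in Sp_n(\Z)$; by construction these representatives realize every combination of the local invariants exactly once, and since each invariant is a double-coset invariant, distinct admissible types give distinct double cosets while every double coset is attained. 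This yields the claimed complete, irredundant set of representatives.
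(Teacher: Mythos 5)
Your reduction to the finite double coset space $\overline P\backslash Sp_n(\Z/4\stufe)/\overline P$ and the CRT factorization into local problems is a sensible reframing, and your treatment of the odd primes (relative position of Lagrangians, rank as the only surviving invariant) matches what the paper gets by citing Proposition 3.5 of \cite{int wt}. But the proposal has a genuine gap exactly where you flag the ``delicate'' part: the classification modulo $4$ is the entire new content of this proposition, and you assert rather than prove both halves of it. Concretely, (i) you claim the unit block can be normalized to $I_d$ modulo $4$, but congruence $M\mapsto UM\,^tU$ alone cannot do this (e.g.\ $\big<-1\big>$ is not congruent to $\big<1\big>$ over $\Z/4$ since $-1$ is not a square mod $4$); the paper needs the Jordan splitting $E'M'\,^tE'=J_0\perp 2J_1\perp 4J$ over $\Z_2$ from \S63 of \cite{O'M}, explicit unimodular congruences modulo $2$ to force $J_1\equiv I_{d'}$ or $\big(\begin{smallmatrix}0&1\\1&0\end{smallmatrix}\big)^{d'/2}$, and then a right multiplication by a specific $\delta\in\Gamma_0(4\stufe)$ (built from $\overline J_0$ with $J_0\overline J_0\equiv I\ (4)$) to replace $J_0$ by $I_d$ while keeping $N\equiv I$. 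And (ii) you do not show that the classes $\epsilon=+$ and $\epsilon=-$ remain distinct under the full two-sided action; the paper does this by a block-by-block analysis of the relation $M_{\sigma'}\equiv EM_{\sigma}A\ (4)$ together with $A\,^tD\equiv I$, deducing $J_1'\equiv\,^tA_4J_1A_4\ (2)$ and then separating the two types by the presence or absence of an anisotropic vector over $\F_2$. Without (i) and (ii) carried out, the proposition is not proved.

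A secondary point: your claimed bijection with $\overline P\backslash Sp_n(\Z/4\stufe)/\overline P$ uses $\pi(\Gamma_{\infty})=\overline P$, which is false, since $GL_n(\Z)\to GL_n(\Z/4\stufe)$ is not surjective (determinant $\pm1$ only). The map from $\Gamma_{\infty}\backslash Sp_n(\Z)/\Gamma_0(4\stufe)$ onto the $\overline P$-double cosets is surjective but its injectivity needs an extra argument (the discrepancy is absorbed by the right $\overline P$-action, since the upper-left block of an element of $\Gamma_0(4\stufe)$ can have any unit determinant, but you should say so). The paper sidesteps this entirely by working with $GL_n(\Z)$-classes of coprime symmetric pairs $(M\ N)$ and never forming the finite double coset space.
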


\begin{proof}  Fix $\gamma=\begin{pmatrix}*&*\\M&N\end{pmatrix}\in Sp_n(\Z)$.  By Proposition 35 \cite{int wt} we know there is some $M'\in\Z^{n,n}_{\sym}$ so that
for all primes $q|4\stufe$,
$$M'\equiv I_{r(q)}\perp\big<0\big>^{n-r(q)}\ (q)$$
where $r(q)=\rank_qM$.  Let $(\stufe_0,\ldots,\stufe_n)$ be the multiplicative partition of $\stufe$ so that for each prime $q|\stufe$, $q|\stufe_s$ if and only if $r(q)=s$.

Set $d=r(2)$.
By \S63 \cite{O'M}, we know there is some $E'\in SL_n(\Z_2)$ so that 
% $E'M'\,^tE'$ is an orthogonal sum of unary and binary components.  So 
$$E'M'\,^tE'=J_0\perp 2J_1\perp 4J$$
where $J_0$, $J_1$ are unimodular over $\Z_2$, $J_0$ is $d\times d$, and $$J_1=I_{d''}\perp\begin{pmatrix}a_1&b_1\\b_1&c_1\end{pmatrix}
\perp\cdots\perp\begin{pmatrix}a_r&b_r\\b_r&c_r\end{pmatrix}$$
with $a_ic_i-b_i^2\not\equiv0\ (2)$ ($1\le i\le r$).
Note that 
$$\begin{pmatrix}1&1\\0&1\end{pmatrix}\begin{pmatrix}0&1\\1&1\end{pmatrix}
\begin{pmatrix}1&0\\1&1\end{pmatrix}\equiv I\equiv 
\begin{pmatrix}1&0\\1&1\end{pmatrix}\begin{pmatrix}1&1\\1&0\end{pmatrix}\begin{pmatrix}1&1\\0&1\end{pmatrix}
\ (2).$$
% Thus we can conjugate $J_1$ by an invertible matrix to assume that $J_1=I_{d'}\perp 2\begin{pmatrix}0&1\\1&0\end{pmatrix}^{\ell}.$
So adjusting $E'$, we can assume that $J_1\equiv I_{\ell}\perp\begin{pmatrix}0&1\\1&0\end{pmatrix}^m\ (2)$ for some $\ell,m\in\Z_{\ge 0}$.
We also have
$$\begin{pmatrix}1&1&1\\1&0&1\\1&1&0\end{pmatrix}
\begin{pmatrix}1&0&0\\0&0&1\\0&1&0\end{pmatrix}
\begin{pmatrix}1&1&1\\1&0&1\\1&1&0\end{pmatrix}
\equiv I\ (2).$$
Hence, further adjusting $E'$, we can assume that 
$$J_1\equiv I_{d'} \text{ or }\begin{pmatrix}0&1\\1&0\end{pmatrix}^{d'/2}\ (2)$$
where $J_1$ is $d'\times d'$.

Now (using Lemma 6.1 \cite{int wt}) choose $E\in SL_n(\Z)$ so that $E\equiv I\ (\stufe)$ and
$E\equiv E'\ (4)$.  Thus
$$\begin{pmatrix}I&0\\ M''&I\end{pmatrix}
=\begin{pmatrix}^tE^{-1}\\&E\end{pmatrix}
\begin{pmatrix}I&0\\M'&I\end{pmatrix}
\begin{pmatrix}^tE\\&E^{-1}\end{pmatrix}
\in \Gamma_{\infty}\gamma\Gamma_0(4\stufe)$$
with $M''\equiv M'\ (\stufe)$, $M''\equiv J_0\perp 2J_1\perp\big<0\big>^{n-d-d'}\ (4)$
where $J_0, J_1$ are symmetric and invertible modulo 2, and 
$J_1\equiv I_{d'} \text{ or }\begin{pmatrix}0&1\\1&0\end{pmatrix}^{d'/2}\ (2).$
Now take $\overline J_0\in\Z^{n,n}_{\sym}$ so that $J_0\overline J_0\equiv I\ (4).$
Take $\delta\in \Gamma_0(4\stufe)$ so that $\delta\equiv I\ (\stufe)$ and
$$\delta\equiv \begin{pmatrix}\overline J_0&&\overline J_0-I\\&I_{n-d}&&0\\
&&J_0\\&&&I_{n-d}\end{pmatrix}\ (4).$$
Set $\epsilon=+$ if $J_1\equiv I_{d'}\ (2)$, and set $\epsilon=-$ otherwise.
Then
$$\begin{pmatrix}I&0\\M''&I\end{pmatrix}\delta\equiv\begin{pmatrix}I&0\\M_{\sigma}&I\end{pmatrix}\ (4\stufe)$$
where $\sigma=((\stufe_0,\ldots,\stufe_n),(d,d',\epsilon))$.  Hence by Proposition 3.3  
\cite{int wt},
 $$\gamma\in\Gamma_{\infty}
\begin{pmatrix}I&0\\M_{\sigma}&I\end{pmatrix}\Gamma_0(4\stufe).$$
Thus 
$\cup_{\sigma}\Gamma_{\infty}\begin{pmatrix}I&0\\M_{\sigma}&I\end{pmatrix}=Sp_n(\Z).$

Now we want to show the above union is disjoint.
So suppose that $M_{\sigma}$ and $M_{\sigma'}$ are 
$\sigma$- and $\sigma'$-type (respectively) where
$\sigma((\stufe_0,\ldots,\stufe_n),(d,d',\epsilon))$ and $\sigma'=((\stufe_0',\ldots,\stufe_n'),(r,r',\epsilon'))$, and suppose that
$$\begin{pmatrix}I&0\\M_{\sigma'}&I\end{pmatrix}
\in\Gamma_{\infty}\begin{pmatrix}I&0\\M_{\sigma}&I\end{pmatrix}\Gamma_0(4\stufe).$$
Thus
$$\begin{pmatrix}I&0\\M_{\sigma'}&I\end{pmatrix}
=\begin{pmatrix}^tE^{-1}&*\\0&E\end{pmatrix}
\begin{pmatrix}I&0\\M_{\sigma}&I\end{pmatrix}
\begin{pmatrix}A&B\\C&D\end{pmatrix}$$
for some $E \in GL_n(\Z)$ and $\begin{pmatrix}A&B\\C&D\end{pmatrix}\in \Gamma_0(4\stufe)$.
So for all primes $q|4\stufe$, $\rank_qM_{\sigma'}=\rank_qM_{\sigma}$.
This means that $\stufe_i'=\stufe_i$ for $0\le i\le n$ and $r=d$.
Write
$$E=\begin{pmatrix}E_1&E_2\\E_3&E_4\end{pmatrix},\ 
A=\begin{pmatrix}A_1&A_2\\A_3&A_4\end{pmatrix}$$
where $E_1, A_1$ are $d\times d$.
We have
$$M_{\sigma}\equiv I_d\perp 2J\ (4)
\text{ and\ } M_{\sigma'}\equiv I_d\perp 2J'\ (4)$$
where $J=J_1\perp\big<0\big>^{n-d-d'}$, 
$J'=J_1'\perp\big<0\big>^{n-d-r'}$, 
$J_1$ is $d'\times d'$, $J_1'$ is $r'\times r'$, and $J_1, J_1'$ are invertible
modulo 2.  
We have $M_{\sigma'}\equiv EM_{\sigma}A\ (4)$, so
$E_1, A_1$ are invertible modulo 2, $E_3, A_2\equiv 0\ (2)$, $E_4, A_4$ are invertible modulo 2, and $J'\equiv E_4JA_4\ (2).$  Hence $\rank_2 J'=\rank_2 J$, meaning that $r'=d'$.
Writing $D=\begin{pmatrix}D_1&D_2\\D_3&D_4\end{pmatrix}$ where $D_1$ is $d\times d$, and knowing that
$E(M_{\sigma}B+D)=I$, we see that we must have $D_3\equiv 0\ (2)$ and $E_4D_4\equiv I\ (2).$  We also know that $A\,^tD\equiv I\ (4\stufe)$, so $A_4\,^tD_4\equiv I\ (2)$.  Thus $E_4\equiv\,^tA_4\ (2).$
Hence $J'\equiv\ ^tA_4JA_4\ (2).$
Now suppose that $d'\ge 1$ and write $A_4=\begin{pmatrix}\alpha_1&\alpha_2\\\alpha_3&\alpha_4\end{pmatrix}$ where $\alpha_1$ is $d'\times d'$.  So
$$\begin{pmatrix}J_1'\\&0\end{pmatrix}\equiv
\begin{pmatrix}^t\alpha_1J_1\alpha_1&^t\alpha_1J_1\alpha_2\\^t\alpha_2J_1\alpha_1&^t\alpha_2J_1\alpha_2\end{pmatrix}\ (2),$$
and hence $\alpha_1$ is invertible modulo 2, $\alpha_2\equiv0\ (2)$.  Thus over $\Z/2\Z$, $J_1$ and $J_1'$ represent the same quadratic form.  Noting that $I_{d'}$ has an anisotropic vector modulo 2 and for any $\ell\ge 1$,
$\begin{pmatrix}0&1\\1&0\end{pmatrix}^{\ell}$ does not, we must have $\epsilon'=\epsilon$.  Thus $\sigma'=\sigma$ whenever $\begin{pmatrix}I&0\\M_{\sigma'}&I\end{pmatrix}\in 
\Gamma_{\infty}\begin{pmatrix}I&0\\M_{\sigma}&I\end{pmatrix}\Gamma_0(4\stufe).$
\end{proof}

Next, for $\gamma\in\Gamma_0(4)$, we determine necessary conditions on $\chi$ to have $\E_{\gamma}\not=0$.  Then for certain $\gamma\not\in\Gamma_0(4)$, we show that $\E_{\gamma}=0$ regardless of choices for $\chi$ and $\varphi_{\gamma}(\tau).$

\begin{prop}  Suppose that $\stufe\in\Z_+$ is odd and square-free, and suppose that $\sigma
=((\stufe_0,\ldots,\stufe_n),(0,0,+))$ is an admissible type for level $4\stufe$.  Take $M_{\sigma}$ of $\sigma$-type, and set $\gamma=\begin{pmatrix}I&0\\M_{\sigma}&I\end{pmatrix}$.  Thus $\gamma\in\Gamma_0(4)$ and $\E_{\gamma}=0$ unless $\chi_q^2=1$ for all primes $q$ dividing $\stufe_1\cdots\stufe_{n-1}$.  (Note that we necessarily have $\chi_4^2=1.$)
\end{prop}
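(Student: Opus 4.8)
The plan is to invoke the vanishing criterion set up just before the statement. Since $\gamma=\begin{pmatrix}I&0\\M_\sigma&I\end{pmatrix}$ with $M_\sigma\equiv\big<0\big>^n\ (\modulo 4)$ lies in $\Gamma_0(4)$, Lemma 3.1 gives $\zeta_\gamma\equiv1$ on $\Gamma_\gamma$, so $\chi\zeta_\gamma^k=\chi$ there and $\E_\gamma=0$ unless the character $\alpha\mapsto\chi(\det D_\alpha)$ is trivial on $\Gamma_\gamma$. It therefore suffices to produce, for each prime $q\mid\stufe_1\cdots\stufe_{n-1}$ and each given square $u\in(\Z/q\Z)^\times$, an element $\alpha\in\Gamma_\gamma$ with $\det D_\alpha\equiv u\ (\modulo q)$ and $\det D_\alpha\equiv1\ (\modulo 4\stufe/q)$: triviality of $\chi$ on $\Gamma_\gamma$ then forces $\chi_q(u)=1$ for every square $u$, i.e. $\chi_q^2=1$. (The factor $\chi_4^2=1$ is automatic, as $(\Z/4\Z)^\times$ has order $2$.)

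The crux is an accurate description of $\Gamma_\gamma$, and in particular of the group $\Gamma_\infty\Gamma(4\stufe)$ occurring in its definition. The key point, easy to overlook, is that $\Gamma_\infty\Gamma(4\stufe)$ is strictly smaller than $\Gamma_0(4\stufe)$: reducing modulo $4\stufe$, the image of $\Gamma_\infty$ is the Siegel parabolic whose Levi block $\bar G$ satisfies $\det\bar G\equiv\pm1$, because the image of $GL_n(\Z)\to GL_n(\Z/4\stufe\Z)$ is exactly $\{\det\equiv\pm1\}$. So I would first prove
$$\Gamma_\infty\Gamma(4\stufe)=\{\alpha\in\Gamma_0(4\stufe):\ \det D_\alpha\equiv\pm1\ (\modulo 4\stufe)\},$$
using $\det A_\alpha\det D_\alpha\equiv1$ for $\alpha\in\Gamma_0(4\stufe)$. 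Since $\gamma\alpha\gamma^{-1}=\begin{pmatrix}A-BM_\sigma&B\\ M_\sigma A+C-M_\sigma BM_\sigma-DM_\sigma&M_\sigma B+D\end{pmatrix}$, the condition $\alpha\in\Gamma_\gamma$ is the pair of congruences modulo $4\stufe$: (i) $M_\sigma A+C-M_\sigma BM_\sigma-DM_\sigma\equiv0$, and (ii) $\det(M_\sigma B+D)\equiv\pm1$.

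I would then work one prime at a time. Fix $q\mid\stufe_s$ with $1\le s\le n-1$ and seek $\alpha\equiv I\ (\modulo 4\stufe/q)$, so that (i)--(ii) hold automatically away from $q$ and, since $4\mid 4\stufe/q$, the global sign in (ii) is forced to be $+$. Reducing modulo $q$ in $s,(n-s)$ blocks with $M_\sigma\equiv I_s\perp\big<0\big>^{n-s}$ and $C\equiv0$, condition (i) reads $A_2\equiv0$, $D_3\equiv0$, $A_1\equiv B_1+D_1$, so $A$ is block lower triangular and $D\equiv{}^tA^{-1}$; hence $M_\sigma B+D\equiv\begin{pmatrix}A_1&*\\0&{}^tA_4^{-1}\end{pmatrix}$, and (ii) becomes $\det A_1\equiv\det A_4$. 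Under these constraints $\det D_\alpha\equiv(\det A_1\det A_4)^{-1}$ is always a square modulo $q$, and (this is where $1\le s\le n-1$ enters) the block-diagonal choice $A=(\big<t\big>\perp I_{s-1})\perp(\big<t\big>\perp I_{n-s-1})$, $D={}^tA^{-1}$, $C=0$, $B=A-D$ gives a genuine parabolic symplectic datum realizing $\det D_\alpha\equiv t^{-2}$ for arbitrary $t$. Lifting such an $\alpha$ from $Sp_n(\Z/4\stufe\Z)$ to $Sp_n(\Z)$ (surjectivity of reduction) completes the construction, and $t^{-2}$ runs over all squares as $t$ runs over $(\Z/q\Z)^\times$.

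The main obstacle is exactly the identification of $\Gamma_\infty\Gamma(4\stufe)$ with the $\{\det D\equiv\pm1\}$ subgroup rather than all of $\Gamma_0(4\stufe)$: without the determinant restriction one would wrongly conclude $\chi_q=1$, and it is precisely this restriction that both weakens the conclusion to $\chi_q^2=1$ and explains why no condition arises for $q\mid\stufe_0\stufe_n$ (there one of the blocks $A_1,A_4$ is empty, forcing $\det A_1\equiv\det A_4$ to pin $\det D_\alpha\equiv1$). The remaining points, namely checking that the prescribed block data assembles into an honest symplectic matrix and that the isolation at $q$ is compatible with (ii), are routine.
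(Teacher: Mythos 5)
Your argument is correct and follows essentially the same route as the paper: both proofs use Lemma 3.1 to reduce to showing that $\chi$ must be trivial on $\Gamma_{\gamma}$, and both then construct, for each unit $t$ modulo $q$, an explicit element of $\Gamma_{\gamma}$ congruent to $I$ away from $q$ whose $D$-block has determinant a prescribed square $t^{\pm2}$ modulo $q$, exploiting $0<s<n$ to pair a unit inside the rank block of $M_{\sigma}$ with one outside it. The paper simply writes down such an element $\delta$ directly (together with an auxiliary $\beta\in\Gamma_{\infty}$ verifying $\beta\gamma\delta\gamma^{-1}\in\Gamma_{\infty}\Gamma(4\stufe)$) rather than first characterizing $\Gamma_{\infty}\Gamma(4\stufe)$ by the condition $\det D\equiv\pm1\ (\modulo 4\stufe)$, but the mechanism is identical.
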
  

\begin{proof}  Take $q$ to be a prime dividing $\stufe_1\cdots\stufe_{n-1}$.  Hence $M_{\sigma}\equiv\begin{pmatrix}I_d\\&0\end{pmatrix}\ (q)$ where $0<d<n$.  Let $u\in\Z$ be a unit modulo $q$, with $\overline u\in\Z$ so that $u\overline u\equiv1\ (q)$.  
By Lemma 6.1 \cite{int wt}, we can take
$E\in SL_n(\Z)$ and $\delta\in Sp_n(\Z)$ so that $E\equiv I\ (4\stufe/q)$, $E\equiv\begin{pmatrix}u\\&I\\&&\overline u\end{pmatrix}\ (q)$, $\delta\equiv I\ (4\stufe/q)$,
$$\delta\equiv\begin{pmatrix}\overline u&&&w\\&I_{n-2}&&&0\\&&\overline u&&&0\\&&&u\\&&&&I_{n-2}\\&&&&&u\end{pmatrix}
\ (q)$$ where $w=\overline u-u$.  Thus $\delta\in\Gamma_0(4\stufe)$, $\beta=\begin{pmatrix}^tE^{-1}\\&E\end{pmatrix}\in\Gamma_{\infty}$, and
$\beta\gamma\delta\gamma^{-1}\in\Gamma_{\infty}\Gamma(4\stufe)$.  Hence $\delta\in\Gamma_{\gamma}$.  Since $\gamma\in\Gamma_0(4)$, we have that $\zeta_{\gamma}(\delta)=1$.  So by Lemma 3.1, if $\E_{\gamma}\not=0$ then $1=\chi(\delta)=\chi_q^2(u)$.  This argument holds for all units modulo $q$, and for all primes $q$ dividing $\stufe_1\cdots\stufe_{n-1}$, proving the proposition.
\end{proof}

\begin{prop}
Suppose $\stufe\in\Z_+$ is odd and square-free, and $\sigma=((\stufe_0,\ldots,\stufe_n),(d,d',+))$ is an admissible type for level $4\stufe$ with $d'>0$.
Then with $\chi$ a character modulo $4\stufe$, $M_{\sigma}$ of $\sigma$-type, and $\gamma=\begin{pmatrix}1&0\\M_{\sigma}&1\end{pmatrix}$, we have $\E_{\gamma}=0,$
regardless of the choice of automorphy factor for $\gamma$.
\end{prop}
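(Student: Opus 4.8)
The plan is to invoke the criterion established just before Proposition 3.2: the series $\E_\gamma$ vanishes unless the homomorphism $\chi\zeta_\gamma^k\colon\Gamma_\gamma\to\C^\times$ is trivial, i.e. unless $\Gamma'_{\gamma,\chi}=\Gamma_\gamma$. So it suffices to exhibit a single $\delta\in\Gamma_\gamma$ with $\chi\zeta_\gamma^k(\delta)\neq1$, and I would arrange this by producing $\delta$ with $\zeta_\gamma(\delta)^2=-1$ and with $\chi(\delta)$ real. Indeed, $\zeta_\gamma(\delta)^2=-1$ forces $\zeta_\gamma(\delta)=\pm i$ by Lemma 3.1, whence $\zeta_\gamma(\delta)^k=\pm i$ since $k$ is odd; as $\chi(\delta)\in\{\pm1\}$ is real, the product $\chi\zeta_\gamma^k(\delta)=\pm i\,\chi(\delta)$ is non-real, and in particular $\neq1$. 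Two features make this robust. First, by the identity $\zeta_\gamma(\delta)^2=w(\delta)\,w(\gamma\delta^{-1}\gamma^{-1})$ from the proof of Lemma 3.1, the quantity $\zeta_\gamma(\delta)^2$ is computed entirely from the two elements $\delta,\gamma\delta^{-1}\gamma^{-1}\in\Gamma_0(4)$ through their theta-quotient automorphy factors, and hence does not depend on the chosen $\varphi_\gamma$; this is exactly what delivers the conclusion ``regardless of the choice of automorphy factor.'' Second, I have freedom on the $\chi$-side: choosing $\delta$ with $\det D_\delta\equiv1\ (\modulo \stufe)$ forces $\chi(\delta)=\chi_4(\det D_\delta)\in\{\pm1\}$ to be real at no cost.

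Next I would construct $\delta$, exploiting the hypothesis $d'>0$. Since $\epsilon=+$, we have $M_\sigma\equiv I_d\perp 2I_{d'}\perp\big<0\big>^{n-d-d'}\ (\modulo 4)$; I fix an index $j$ in the middle block, so that on the $(j,j)$ symplectic coordinate plane $\gamma$ reduces mod $4$ to $\begin{pmatrix}1&0\\2&1\end{pmatrix}$. Using Lemma 6.1 of \cite{int wt} I would choose $\delta\in\Gamma_0(4\stufe)$ with $\delta\equiv I\ (\modulo \stufe)$ and with $\delta$ supported mod $4$ on this plane by a matrix realizing, in the degree-one picture, $\begin{pmatrix}1+2\stufe&\stufe\\-4\stufe&1-2\stufe\end{pmatrix}$, padded by the identity in all remaining coordinates. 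A direct check of the congruences defining $\Gamma_\infty\Gamma(4\stufe)$ then shows $\gamma\delta\gamma^{-1}\in\Gamma_\infty\Gamma(4\stufe)$, so $\delta\in\Gamma_\gamma$; in the degree-one model $\gamma\delta\gamma^{-1}$ in fact lands in $\Gamma_\infty$ outright, so that $w(\gamma\delta^{-1}\gamma^{-1})=1$. The decisive point of this $\delta$ is that $\det D_\delta\equiv1\ (\modulo \stufe)$ while $\det D_\delta\equiv3\ (\modulo 4)$ (the ``$2$'' forces the sign), whereas $\det D_{\gamma\delta^{-1}\gamma^{-1}}\equiv1\ (\modulo 4)$.

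Finally I would evaluate the phase. By Proposition 2.1, for any $\alpha\in\Gamma_0(4)$ one has $w(\alpha)=\overline\G_{C_\alpha}(D_\alpha)^2/\det D_\alpha$, where $\det D_\alpha$ is odd; by Proposition 2.2 together with the elementary evaluation of the resulting quadratic Gauss sums, this equals $+1$ or $-1$ according as $\det D_\alpha\equiv1$ or $3\ (\modulo 4)$. Applying this to $\alpha=\delta$ (giving $-1$) and to $\alpha=\gamma\delta^{-1}\gamma^{-1}$ (giving $+1$) yields $\zeta_\gamma(\delta)^2=-1$, which by the first paragraph finishes the proof. The main obstacle, and the real content, is the middle step: verifying in degree $n$ that conjugation by $\gamma$ flips the residue of $\det D$ modulo $4$ precisely because of the block $2I_{d'}$, and that the off-diagonal entries of $M_\sigma$ together with the padding in the remaining $n-2$ coordinates contribute trivially both to the membership $\delta\in\Gamma_\gamma$ and to the Gauss-sum phase. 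This localization to the $j$-th plane, carried out through the congruence machinery of \cite{int wt}, is where the hypothesis $d'>0$ is indispensable: when $d'=0$ we have $\gamma\in\Gamma_0(4)$ and $\zeta_\gamma\equiv1$ by Lemma 3.1, so no such phase is available.
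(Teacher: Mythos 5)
Your argument is correct and is essentially the paper's own proof: the same vanishing criterion ($\E_{\gamma}=0$ unless $\Gamma'_{\gamma,\chi}=\Gamma_{\gamma}$), the same element of $\Gamma_{\gamma}$ supported on one symplectic coordinate plane of the $2I_{d'}$ block (your degree-one matrix $\begin{pmatrix}1+2\stufe&\stufe\\-4\stufe&1-2\stufe\end{pmatrix}$ is exactly the paper's $\alpha$ with $m=2$), and the same evaluation $\zeta_{\gamma}(\delta)^2=(2\stufe-1)/(1-2\stufe)=-1$ via the quadratic Gauss sum together with $\chi(\delta)=\pm1$, giving $\chi\zeta_{\gamma}^k(\delta)=\pm i\not=1$. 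The only difference is presentational: the paper first reduces $M_{\sigma}$ to diagonal form and writes down an integral $\alpha$ fixing $(M_{\sigma}\ I)$ exactly, rather than pinning $\delta$ down by congruences mod $4\stufe$.
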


\begin{proof}  Fix a choice of character $\chi$ modulo $4\stufe$ and an automorphy factor $\varphi_{\gamma}(\tau)$ for $\gamma$.
We have $M_{\sigma}\equiv\big<m_1,\ldots,m_n\big>\ (4\stufe)$ where 
$$m_{d+1}\equiv
\begin{cases} 0\ (\stufe_0\cdots\stufe_d),\\
1\ (\stufe_{d+1}\cdots\stufe_n),\\
2\ (4).
\end{cases}$$
%$m_{\ell}\equiv1\ (\stufe_{\ell}\cdots\stufe_n)$,
%$m_{\ell}\equiv0\ (\stufe_0\cdots\stufe_{\ell-1}),$ and
%$$m_{\ell}\equiv\begin{cases} 1\ (4)&\text{if $\ell\le d$,}\\2\ (4)&\text{if $d<\ell\le %d+d'$,}\\0\ (4)&\text{otherwise.}\end{cases}$$
So (using Proposition 3.3) we can assume that
$M_{\sigma}=\big<m_1,\ldots,m_n\big>.$

Now set $m=m_{d+1},$
\begin{align*}
A&=\begin{pmatrix}I_d\\&1+2\stufe\\&&I_{n-d-1}\end{pmatrix},\ 
B=\begin{pmatrix}0_d\\&2\stufe/m\\&&0_{n-d-1}\end{pmatrix},\\
C&=\begin{pmatrix}0_d\\&-2\stufe m\\&&0_{n-d-1}\end{pmatrix},\ 
D=\begin{pmatrix}I_d\\&1-2\stufe\\&&I_{n-d-1}\end{pmatrix}.
\end{align*}
One easily checks that with 
$$\alpha=\begin{pmatrix}A&B\\C&D\end{pmatrix},$$
we have $(M_{\sigma}\ I)\alpha=(M_{\sigma}\ I)$ and consequently $\alpha\in\Gamma_{\gamma}$.  As we saw in our construction of Eisenstein series, we have $\E_{\gamma}=0$ unless $\Gamma'_{\gamma,\chi}=\Gamma_{\gamma}$.
Then as in the proof of Lemma 3.1, we have
$$(\zeta_{\gamma}(\alpha))^2=\frac{(\overline\G_{-2\stufe m}(1-2\stufe))^2}{1-2\stufe}
=\frac{(\overline\G_{2\stufe m}(2\stufe-1))^2}{1-2\stufe}.$$
Since $\stufe$ is odd, we have $2\stufe-1\equiv 1\ (4)$ and hence
$(\overline\G_{2\stufe m}(2\stufe-1))^2=2\stufe-1$.
Therefore $(\zeta_{\gamma}(\alpha))^2=-1$ and so $\zeta_{\gamma}(\alpha)=\pm i$.
We also have 
$\chi(\alpha)=\chi(1-2\stufe)=\chi_4(-1)=\pm 1.$
Consequently, $\chi\zeta_{\gamma}^k(\alpha)=\pm i$.  Hence $\Gamma'_{\gamma,\chi}\not=\Gamma_{\gamma}$, and so $\E_{\gamma}=0$.
\end{proof}

Using elements of the above proof, we prove the following.

\begin{cor}  Let $\widetilde\Gamma_0(4)=\left\{\widetilde\gamma=[\gamma,\theta(\gamma\tau)/\theta(\tau)]:\ \gamma\in\Gamma_0(4)\ \right\}.$
Suppose that $\Gamma$ is a subgroup of $Sp_n(\Z)$ containing $\Gamma_0(4)$, and that $\widetilde\Gamma$ is a cover of $\Gamma$, meaning that $\widetilde\Gamma$ is a group whose elements are of the form
$[\gamma,\varphi_{\gamma}(\tau)]$, $\gamma\in\Gamma$ and $\varphi_{\gamma}(\tau)$ an automorphy factor for $\gamma$. 
Suppose further that $\widetilde\Gamma_0(4)\subseteq\widetilde\Gamma$ and that
$\begin{pmatrix}I&0\\M&I\end{pmatrix}\in\Gamma$ with
$M\equiv I_d\perp 2I_{d'}\perp\big<0\big>^{n-d-d'}\ (4)$ where $d+d'>0$,
or $M\equiv I_d\perp \begin{pmatrix}0&2\\2&0\end{pmatrix}^{d'/2}\perp\big<0\big>^{n-d-d'}\ (4)$ where $d>0$.
Then $\widetilde\Gamma$ is not a one-fold cover of $\Gamma$.  In particular, when $n=1$, there is no group $\Gamma$ so that $\Gamma_0(4)\subsetneq\Gamma\subseteq SL_2(\Z)$ and $\widetilde\Gamma$ is a one-fold cover of $\Gamma$.
\end{cor}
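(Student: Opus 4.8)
The plan is to translate \emph{one-fold cover} into a statement about the phase $w$ from the proof of Lemma 3.1, and then to produce relations among $\gamma$ and elements of $\Gamma_0(4)$ that this phase cannot satisfy. Recall that for $\delta\in Sp_n(\Z)$ with automorphy factor $\varphi_\delta$ one has $\psi_\delta(\tau)=\det(C_\delta\tau+D_\delta)$ and $w(\delta)=\varphi_\delta(\tau)^2/\psi_\delta(\tau)$, a constant of modulus $1$, with $w(\delta)=\overline\G_{C_\delta}(D_\delta)^2/\det D_\delta$ when $\delta\in\Gamma_0(4)$ by the Transformation Formula. If $\widetilde\Gamma$ were a one-fold cover with $\widetilde\Gamma_0(4)\subseteq\widetilde\Gamma$, then $\delta\mapsto\widetilde\delta$ is a homomorphism, and since $\varphi_{\delta\delta'}=\varphi_\delta(\delta'\tau)\varphi_{\delta'}(\tau)$ the induced map $w\colon\Gamma\to\C^\times$ is a homomorphism agreeing with $\overline\G_C(D)^2/\det D$ on $\Gamma_0(4)$. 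The whole strategy is to show no such homomorphism exists once $\gamma\in\Gamma$. As a preliminary I would reduce $M$ to normal form: since $\Gamma_\infty\subseteq\Gamma_0(4)\subseteq\Gamma$, conjugating $\gamma$ by $\begin{pmatrix}G&0\\0&{}^tG^{-1}\end{pmatrix}\in\Gamma_\infty$ replaces $M$ by ${}^tG^{-1}MG^{-1}$, so by the congruence-reduction of Proposition 3.4 I may assume $M=I_d\perp 2I_{d'}\perp\langle0\rangle^{n-d-d'}$ (resp.\ its $\epsilon=-$ form) exactly.

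For the diagonal-$2$ situation ($\epsilon=+$, $d'>0$) I would simply reuse Proposition 3.6: its $\alpha\in\Gamma_\gamma$ has $\alpha,\gamma\alpha\gamma^{-1}\in\Gamma_0(4)$ and $\zeta_\gamma(\alpha)^2=w(\alpha)/w(\gamma\alpha\gamma^{-1})=-1$, so $w(\gamma\alpha\gamma^{-1})=-w(\alpha)$. A homomorphism into an abelian group forces $w(\gamma\alpha\gamma^{-1})=w(\alpha)$, a contradiction (equivalently $\widetilde\gamma\widetilde\alpha\widetilde\gamma^{-1}\widetilde{(\gamma\alpha\gamma^{-1})}^{-1}=[I,\zeta_\gamma(\alpha)]$ with $\zeta_\gamma(\alpha)=\pm i\neq1$ already sits over $I$). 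This covers every $\epsilon=+$ case with $d'>0$, leaving only the genuinely new diagonal-$1$ cases.

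The diagonal-$1$ case ($\epsilon=+$, $d'=0$, $d>0$) is where the conjugation test is silent: one checks as in Proposition 3.6 that $\zeta_\gamma(\alpha)^2=1$ for all admissible $\alpha$, so the obstruction is a relation, not a commutator. Here I would build an ``$S$ in the $1$-type coordinates'' element: with $T_{[d]}=\begin{pmatrix}I&Y\\0&I\end{pmatrix}\in\Gamma_\infty$, $Y=I_d\perp\langle0\rangle^{n-d}$, set $\Sigma=-T_{[d]}\gamma^{-1}T_{[d]}\in\Gamma$; a direct check gives $\Sigma=\begin{pmatrix}0&-1\\1&0\end{pmatrix}$ in each $1$-type coordinate and $-I$ in each $0$-type coordinate. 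The $SL_2$ relations $S^2=-I$ and $(ST)^3=-I$ then yield the group relations $(\gamma T_{[d]}^{-1})^3=-I_{[d]}$ and $(\Sigma T_{[d]})^3=-I$, whose members lie in $\Gamma_0(4)$. Reading off $w(-I_{[d]})=(-1)^d$, $w(-I)=(-1)^n$ and $w(T_{[d]})=1$, and using $w(\Sigma)=(-1)^n w(\gamma)^{-1}$, these force $w(\gamma)^3=(-1)^d$ from the first and $w(\gamma)^3=1$ from the second. For $d$ odd these are incompatible, so $w$ cannot extend; for $n=1$ this is exactly the classical incompatibility of $S^2=-I$ and $(ST)^3=-I$.

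The step I expect to be the main obstacle is the remaining diagonal-$1$ subcase with $d$ \emph{even}, together with the $\epsilon=-$ form (where the off-diagonal $2$-blocks prevent the order-$6$ relation from closing up in $\Gamma_0(4)$ until they are first cancelled). There the displayed relations become compatible at the level of the squared phase $w$, so the contradiction survives only in the \emph{un-squared} automorphy factors, i.e.\ in the actual branch of $S_{C,D}(\tau)=\det(C\tau+D)^{1/2}$ transported around the relation. Extracting it means evaluating $\prod_{l}\varphi_\Sigma(\Sigma^l\tau)$ against the theta multiplier $\theta(\delta\tau)/\theta(\tau)$ of the resulting $\Gamma_0(4)$-element, tracking the $\pm$ sign via the identities of Proposition 2.2 and the classical evaluation $\G_c(d)=\varepsilon_d\left(\frac cd\right)\sqrt d$ with $\varepsilon_d\in\{1,i\}$; this is precisely the half-integral-weight anomaly forbidding $\theta$ from being modular on anything larger than $\Gamma_0(4)$. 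Granting this, the ``in particular'' statement for $n=1$ is immediate: by Proposition 3.4 any $\Gamma$ with $\Gamma_0(4)\subsetneq\Gamma\subseteq SL_2(\Z)$ contains some $\begin{pmatrix}1&0\\M_\sigma&1\end{pmatrix}$ with $d+d'>0$, and for $n=1$ the only options are $M_\sigma=1$ or $M_\sigma=2$, both handled above.
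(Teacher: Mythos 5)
There is a genuine gap. Your treatment of the case $M\equiv I_d\perp 2I_{d'}\perp\big<0\big>^{n-d-d'}\ (4)$ with $d'>0$ is essentially the paper's core mechanism (conjugation by $\gamma$ of the element $\alpha$ from Proposition 3.6 produces $\zeta_{\gamma}(\alpha)=\pm i$, hence two distinct elements of $\widetilde\Gamma$ over $\gamma\alpha\gamma^{-1}$), but you miss the one-line observation that makes every remaining case collapse into that one: since $\begin{pmatrix}I&0\\M&I\end{pmatrix}^2=\begin{pmatrix}I&0\\2M&I\end{pmatrix}$ also lies in $\Gamma$, and $2M\equiv 2I_d\perp\big<0\big>^{n-d}\ (4)$ for the first form with $d>0$ (the $4I_{d'}$ block vanishes mod $4$) and likewise $2M\equiv 2I_d\perp\big<0\big>^{n-d}\ (4)$ for the $\epsilon=-$ form with $d>0$, the group $\Gamma$ always contains some $\begin{pmatrix}I&0\\M'&I\end{pmatrix}$ with $M'\equiv 2I_r\perp\big<0\big>^{n-r}\ (4)$, $r>0$, to which the Proposition 3.6 computation applies directly. (This is also why the corollary's hypothesis demands $d>0$ in the $\epsilon=-$ case, and why the Remark singles out the purely hyperbolic-mod-$4$ matrices as the ones left open.)

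Because you do not see this reduction, you are forced into the ``diagonal-$1$'' analysis via $SL_2$-type relations, and there your argument is incomplete in exactly the cases you flag yourself: for $d$ even the two relations give the compatible conclusions $w(\gamma)^3=1$, so no contradiction is extracted, and for the $\epsilon=-$ form you never close the relation inside $\Gamma_0(4)$ at all. The phrase ``Granting this'' is doing the real work, and what is being granted is precisely the hardest-looking part of the statement --- which the paper avoids entirely by squaring. A secondary issue: your normal-form reduction via conjugation by $\Gamma_\infty$ only controls $M$ modulo $4$ up to ${}^tGMG$, so asserting $M$ equals the normal form ``exactly'' needs more care (the paper never needs exact equality, only the congruence class mod $4$ of $2M$). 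The $n=1$ deduction at the end is fine once the general statement is in hand.
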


\begin{proof}
With $M$ as above, we have 
$$\begin{pmatrix}I&0\\M&I\end{pmatrix},\begin{pmatrix}I&0\\M&I\end{pmatrix}^2\in\Gamma;$$ 
consequently $\Gamma$ contains a matrix $\gamma=\begin{pmatrix}I&0\\M'&I\end{pmatrix}$ where $M'\equiv 2I_{r}\perp\big<0\big>^{n-r}\ (4)$ for some $r>0$.
Take an automorphy factor $\varphi_{\gamma}(\tau)$ for $\gamma$ so that $[\gamma,\varphi_{\gamma}(\tau)]\in\widetilde\Gamma$.  Then since $\widetilde\Gamma$ is a group with identity $[I,1]$, we have
$$\widetilde\gamma^{-1}=[\gamma^{-1},1/\varphi_{\gamma}(\gamma^{-1}\tau)]\in
\widetilde\Gamma.$$
Then from the proof of Proposition 3.7, there is some $\alpha\in\Gamma_{\gamma}\subseteq\Gamma_0(4)$ so that
$$\widetilde\gamma\widetilde\alpha\widetilde\gamma^{-1}
=[I,\zeta_{\gamma}(\alpha)]\widetilde{(\gamma\alpha\gamma^{-1})}
\not=\widetilde{\gamma\alpha\gamma^{-1}}.$$
Thus there are (at least) two distinct elements in $\widetilde\Gamma$ of the form
$[\gamma\alpha\gamma^{-1},*]$.  Hence $\widetilde\Gamma$ is not a one-fold cover of $\Gamma$.
\end{proof}

\smallskip\noindent
{\bf Remark.} It would be interesting to know whether there is a group $\Gamma$ with $\Gamma_0(4)\subsetneq\Gamma\subseteq Sp_n(\Z)$ so that $\widetilde\Gamma$ is a one-fold cover of $\Gamma$.  Note that by the above result and Proposition 3.5, such a group $\Gamma$ would necessarily have an element of the form
$$\begin{pmatrix}I&0\\M'&I\end{pmatrix}$$
where $M'\equiv \begin{pmatrix}0&2\\2&0\end{pmatrix}^r\perp\big<0\big>^{n-r/2}\ (4)$
where $r>0$.

\bigskip
\section{The action of Hecke operators attached to odd primes where the level  is $4\stufe$ with $\stufe$ odd and square-free}

Throughout, we fix $n,k,\stufe\in\Z_+$ with $k$ odd, $n>(k+1)/2$, $\stufe$ odd and square-free, and $\chi$ a character modulo $4\stufe$.

Here we look at the action on Siegel Eisenstein series of Hecke operators.  However, because of the constraints reflected in Proposition 2.2, we are only able to do this satisfactorily for Hecke operators attached to odd primes, and for Eisenstein series supported on the $\Gamma_0(4\stufe)$-orbit of a matrix $\gamma\in\Gamma_0(4).$  (In the process of our evaluation, we point out where we find these restrictions necessary.)  We restrict our attention to level $4\stufe$ with $\stufe$ odd and square-free so that we can evaluate the action of the Hecke operators attached to primes dividing $\stufe$.  

In Proposition 3.4 we presented a set of representatives for the double quotient $\Gamma_{\infty}\backslash Sp_n(\Z)/\Gamma_0(4\stufe)$.  From that we can see that a representatives for $\Gamma_{\infty}\backslash\Gamma_0(4)/\Gamma_0(4\stufe)$ are associated to admissible types $((\stufe_0,\ldots,\stufe_n),(0,0,+))$ where $(\stufe_0,\ldots,\stufe_n)$ is a (multiplicative) partition of $\stufe$.  Hence to ease our notation, for $\sigma=(\stufe_0,\ldots,\stufe_n)$ a partition of $\stufe$, we fix a diagonal matrix $M_{\sigma}\in4\Z^{n,n}$ with
$M_{\sigma}\equiv I_s\perp\big<0\big>^{n-s}\ (\stufe_s)$ for all $s$, $0\le s\le n$.
Then we set $\gamma_{\sigma}=\begin{pmatrix}I&0\\M_{\sigma}&I\end{pmatrix}$, and we often write $\E_{\sigma}$ for $\E_{\gamma_{\sigma}}$.
Note that with such $\sigma$ and $M_{\sigma}$, we have that 
$$GL_n(\Z)(M_{\sigma}\ I)\Gamma_0(4\stufe)=SL_n(\Z)(M_{\sigma}\ I)\Gamma_0(4\stufe),$$  so we can write $\E_{\sigma}$ as a sum over representatives 
$$SL_n(\Z)(M\ N)\in SL_n(\Z)(M_{\sigma}\ I)\Gamma_0(4\stufe).$$
This observation is useful in allowing us to more easily adapt results from \cite{int wt}, as with the following result.

\begin{prop}
Fix a multiplicative partition $\sigma$ of $\stufe$, and
suppose $\E_{\gamma_{\sigma}}\not=0.$
Take $(M\ N)\in SL_n(\Z)(M_{\sigma}\ I)\gamma$ where
$\gamma\in\Gamma_0(\stufe)$, and fix a prime $q|\stufe$.
There are $E_0,E_1\in SL_n(\Z)$ so that
$$E_0ME_1\equiv\begin{pmatrix} M_1&0\\0&0\end{pmatrix}\ (q)$$ 
with $M_1$ invertible modulo $q$; 
for any such $E_0,E_1$ we have
$$E_0N\,^tE_1^{-1}\equiv\begin{pmatrix} N_1&N_2\\0&N_4\end{pmatrix}\ (q)$$ 
and 
$\chi_q(\gamma)=\chi_q(\det\overline M_1\cdot\det N_4).$
Also, $\chi_4(M,N)=\chi_4(\det N)$.
Further, for any $G\in GL_n(\Z)$, we have
$$\chi(GM,GN)=\chi(\det G)\chi(M,N)=\chi(MG,N\,^tG^{-1}).$$
\end{prop}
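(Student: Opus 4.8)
The plan is to treat the two equalities separately, exploiting that the right multiplication $(M,N)\mapsto(MG,N\,^tG^{-1})$ is a genuine symplectic operation whereas the left multiplication $(M,N)\mapsto(GM,GN)$ is not, and to keep in mind that $\det G=\pm1$ throughout. Recall that $\chi(M,N)=\chi(\det D_\gamma)$ whenever $(M\ N)=E(M_\sigma\ I)\gamma$ with $E\in SL_n(\Z)$ and $\gamma=\begin{pmatrix}A&B\\C&D\end{pmatrix}\in\Gamma_0(4\stufe)$; the local formulas proved above express this quantity intrinsically in $(M,N)$, so it is well-defined. I would also record the elementary fact that $\chi(EM',EN')=\chi(M',N')$ for any $E\in SL_n(\Z)$: if $(M'\ N')=E'(M_\sigma\ I)\gamma$ then $(EM'\ EN')=(EE')(M_\sigma\ I)\gamma$ with $EE'\in SL_n(\Z)$ and the same $\gamma$.

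For the second equality I would observe that $\beta_G=\begin{pmatrix}G&0\\0&\,^tG^{-1}\end{pmatrix}$ lies in $Sp_n(\Z)$ (the symplectic relations are immediate, as $G\,^t(\,^tG^{-1})=I$) and in fact in $\Gamma_0(4\stufe)$, since its lower-left block is $0$. Because $(MG\ N\,^tG^{-1})=(M\ N)\beta_G$, writing $(M\ N)=E(M_\sigma\ I)\gamma$ gives $(MG\ N\,^tG^{-1})=E(M_\sigma\ I)(\gamma\beta_G)$ with $\gamma\beta_G\in\Gamma_0(4\stufe)$ and $E\in SL_n(\Z)$. A direct multiplication yields $D_{\gamma\beta_G}=D_\gamma\,^tG^{-1}$, so $\det D_{\gamma\beta_G}=\det D_\gamma\cdot(\det G)^{-1}=\det D_\gamma\cdot\det G$ (using $\det G=\pm1$). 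Hence $\chi(MG,N\,^tG^{-1})=\chi(\det G)\chi(\det D_\gamma)=\chi(\det G)\chi(M,N)$, which is the second equality.

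For the first equality the left multiplication cannot be pushed into $\gamma$, so I would argue through the intrinsic local formulas. The case $\det G=1$ is immediate from the $SL_n(\Z)$-invariance above (and there $\chi(\det G)=1$). When $\det G=-1$ I would write $G=EW$ with $W=\diag(-1,1,\dots,1)$ and $E=GW\in SL_n(\Z)$, so that by $SL_n(\Z)$-invariance it suffices to show $\chi(WM,WN)=\chi(-1)\chi(M,N)$. First I would note that $\chi(WM,WN)$ is even defined, since $W(M\ N)\in GL_n(\Z)(M_\sigma\ I)\Gamma_0(4\stufe)=SL_n(\Z)(M_\sigma\ I)\Gamma_0(4\stufe)$ by the identity recorded at the start of this section. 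Then I would compute the effect of $W$ factor by factor. At $4$, $\chi_4(WM,WN)=\chi_4(\det WN)=\chi_4(-1)\chi_4(\det N)=\chi_4(-1)\chi_4(M,N)$. For a prime $q\mid\stufe$, if $E_0ME_1\equiv\begin{pmatrix}M_1&0\\0&0\end{pmatrix}\ (q)$, then $E_0'=WE_0W\in SL_n(\Z)$ and $E_1'=E_1$ reduce $WM$, because $E_0'(WM)E_1=WE_0ME_1$ and likewise $E_0'(WN)\,^tE_1^{-1}=WE_0N\,^tE_1^{-1}$; thus the reduced pair for $(WM,WN)$ is that for $(M,N)$ with its first row negated. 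This negation multiplies exactly one of $\det M_1$ or $\det N_4$ by $-1$ (according as $\rank_qM\ge1$ or $\rank_qM=0$), so in either case $\chi_q(WM,WN)=\chi_q(-1)\chi_q(M,N)$. Assembling the factor at $4$ and the factors at the $q\mid\stufe$ gives $\chi(WM,WN)=\chi(-1)\chi(M,N)=\chi(\det W)\chi(M,N)$, which is the first equality.

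The main obstacle is the first equality: left multiplication by $G$ alters the determinant of the $GL_n(\Z)$-coset representative, and this cannot be absorbed into the symplectic factor $\gamma$, so one is forced to track the sign through the explicit block reductions and to separate the $\rank_qM=0$ and $\rank_qM\ge1$ cases. The reduction to $W=\diag(-1,1,\dots,1)$ together with the identity $GL_n(\Z)(M_\sigma\ I)\Gamma_0(4\stufe)=SL_n(\Z)(M_\sigma\ I)\Gamma_0(4\stufe)$ are precisely what keep this step manageable.
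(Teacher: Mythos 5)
Your argument for the final identity $\chi(GM,GN)=\chi(\det G)\chi(M,N)=\chi(MG,N\,^tG^{-1})$ is correct as far as it goes: the factorization $(MG\ \ N\,^tG^{-1})=(M\ N)\beta_G$ with $\beta_G=\begin{pmatrix}G&0\\0&\,^tG^{-1}\end{pmatrix}\in\Gamma_0(4\stufe)$ handles the right action cleanly, and the reduction to $W=\diag(-1,1,\ldots,1)$ with sign-tracking through the block reductions does give the left action. A shortcut for the latter that avoids all case analysis: since $M_{\sigma}$ is diagonal it commutes with $W$, so writing $(M\ N)=E(M_{\sigma}\ I)\gamma$ with $E\in SL_n(\Z)$ gives $(GM\ \ GN)=(GEW)(WM_{\sigma}\ \ W)\gamma=(GEW)(M_{\sigma}\ I)\beta_W\gamma$ with $GEW\in SL_n(\Z)$ and $\beta_W=\diag(W,W)\in\Gamma_0(4\stufe)$, whence $\chi(GM,GN)=\chi(\det(WD_{\gamma}))=\chi(\det G)\chi(M,N)$ directly.

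The genuine gap is that the proposal proves only this last assertion. The first four claims --- the existence of $E_0,E_1$ reducing $M$ to $\begin{pmatrix}M_1&0\\0&0\end{pmatrix}$ modulo $q$, the resulting block-triangular shape of $E_0N\,^tE_1^{-1}$ (which rests on the coprimality and symmetry of the pair), the local formula $\chi_q(\gamma)=\chi_q(\det\overline M_1\cdot\det N_4)$, and $\chi_4(M,N)=\chi_4(\det N)$ --- are nowhere established, and your proof of the left-multiplication identity explicitly takes them as inputs (``the local formulas proved above''). These are the substance of the proposition. The paper's own proof obtains all of them by quoting Proposition 3.7 of the integral-weight paper (proved there for square-free level), and the single genuinely new point here is the factor at $2$: since $M_{\sigma}\equiv0\ (\modulo 4)$, one has $N\equiv ED_{\gamma}\ (\modulo 4)$ with $E\in SL_n(\Z)$, so $\chi_4(M,N)=\chi_4(\det D_{\gamma})=\chi_4(\det N)$. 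That one-line verification is precisely the piece a proof of this proposition most needs to supply, and it is missing from your write-up.
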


\begin{proof}  Essentially, this is Proposition 3.7 from \cite{int wt}, although for that proposition the level is $\stufe$ with $\stufe$ square-free.  So here the difference is that our level is $4\stufe$ with $\stufe$ odd and square-free.  Hence for $\gamma\in\Gamma_0(4\stufe)$ so that
$(M\ N)\in GL_n(\Z)(M_{\sigma}\ I)\gamma$, we have $\chi_4(\gamma)=\chi_4(\det N)$.
\end{proof}

We begin by evaluating $\E_{\sigma}|T_j(q^2)$ ($1\le j\le n$) for $\sigma$ a partition of $\stufe$ and $q$ a prime dividing $\stufe$.  We show that 
the span of these $\E_{\sigma}$ has a basis 
$$\{\widetilde\E_{\sigma}:\ \sigma \text{ a partition of }\stufe\ \}$$
so that each $\widetilde\E_{\sigma}$ is a simultaneous eigenform for
$\{T_n(q^2):\ \text{prime }q|\stufe\ \}.$  Further, we show that for $\sigma,\rho$ distinct partitions of $\stufe$, there is some prime $q|\stufe$ so that the $T_n(q^2)$-eigenvalues of $\widetilde\E_{\sigma}$ and $\widetilde\E_{\rho}$ differ.  Then, since the Hecke operators commute, we can show that each $\widetilde\E_{\sigma}$ is an eigenform for all Hecke operators associated to odd primes, and we explicitly compute the eigenvalues of all $T_j(p^2)$ where $1\le j\le n$ and $p$ an odd primes.

From Proposition 1.3 and Theorem 2.3 of \cite{half-int}, and Proposition 2.1 of \cite{int wt}, we have the following.
Note that here we have  normalized the Hecke operator presented in \cite{half-int}; also, we have simplified a Gauss sum that appeared there.

\begin{prop}  Suppose
$F$ is a modular form of degree $n$, weight $k/2$, level $4\stufe$ and character $\chi$.
\begin{enumerate}
\item[(a)]  For $q$ a prime dividing $4\stufe$ and $1\le j\le n$,we have
$$F|T_j(q^2)=q^{j(k/2-n-1)}\sum_{G,Y}
F|\left[\begin{pmatrix}X_j^{-1}G^{-1}&X_j^{-1}Y\,^tG\\&X_j\,^tG\end{pmatrix},
q^{j/2}\right]$$
where 
$G$ varies over $\K_j(q)$, and $Y$ varies over $\Y_{j}(q^2)$, meaning that $Y=\begin{pmatrix}U&V\\^tV&0\end{pmatrix}$
so that $U$ varies over integral symmetric $j\times j$ matrices modulo $q^2$, and 
$V$ varies over integral $j\times(n-j)$ matrices modulo $q$.
\item[(b)]  For $p$ a prime not dividing $4\stufe$ and $1\le j\le n$, we have
\begin{align*}
&F|T_j(p^2)\\
&=p^{j(k/2-n-1)}
\sum_{\substack{n_0,n_2\\G,Y}}F|\chi(p^{j-n_0+n_2})\\
&\cdot
\left[
\begin{pmatrix}X_{n_0,n_2}^{-1}G^{-1}&X_{n_0,n_2}^{-1}Y\,^tG\\
0&X_{n_0,n_2}\,^tG\end{pmatrix}, 
p^{-j/2+n_0}\G_{Y_1}(pI_{j-n_0-n_2})
\right].
\end{align*}
Here, $n_0,n_2\in\Z_{\ge0}$ vary so that $n_0+n_2\le j$. For each pair $n_0,n_2$, we have
$G=G_1G_2$, where $G_1$ varies over $SL_n(\Z)/\K_{n_0,n_2}(p)$,
$$G_2=\begin{pmatrix} I_{n_0}\\&G'\\&&I_{n_2}\end{pmatrix}$$ with $G'$ varying over
$SL_{n'}(\Z)/\,^t\K'_{j'}(p)$ where $n'=n-n_0-n_2$, $j'=j-n_0-n_2$,
$$\K'_{j'}=\begin{pmatrix} pI_{j'}\\&I\end{pmatrix} SL_{n'}(\Z)\begin{pmatrix}\frac{1}{p}I_{j'}\\&I\end{pmatrix}\cap SL_{n'}(\Z),$$
 and $Y$ varies over $\Y_{n_0,n_2}(p^2)$, the set
of all integral, symmetric $n\times n$ matrices 
$$\begin{pmatrix} Y_0&Y_2&Y_3&0\\^tY_2&Y_1/p&0\\^tY_3&0\\0\end{pmatrix}$$
with $Y_0$ $n_0\times n_0$, varying modulo $p^2$, $Y_1$ $j'\times j'$, varying
modulo $p$ provided $p\nmid\det Y_1$, 
and $Y_2,Y_3$  varying modulo $p$ with  $Y_3$ $n_0\times n_2$.
Note that we can assume $G\equiv I\ (4\stufe)$ and $Y\equiv 0\ (4\stufe)$.
Also,
$$\G_{Y_1}(pI_{j-n_0-n_2})=\left(\frac{\det Y_1}{p}\right)\G_1(p)^{j-n_0-n_2}.$$
\end{enumerate}
\end{prop}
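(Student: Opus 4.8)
The plan is to obtain both formulas by specializing the general half-integral weight Hecke operator descriptions from \cite{half-int} to our situation and then recording the effect of our normalization. For part (a), where $q\mid 4\stufe$, I would invoke Proposition 1.3 and Theorem 2.3 of \cite{half-int} to produce a set of coset representatives for the double coset defining $T_j(q^2)$. Because $q$ divides the level, the splitting behavior at $q$ is rigid: the representatives reduce to the upper-triangular matrices indexed by $G\in\K_j(q)$ together with the translation part $Y\in\Y_j(q^2)$, and the attached automorphy factor is simply the constant $q^{j/2}$ (a square root of the similitude, with no Gauss sum appearing). The prefactor $q^{j(k/2-n-1)}$ records our normalization of $T_j(q^2)$ relative to the operator in \cite{half-int}; the one point to verify carefully is that this power of $q$ is consistent with the similitude factor of the double coset.

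For part (b), where $p\nmid 4\stufe$, I would again start from the coset decomposition in \cite{half-int}, now organized by the two parameters $n_0,n_2$ that record the ranks controlling the splitting of the lattice at $p$, together with the quotients $SL_n(\Z)/\K_{n_0,n_2}(p)$ and $SL_{n'}(\Z)/\,^t\K'_{j'}(p)$ and the translation set $\Y_{n_0,n_2}(p^2)$. Combining this with Proposition 2.1 of \cite{int wt} accounts for the character contribution $\chi(p^{j-n_0+n_2})$ and yields the automorphy factor $p^{-j/2+n_0}\G_{Y_1}(pI_{j-n_0-n_2})$. The reduction permitting us to take $G\equiv I$ and $Y\equiv 0$ modulo $4\stufe$ comes from the freedom to adjust the representatives by elements congruent to the identity away from $p$; this is exactly where coprimality of $p$ to $4\stufe$ is used, since it lets us decouple the structure at $p$ from the level structure.

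The one genuinely computational step, and the place where I expect to spend the most care, is the Gauss sum simplification
$$\G_{Y_1}(pI_{j-n_0-n_2})=\left(\frac{\det Y_1}{p}\right)\G_1(p)^{j-n_0-n_2}.$$
Writing $\ell=j-n_0-n_2$ and unwinding the definition of $\G_C(D)$ with $C=Y_1$ and $D=pI_\ell$, the sum becomes $\sum_{U\in(\Z/p\Z)^{1,\ell}}\exp\left(\frac{2\pi i}{p}\,U Y_1\,^tU\right)$, a classical Gauss sum attached to the quadratic form $Y_1$ over $\F=\Z/p\Z$. Since $p$ is odd and $Y_1$ is invertible modulo $p$, I would diagonalize $Y_1$ over $\F$, say $Y_1\simeq\big<a_1,\ldots,a_\ell\big>$, so that the sum factors into a product of one-dimensional Gauss sums $\sum_u\exp(2\pi i a_i u^2/p)=\left(\frac{a_i}{p}\right)\G_1(p)$; multiplying these and noting that $\det Y_1$ and $\prod_i a_i$ differ by a square (hence agree under the Legendre symbol) gives the stated identity. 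The main obstacle throughout is bookkeeping rather than any conceptual difficulty: matching our normalizations of $T_j(q^2)$ and $T_j(p^2)$ to those in \cite{half-int} and \cite{int wt}, and tracking the various powers of $p$ and $q$ correctly.
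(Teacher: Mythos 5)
Your proposal matches the paper's treatment: the paper derives this proposition by citing Proposition 1.3 and Theorem 2.3 of \cite{half-int} together with Proposition 2.1 of \cite{int wt}, noting only the renormalization and the simplification of the Gauss sum, which is exactly your plan. Your explicit diagonalization argument for $\G_{Y_1}(pI_{j-n_0-n_2})=\left(\frac{\det Y_1}{p}\right)\G_1(p)^{j-n_0-n_2}$ is correct and is the same technique the paper uses for the analogous computation in Proposition 5.3.
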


\begin{thm}  Suppose that $\stufe\in\Z_+$ is odd and square-free, $\chi$ is a character modulo $4\stufe$ so that $\chi(-1)=1$.  Fix a prime $q|\stufe$ and a multiplicative partition
$\sigma'=(\stufe'_0,\ldots,\stufe'_n)$ of $\stufe/q$.  For $0\le d\le n$, let
$\sigma_d=(\stufe_0,\ldots,\stufe_n)$ where
$$\stufe_i=\begin{cases}\stufe_i'&\text{if $i\not=d$,}\\
q\stufe_d'&\text{if $i=d$.}
\end{cases}$$
Then when $\E_{\sigma_d}\not=0$ for $0\le j\le n$, we have
$$\E_{\sigma_d}|T_j(q^2)=\sum_{t=0}^{n-d} A_j(d,t)\E_{\sigma_{d+t}}$$
where
\begin{align*}
A_j(d,t)&=q^{(j-t)d-t(t+1)/2}\bbeta_q(d+t,t)\\
&\quad \cdot \sum_{s=0}^j\sum_{d_5=0}^{j-s}\sum_{d_8=0}^{d_5}
q^{a_j(d;s,d_5,d_8)}
\overline\chi_{\stufe/q}(X_{s,r}^{-1}M_{\sigma_{d}}X_j,X_{s,r}^{-1}X_j^{-1})    \\
&\quad \cdot \bbeta_q(d,s)\bbeta_q(t,d_5)\bbeta_q(n-d-t,j-s+d_8-t)\\
&\quad \cdot \bbeta_q(t-d_5,d_8)\sym_q^{\chi}(t-d_5-d_8)\sym_q^{\chi'}(d_5,d_8)\\
&\quad \cdot q^{-k(d_5-d_8)/2}\G_1(q)^{k(d_5-d_8)},\\
%\end{align*}
r&=j-s-d_5+d_8,\\
%\begin{align*}
a_j(d;s,d_5,d_8)
&=(k/2-d)(2s+d_5-d_8)+s(s-d_8-j-1)\\
&\quad +d_8(j-d_5)-d_5(d_5+1)/2+d_8(d_8+1)/2,
\end{align*}
and $\chi'_q(*)=\chi_q(*)\left(\frac{*}{q}\right).$
(Here $\sym_q^{\chi}(b,c)$ is as defined in section 2.)
Thus $\widetilde \E_{\sigma_d}|T_j(q^2)=A_j(d,0)\widetilde\E_{\sigma_d}.$
\end{thm}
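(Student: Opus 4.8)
The plan is to start from the explicit coset description of $\E_{\sigma_d}$ and push it through the formula for $T_j(q^2)$ in Proposition 4.2(a). Concretely, by Proposition 3.2 (and the reduction to $SL_n(\Z)$-representatives noted above) I would write
$$\E_{\sigma_d}(\tau)=\sum_{(M\,N)}\overline\chi(M,N)\left(\frac{\overline\G_M(N)}{\sqrt{\det N}}S_{M,N}(\tau)\right)^{-k},$$
where $SL_n(\Z)(M\ N)$ runs once over $SL_n(\Z)(M_{\sigma_d}\ I)\Gamma_0(4\stufe)$. Applying Proposition 4.2(a), each Hecke coset representative $\begin{pmatrix}X_j^{-1}G^{-1}&X_j^{-1}Y\,^tG\\0&X_j\,^tG\end{pmatrix}$ acts on the right on each pair $(M\ N)$, producing a new coprime symmetric pair $(M'\ N')$; the factor $q^{j/2}$ together with the transformation law for $S_{M,N}$ contributes a power of $q$ and reproduces the $S_{M',N'}$ needed to reassemble an Eisenstein series.

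The central task is to control the automorphy factors. Using the cocycle relation for $[\gamma,\varphi_{_\gamma}]$ together with parts (a)--(c) of Proposition 2.2, I would rewrite $\overline\G_M(N)/\sqrt{\det N}$ after the substitution in terms of $\overline\G_{M'}(N')/\sqrt{\det N'}$ and a correction factor. Because $q$ is odd and $q\mid\stufe$, the correction splits into a contribution at $q$ and contributions away from $q$; the latter only affect the character $\chi_{\stufe/q}$, recorded by the value $\overline\chi_{\stufe/q}(X_{s,r}^{-1}M_{\sigma_d}X_j,X_{s,r}^{-1}X_j^{-1})$. The local factor at $q$ is where the half-integral weight genuinely enters: the generalized Gauss sums $\G_M(N)$ reduce, via the identities of Section 5 (Propositions 5.1--5.3) together with Proposition 2.2, to classical Gauss sums, yielding the factor $q^{-k(d_5-d_8)/2}\G_1(q)^{k(d_5-d_8)}$ and forcing the quadratic twist $\chi'_q=\chi_q\left(\tfrac{\cdot}{q}\right)$ to appear alongside $\chi_q$.

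Once the automorphy factors are under control, I would reorganize the sum over $(M'\ N')$ into $\Gamma_{\infty}$-orbits, i.e.\ into $SL_n(\Z)$-left-classes, and sort these orbits by their rank modulo $q$, which takes values $d,d+1,\dots,n$; an orbit of rank $d+t$ contributes to $\E_{\sigma_{d+t}}$, explaining the range $0\le t\le n-d$. The multiplicity with which each orbit occurs is a purely local count over $\F=\Z/q\Z$: it amounts to counting totally isotropic subspaces and symmetric matrices of prescribed determinant-type, exactly the quantities $\bbeta_q(\cdot,\cdot)$, $\sym_q^{\chi}(\cdot)$ and $\sym_q^{\chi'}(\cdot,\cdot)$ from Section 2 (so Lemma 2.3 is invoked to evaluate them). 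Collecting the power of $q$ from the normalization $q^{j(k/2-n-1)}$ in Proposition 4.2(a), from $q^{j/2}$, and from the change of the variables $G,Y$ and of $X_{s,r}$, together with the combinatorial counts, produces the exponent $a_j(d;s,d_5,d_8)$ and the prefactor $q^{(j-t)d-t(t+1)/2}\bbeta_q(d+t,t)$; reading everything off gives $A_j(d,t)$.

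I expect the main obstacle to be precisely the local bookkeeping at $q$: matching the triple sum over $s,d_5,d_8$ to the geometry of how a rank-$d$ pair degenerates to a rank-$(d+t)$ pair modulo $q$, and verifying that the Gauss-sum factor collapses to $q^{-k(d_5-d_8)/2}\G_1(q)^{k(d_5-d_8)}$ with the correct interplay between $\chi_q$ and $\chi'_q$. Much of the underlying combinatorics parallels Corollary 4.5 of \cite{int wt}, so I would import that structure and concentrate the new work on the Gauss-sum contributions. Finally, the eigenform assertion $\widetilde\E_{\sigma_d}|T_j(q^2)=A_j(d,0)\widetilde\E_{\sigma_d}$ follows formally: the matrix $\big(A_j(d,t)\big)$ is upper triangular in $d$ (since $t\ge0$), so the change of basis producing the simultaneous eigenforms $\widetilde\E_{\sigma_d}$ is triangular and preserves the filtration by $d$, whence the eigenvalue is the diagonal entry $A_j(d,0)$.
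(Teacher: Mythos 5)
Your proposal follows essentially the same route as the paper's proof: expand $\E_{\sigma_d}$ via Proposition 3.2, apply the explicit coset formula of Proposition 4.2(a), normalize the pairs $(M\ N)$ by $X_{s,r}$, control the automorphy factors and generalized Gauss sums via Proposition 2.2 and Propositions 5.1--5.3 (which is exactly where the factor $q^{-k(d_5-d_8)/2}\G_1(q)^{k(d_5-d_8)}$ and the twist $\chi'_q$ arise), import the count of admissible $(E,G,Y)$ from the integral-weight computation in \cite{int wt}, and evaluate the resulting character sums with Lemma 2.3. The only minor discrepancies are that the paper leans on the proof of Theorem 4.4 of \cite{int wt} (rather than Corollary 4.5) for the combinatorial bookkeeping, and the closing eigenform assertion rests on the same triangularity observation you give.
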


\begin{proof}  
Here we make use of results derived in the proof of Theorem 4.4  \cite{int wt}, where we evaluated the action of $T_j(q^2)$ on integral weight Eisenstein series.
The difference is that here we need to compare automorphy factors Gauss sums.

To ease notation, temporarily write $\E_{d'}$
for $\E_{\sigma_{d'}}$, $M_{d'}$ for $M_{\sigma_{d'}}$, $X_{s,r}$ for $X_{s,r}(q)$, $\K_{s,r}$ for $\K_{s,r}(q)$, and
$\Y_{j}$ for $\Y_{j}(q^2)$.

By Proposition 4.2,
we have
\begin{align*}
\E_d(\tau)|T_j(q^2)
&=q^{-j(n+1)} \sum_{(M\,N),G,Y}\overline\chi(M,N)
\left(\frac{\overline\G_M(N)}{\sqrt{\det N}}\right)^{-k}\\
&\qquad\cdot
S_{M,N}(X_j^{-1}G^{-1}(\tau+GY\,^tG)\,^tG^{-1}X_j^{-1})^{-k}
\end{align*}
where $SL_n(\Z)(M\ N)$ varies over the $\Gamma_0(4\stufe)$-orbit of
$SL_n(\Z)(M_d\ I)$, and $G, Y$ vary as in Proposition 4.2.
By Proposition 2.2 we know that
for $E\in SL_n(\Z)$, we have $\chi(EM,EN)=\chi(M,N)$, $\G_{EM}(EN)=\G_M(N)$,
$\sqrt{EN}=\sqrt{N}$, and $S_{EM,EN}(\tau)=S_{M,N}(\tau)$ for all $\tau\in\h_{(n)}$.
Thus for each $G,Y$, we follow \cite{int wt} to adjust the pair $(M\ N)$ by left multiplication from $SL_n(\Z)$ so that, with appropriate choices of $s,r$, we have that
$$(X_{s,r}MX_j^{-1}G^{-1},\,X_{s,r}NX_j\,^tG) = 1.$$
We set
$M'=X_{s,r}MX_j^{-1}G^{-1}$,
$N''=X_{s,r}NX_j\,^tG$,  and $N'=M'GY\,^tG+N''$.
Then
$$\frac{S_{M,N}(X_j^{-1}G^{-1}(\tau+GY\,^tG)\,^tG^{-1}X_j^{-1})}
{S_{M',N''}(\tau+GY\,^tG)}$$
is an analytic function whose square is $q^{r-s-j}$ and whose limit as $\tau\mapsto0$ is 
$q^{(r-s-j)/2}$; thus the quotient above is equal to $q^{(r-s-j)/2}$.  
Further, since $q\not=2$, from Proposition 2.2 we know that 
$$\frac{\overline\G_{M'}(N'')}{\sqrt{\det N''}}\, S_{M',N''}(\tau+GY\,^tG)
=\frac{\overline\G_{M'}(N')}{\sqrt{\det N'}}\,S_{M',N'}(\tau).$$
Hence
\begin{align*}
\E_{d}(\tau)|T_j(q^2)
&=q^{j(k/2-n-1)}\sum_{(M\,N), G, Y}\overline\chi(M,N)
\left(\frac{\overline\G_{M'}(N')}{\sqrt{\det N'}}\right)^{-k}
\\
&\qquad\cdot q^{k(s-r)/2}\left(\frac{\overline\G_M(N)\cdot\sqrt{\det N''}}{\sqrt{\det N}\cdot\overline\G_{M'}(N'')}\right)^{-k}\,S_{M',N'}(\tau)^{-k}
\end{align*}
where $SL_n(\Z)(M\ N)$ varies over $SL_n(\Z)(M_d\ I)\Gamma_0(4\stufe)$,
 $G, Y$ vary as in Proposition 4.2, and $r, s, M', N'', N'$ are as defined above.
Note that we necessarily have $\rank_qM'\ge\rank_qM=d$.

Given a coprime symmetric pair $(M'\ N')$, we want to count how often 
$$SL_n(\Z)(M'\ N')=SL_n(\Z)X_{s,r}(MX_j^{-1}G^{-1}\ \,MX_j^{-1}Y\,^tG+NX_j\,^tG)$$
for some $(M\ N)\in SL_n(\Z)(M_d\ I)\Gamma_0(4\stufe).$  
Equivalently, we want to count how often
\begin{align*}
(*)\qquad& X_{s,r}^{-1}E(M'GX_j\ \,-M'GYX_j^{-1}+N'\,^tG^{-1}X_j^{-1})\\
&\in SL_n(\Z)(M_d\ I)\Gamma_0(4\stufe)
\end{align*}
for some $E\in SL_n(\Z)$.  Since $X_{s,r}^{-1}EX_{s,r}\in SL_n(\Z)$ for $E\in SL_n(\Z)$ if and only if $E\in\K_{s,r}$, we only need to consider $E\in\K_{s,r}\backslash SL_n(\Z)$.
Thus
$$\E_d(\tau)|T_j(q^2)=\sum_{(M'\,N')}c_d(M',N')\overline\chi(M',N')\left(\frac{\overline\G_{M'}(N')}{\sqrt{\det N'}} S_{M',N'}(\tau)\right)^{-k}$$
where
\begin{align*}
c_d(M',N')&=q^{j(k/2-n-1)}\chi(M',N')\sum_{s,r,E,G,Y}\overline\chi(M,N)q^{k(s-r)/2}\\
&\quad\cdot \left(\frac{\overline\G_M(N)}{\sqrt{\det N}}\frac{\sqrt{\det X_{s,r}NX_j}}{\overline\G_{X_{s,r}MX_j^{-1}}(X_{s,r}NX_j)}
\right)^{-k},
\end{align*}
$s,r\in\Z_{\ge0}$, $E\in\K_{s,r}\backslash SL_n(\Z)$, $G\in SL_n(\Z)/\K_j$, $Y\in\Y_j$
such that 
\begin{align*}
(M\ N)&=X_{s,r}^{-1}E(M'GX_j\ \-M'GYX_j^{-1}+N'\,^tG^{-1}X_j^{-1})\\
&\in SL_n(\Z)(M_d\ I)\Gamma_0(4\stufe).
\end{align*}
(Here we have used that for $G\in SL_n(\Z)$ we have $\G_{M'}(N')=\G_{M'G}(N'\,^tG^{-1})$.)
We also know that $\E_d|T_j(q^2)$ is a modular form, and hence is a linear combination of $\E_{d'}$ for $d'\ge d$.
Thus $\E_d|T_j(q^2)=\sum_{d'\ge d}c_d(M_{d'},I)\E_{d'}$.

As shown in the proof of Theorem 4.4 \cite{int wt}, given $s,r$, each solution $E,G,Y$
to (*) corresponds to choices for $s, d_5, d_7, d_8$ so that $s\le d$, $d'=d+d_5+d_7+d_8$ and
$M,N$ have the following forms.
$$M=\begin{pmatrix} A_1'&qA_1&qA_2'&qA_2\\qA_3'&qA_3&A_4'&qA_4\\
qA_5'&qA_5&qA_6'&qA_6\\q^2A_7'&q^2A_7&qA_8'&qA_8\end{pmatrix},\ 
N=\begin{pmatrix}D_1'&D_1&D_2'&D_2\\D_3'&D_3&D_4'&D_4\\
qD_5'&D_5&qD_6'&D_6\\qD_7'&D_7&qD_8'&qD_8\end{pmatrix}$$
where $A_1', D_1'$ are $s\times s$, $A_4', D_4'$ are $(d-s)\times(d-s)$,
$A_7, D_7$ are $r\times(j-s)$,
$A_1', A_4'$ are invertible modulo $q$.  Since $(M,N)=1$, we must have (row)
$\rank_q\begin{pmatrix}D_5&D_6\\D_7&0\end{pmatrix}=n-d$,
and since $(X_{s,r}MX_j^{-1}\ \,X_{s,r}NX_j)=1$, we must have (row)
$\rank_q\begin{pmatrix}A_5&0&0&D_6\\A_7&A_8&D_7&0\end{pmatrix}=n-d.$
So $\rank_qD_7=r$ and $\rank_q(A_5\ D_6)=n-d-r.$
Further, adjusting $E$ by left multiplication from $\K_{s,r}$ and $G$ by right multiplication from
$\K_{j}$, we can assume that modulo $q$,
\begin{align*}
&A_5\equiv\begin{pmatrix}\alpha_5&0&0\\0&0&0\end{pmatrix},\\
&A_7\equiv\begin{pmatrix}0&0&0\\0&0&\alpha_7\\0&0&0\end{pmatrix},\ 
A_8\equiv\begin{pmatrix}0&0\\0&0\\ \alpha_8&0\end{pmatrix}
\end{align*}
where $\alpha_5$ is $d_5\times d_5$,  
$\alpha_7$ is $d_7\times d_7$, $\alpha_8$ is $d_8\times d_8$, and $\alpha_5, \alpha_7, \alpha_8$ are invertible modulo $q$.
(So 
we necessarily have $d_5+d_7\le j-s$ and $d_8\le n-j-d+s$.)
Also, as $(M\ N)$ is a coprime symmetric pair,  modulo $q$ we have
\begin{align*}
&D_5\equiv\begin{pmatrix}\beta_1'&*&*\\0&*&*\end{pmatrix},\ 
D_6\equiv\begin{pmatrix}\gamma_1'&*\\0&\gamma_4\end{pmatrix},\\
&D_7\equiv\begin{pmatrix}0&\delta_2&0\\0&*&\delta_6'\\ \delta_7'&*&*\end{pmatrix}
\end{align*}
where 
$\beta_1'$ is $d_5\times d_5$, $\gamma_1'$ is $d_5\times d_8$, 
$\gamma_4$ is $(n-d-r-d_5)\times(n-j-d+s-d_8)$,
$\delta_2$ is $(r-d_7-d_8)\times(j-s-d_5-d_7)$,
$\delta_6'$ is $d_7\times d_7$,
and $\delta_7'$ is $d_8\times d_5$.
Then a careful analysis in the proof of Theorem 4.4 \cite{int wt} tells us that 
$d_5\ge d_8$, $r=j-s-d_5+d_8$, and $\gamma_4, \delta_2, \delta_6', \begin{pmatrix}\beta_1'&\gamma_1'\\ \delta_7'&0\end{pmatrix}$ are square and invertible modulo $q$.

From the above descriptions of $M,N$, we can see that 
$$(X_{0,r}MX_j^{-1}X_s\ \,X_{0,r}NX_jX_s^{-1})$$
is an integral, coprime pair (which is necessarily symmetric).
Thus by Propositions 2.2(c) and 5.1, we have
$$\G_{X_{s,r}MX_j^{-1}}(X_{s,r}NX_j)  
=q^s \G_{X_{0,r}MX_j^{-1}X_s}(X_{0,r}NX_jX_s^{-1}).$$

Let $$P_1=\begin{pmatrix}0&I_s&0\\I_{j-s}&0&0\\0&0&I_{n-j}\end{pmatrix}.$$
So using Propostion 2.2(c) and recalling that $P_1^{-1}=\,^tP_1$, we have
\begin{align*}
\G_{X_{0,r}MX_j^{-1}X_s}(X_{0,r}NX_jX_s^{-1})
&=\G_{X_{0,r}MX_j^{-1}X_sP_1}(X_{0,r}NX_jX_s^{-1}P_1)\\
&=\G_{X_{0,r}MP_1X_{j-s}^{-1}}(X_{0,r}NP_1X_{j-s}).
\end{align*}

% modification 26/04/16

To prepare to use Proposition 5.2, choose $E_0, G_0\in SL_{d_5}(\Z)$ so that
$$E_0\gamma_1'\equiv\begin{pmatrix}0\\ \gamma_1''\end{pmatrix}\ (q),\ 
\delta_7'G_0\equiv\begin{pmatrix}0& \delta_7''\end{pmatrix}\ (q)$$
where $\gamma_1'', \delta_7''$ are $d_8\times d_8$.  Write
$$E_0\beta_1' G_0=\begin{pmatrix}\beta_1''&\rho_2\\ \rho_3&\rho_4\end{pmatrix}$$
where $\beta_1''$ is $(d_5-d_8)\times(d_5-d_8)$.
So 
$$\begin{pmatrix}E_0\\&I_{d_8}\end{pmatrix}\begin{pmatrix}\beta_1'&\gamma_1'\\ \delta_7'&0\end{pmatrix}
\begin{pmatrix}G_0\\&I_{d_8}\end{pmatrix}
\equiv\begin{pmatrix}\beta_1''&\rho_2&0\\ \rho_3&\rho_4&\gamma_1''\\0&\delta_7''&0\end{pmatrix}\ (q),$$
and since this matrix is invertible modulo $q$, we must have $\rank_q\delta_7''=d_8=\rank_q\gamma_1''$, and $\rank_q\beta''_1=d_5-d_8$.  (So $\beta''_1, \gamma_1'', \delta_7''$ are invertible modulo $q$.)
Write 
$$E_0\alpha_5\,^tG_0^{-1}=\begin{pmatrix}\alpha_5'&\omega_2\\ \omega_3&\omega_4\end{pmatrix}$$
where $\alpha_5'$ is $(d_5-d_8)\times(d_5-d_8)$.  By the symmetry of $M\,^tN$, we have that
$$\begin{pmatrix}E_0\\&I_{d_8}\end{pmatrix}\begin{pmatrix}\beta_1'&\gamma_1'\\ \delta_7'&0\end{pmatrix}
\begin{pmatrix}^t\alpha_5\\&^t\alpha_8\end{pmatrix}\begin{pmatrix}^tE_0\\&I_{d_8}\end{pmatrix}$$
is symmetric modulo $q$; consequently $\omega_2\equiv0\ (q)$ and so $\alpha_5', \omega_4$ are invertible modulo $q$.
Now set
$$E=\begin{pmatrix}I_d\\&E_0\\&&I_{n-d-d_5}\end{pmatrix},\ 
G=\begin{pmatrix}G_0\\&I_{n-d_5}\end{pmatrix},
P_2=\begin{pmatrix}0&I_{d_5-d_8}&0\\I_r&0&0\\0&0&I_{n-j-s}\end{pmatrix}.$$
Then $E$ commutes with $X_{0,r}$ (since $d_8\le n-j-d+s$ and hence $r=j-s-d_5+d_8\le n-d-d_5$).
Somewhat similarly, $G$ and $P_2$ commute with $X_{j-s}$ (since $d_5\le j-s$ and $r+d_5-d_8=j-s$).
Thus by Proposition 2.2(c),
\begin{align*} 
&\G_{X_{0,r}MP_1X_{j-s}^{-1}}(X_{0,r}NP_1X_{j-s})\\
&\quad=
\G_{X_{0,r}EMP_1\,^tG^{-1}P_2X_{j-s}^{-1}}(X_{0,r}ENP_1GP_2X_{j-s}).
\end{align*}
Hence with $\widetilde M=EMP_1\,^tG^{-1}$ and $\widetilde N=ENP_1G$, we know that
\begin{align*}
&(\widetilde MP_2X_{j-s}^{-1}X_r\ \ \widetilde NP_2X_{j-s}X_r^{-1})
\text{ is an integral coprime pair}\\
\iff\ &(\widetilde MP_2X_{j-s}^{-1}X_r\,^tP_2\ \ \widetilde NP_2X_{j-s}X_r^{-1}\,^tP_2)
\text{ is an integral coprime pair}\\
\iff\ &(\widetilde MX_{j-s-r}^{-1}\ \ \widetilde NX_{j-s-r})
\text{ is an integral coprime pair.}
\end{align*}
Recall that $P_1$ has permuted the 1st $j-s$ columns of $M$ with the next $s$ columns of $M$, and similarly for $N$.
(So, for instance, the top row of blocks of $MP_1$ is $(qA_1\ A_1'\ qA_2'\ qA_2)$ and hence $\widetilde MX_{j-s-r}^{-1}$ is integral.)
Using our block decompositions of $M$ and $N$ in terms of subscripted $A$s and $D$s, we have the following.
In $\widetilde MX_{j-s-r}^{-1}$, the $(1,2)$ block is $A_1'$ which is $s\times s$ and invertible modulo $q$, and the $(2,3)$ block is $A_4'$ which is $(d-s)\times(d-s)$ and invertible modulo $q$.  Let $\widetilde A_5$ denote the $(3,1)$ block of $\widetilde MX_{j-s-r}^{-1}$, and $\widetilde D_5, \widetilde D_6$ and  $\widetilde D_y$ the $(3,1), (3,4)$ and $(4,1)$ blocks of $\widetilde NX_{j-s-r}$.  Then modulo $q$, we have
\begin{align*}
\widetilde A_5\equiv\begin{pmatrix}\alpha_5'&0&0&0\\ \omega_3&0&0&0\\ 0&0&0&0\end{pmatrix},\ 
\widetilde D_5&\equiv\begin{pmatrix}q\beta''_1&*&*&*\\0&*&*&*\\0&0&*&*\end{pmatrix},\ 
\widetilde D_6\equiv\begin{pmatrix}0&*\\ \gamma_1''&*\\0&\gamma_4\end{pmatrix},\\
\widetilde D_7&\equiv\begin{pmatrix}0&0&\delta_2&0\\0&0&*&\delta_6'\\0&\delta_7''&*&*\end{pmatrix},
\end{align*}
where (as we've previously noted) $\alpha_5', \beta''_1, \gamma_1'', \gamma_4, \delta_2, \delta_6', \delta_7''$ are square and invertible modulo $q$.
Hence $(\widetilde MX_{j-s-r}^{-1}\ \,\widetilde NX_{j-s-r})$ has $q$-rank $n$, and thus is a coprime symmetric pair.
So by Proposition 5.2 we have
\begin{align*}
\G_{X_{0,r}\widetilde MP_2X_{j-s}^{-1}}(X_{0,r}\widetilde NP_2X_{j-s})
&=\G_{\widetilde MP_2X_{j-s}^{-1}X_r}(\widetilde NP_2X_{j-s}X_r^{-1})\\
&=\G_{\widetilde MX_{j-s-r}^{-1}}(\widetilde NX_{j-s-r})
\end{align*}
since $P_2X_{j-s}X_r^{-1}\,^tP_2=X_{j-s-r}.$
If $j=s+r$ then with Proposition 2.2(c) we have 
$$\G_{X_{0,r}\widetilde MP_2X_{j-s}^{-1}}(X_{0,r}\widetilde NP_2X_{j-s})=\G_{\widetilde M}(\widetilde N)=\G_M(N).$$

Suppose $j>s+r$.  Then we modify $G$ in our previous step to prepare to apply Proposition 5.3.
Take $G_0$ as before, and choose an integral $(d_5-d_8)\times r$ matrix $W$ so that 
$$\begin{pmatrix}E_0\\&I_{n-d-r-d_5}\end{pmatrix}D_5
\begin{pmatrix}G_0\\&I_{j-s-d_5}\end{pmatrix}
\begin{pmatrix}I_{d_5-d_8}&W\\&I_r\end{pmatrix}
\equiv\begin{pmatrix}\beta''_1&0\\ *&*\end{pmatrix}\ (q).$$
Then with 
$$G_1=\begin{pmatrix}G_0\\&I_{j-s-d_5}\end{pmatrix}
\begin{pmatrix}I_{d_5-d_8}&W\\&I_r\end{pmatrix},$$
we have 
$$\begin{pmatrix}E_0\\&I_{n-d-r-d_5}\end{pmatrix}A_5\,^tG_1^{-1}\equiv\begin{pmatrix}\alpha_5'&0\\ *&*\end{pmatrix}\ (q)$$
(recall that $\omega_2\equiv0\ (q)$).  Set $G=\begin{pmatrix}G_1\\&I_{n-j+s}\end{pmatrix}$; so $G$ commutes with $X_{j-s}$.
Hence we again have
$$\G_{X_{0,r}\widetilde MP_2X_{j-s}^{-1}}(X_{0,r}\widetilde NP_2X_{j-s})
=\G_{\widetilde MX_{j-s-r}^{-1}}(\widetilde NX_{j-s-r}).$$
But now, with
$$P_3=\begin{pmatrix}I_d&0&0\\0&0&I_{n-d-d_5+d_8}\\0&I_{d_5-d_8}&0\end{pmatrix},$$
by Proposition 5.3 we have
\begin{align*}
\G_{\widetilde MX_{j-s-r}^{-1}}(\widetilde NX_{j-s-r})
&=\G_{P_3\widetilde MX_{j-s-r}^{-1}}(P_3\widetilde NX_{j-s-r})\\
&=\left(\frac{\det \alpha_5'\beta''_1}{q}\right)\left(\G_1(q)\right)^{d_5-d_8}\G_{\widetilde M}(\widetilde N)
\end{align*}
and we know by Proposition 2.2(c) that $\G_{\widetilde M}(\widetilde N)=\G_M(N).$
Recall that $r=j-s-d_5+d_8$ and $d_5\ge d_8$.  Hence we have $j=s+r$ if and only if $d_5=d_8$.  Hence for all choices of $s,r$, the above computations give us
\begin{align*}
\G_{X_{s,r}MX_j^{-1}}(X_{s,r}NX_j)
%&\quad
=q^s\left(\frac{\det \alpha_5'\beta_1''}{q}\right)\left(\G_1(q)\right)^{d_5-d_8}\G_M(N).
\end{align*}
Now, by the symmetry of $M\,^tN$, we know that $\begin{pmatrix}\beta_1'&\gamma_1'\\ \delta_7'&0\end{pmatrix}
\begin{pmatrix}^t\alpha_5\\&^t\alpha_8\end{pmatrix}$ is symmetric modulo $q$, and hence
$$\left(\frac{\det\begin{pmatrix}\beta_1'&\gamma_1'\\ \delta_7'&0\end{pmatrix}
\begin{pmatrix}^t\alpha_5\\&^t\alpha_8\end{pmatrix}}{q}\right)
=\left(\frac{-1}{q}\right)^{d_5-d_8}\,\left(\frac{\det\alpha_5'\beta_1''}{q}\right).$$
Also, $\overline\G_1(q)=\G_{-1}(q)=\left(\frac{-1}{q}\right)\G_1(q).$  Thus
\begin{align*}
&\frac{\overline\G_M(N)\,\sqrt{\det N''}}{\sqrt{\det N}\,\overline\G_{M'}(N'')}
%&\qquad
=\frac{q^{(d_5-d_8)/2}}{(\G_1(q))^{d_5-d_8}}
\left(\frac{\det
\begin{pmatrix}\beta_1'\,^t\alpha_5&\gamma_1'\,^t\alpha_8\\ \delta_7'\,^t\alpha_5&0
\end{pmatrix}}{q}\right).
\end{align*}
To evaluate $c_{d}(M_{d'},I)$, we also need to evaluate
$$\chi(M,N)=\chi(X_{s,r}^{-1}EM_{d'}GX_j,X_{s,r}E\,^tG^{-1}X_j^{-1}).$$
We note that by Lemma 6.1 \cite{int wt} we can choose $E,G\equiv I\ (4\stufe/q)$, and we can choose
$Y\equiv0\ (4\stufe/q)$.  Thus $M\equiv M_{d'}\ (4\stufe/q)$ so
$$\chi_{4\stufe/q}(M,N)=\chi_{4\stufe/q}(X_{s,r}^{-1}M_{d'}X_j,X_{s,r}^{-1}X_j^{-1}).$$
As shown in the proof of Theorem 4.4 \cite{int wt}, with $(M\ N)$ as above we have
$$\overline\chi_q(M,N)
=\overline\chi_q\left(\det\begin{pmatrix}\beta_1'\,^t\alpha_5&\gamma_1'\,^t\alpha_8\\
\delta_7'\,^t\alpha_5&0\end{pmatrix}\det(\delta_6'\,^t\alpha_7)\right).$$

Now we need to consider what happens when we fix $s,r,d'$ and let $E,G,Y$ vary
so that $X_{s,r}^{-1}E(M_{d'}GX_j\ \ -M_{d'}GYX_j^{-1}+\,^tG^{-1}X_j^{-1})\in SL_n(\Z)(M_{d'}\ I).$
As proved in Theorem 4.4 \cite{int wt}, there are 
\begin{align*}
&\bbeta_q(d,s)\bbeta_q(d'-d,d_5)\bbeta_q(n-d',j-s+d_8-d'+d)\bbeta_q(d'+d-d_5,d_8)\\
&\cdot q^{(d+d_5)(r+d+d_5-d')+s(n-d-d_5)+(d_4+d_8)(j-s-d_5)-d_7d_8}
\end{align*}
permissible choices for $(E,G)$, and for each choice of $(E,G)$, as $Y$ varies over permissible choices,
the matrix 
$$\begin{pmatrix}\beta_1'\,^t\alpha_5&\gamma_1'\,^t\alpha_8&0\\
\delta_7'\,^t\alpha_5&0&0\\
0&0&\delta_6'\,^t\alpha_7\end{pmatrix}$$
varies $q^{(j-s)(n-d+1)-d_5(j-s+d_8+1)-d_7(d_7+1)/2}$ times over all symmetric,
invertible matrices modulo $q$.
As $\beta_1',\gamma_1',\delta_6',\delta_7'$ vary as such,
\begin{align*}
&\sum\overline\chi_q\left(\det\begin{pmatrix}\beta_1'\,^t\alpha_5&\gamma_1'\,^t\alpha_8\\
\delta_7'\,^t\alpha_5&0\end{pmatrix}\det(\delta_6'\,^t\alpha_7)\right)
\left(\frac{\det\begin{pmatrix}\beta_1'\,^t\alpha_5&\gamma_1'\,^t\alpha_8\\
\delta_7'\,^t\alpha_5&0\end{pmatrix}}{q}\right)\\
&\quad = \sym_q^{\overline\chi'}(d_5,d_8)\sym_q^{\overline\chi}(d_7),
\end{align*}
and by Lemma 2.3, this is $\sym_q^{\chi'}(d_5,d_8)\sym_q^{\chi}(d_7)$.
Now combining the above results yields the theorem.
\end{proof}

From this theorem we can deduce a ``multiplicity-one" result.  To ease our description, we introduce the following. 

\smallskip\noindent
{\bf Definition.}  
Let $\sigma,\alpha$ be multiplicative partitions of $\stufe$, and let $q$ be a prime dividing $\stufe$.
We write $\sigma<\alpha\ (q)$ if $\rank_qM_{\sigma}<\rank_qM_{\alpha},$
$\sigma=\alpha\ (q)$ if $\rank_qM_{\sigma}=\rank_qM_{\alpha},$ and
$\sigma\le\alpha\ (q)$ if $\rank_qM_{\sigma}\le\rank_qM_{\alpha}.$
For $Q|\stufe$, we write $\sigma<\alpha\ (Q)$ if $\rank_qM_{\sigma}<\rank_qM_{\alpha}$ for all primes $q|Q$,
$\sigma=\alpha\ (Q)$ if $\rank_qM_{\sigma}=\rank_qM_{\alpha}$ for all primes $q|Q$,
$\sigma\le\alpha\ (Q)$ if $\rank_qM_{\sigma}\le\rank_qM_{\alpha}$ for all primes $q|Q$.

\begin{cor}  Let $\sigma$ be a partition of $\stufe$ 
so that $\E_{\sigma}\not=0$ and let
$q$ a prime dividing $\stufe$; set $d=\rank_qM_{\sigma}$.
For  any partition $\beta$ of $\stufe$ with $\beta\ge\sigma\ (\stufe)$, there are constants $a_{\sigma,\beta}(\stufe)$ so that
$a_{\sigma,\sigma}(\stufe)=1$, and with 
$$\widetilde\E_{\sigma}=\sum_{\beta\ge\sigma\,(\stufe)}a_{\sigma,\beta}(\stufe)\E_{\beta},$$
 we have 
$\widetilde\E_{\sigma}|T_j(q^2)=\lambda_{\sigma;j}(q^2)\widetilde\E_{\sigma}$ where
\begin{align*}
\lambda_{\sigma;j}(q^2)
=&q^{jd}\sum_{s=0}^jq^{s(k-2d+s-j-1)}
\chi_{\stufe'_0}(q^{2s})\chi_{\stufe'_n}(q^{2(j-2)})
\end{align*}
and $\stufe'_i=\stufe_i/(q,\stufe_i)$.
For $\sigma, \rho$ distinct multiplicative partitions of $\stufe$, there is some prime $q|\stufe$ so that $\lambda_{\sigma;n}(q^2)\not=\lambda_{\rho;n}(q^2).$
Further, $\widetilde\E_{\sigma}=0$ if and only if $E_{\sigma}=0$, and 
\begin{align*}
&\spn\{\widetilde\E_{\sigma}:\ \sigma\text{ is a multiplicative partition of }\stufe\ \}\\
&\quad=\spn\{\E_{\sigma}:\ \sigma\text{ is a multiplicative partition of }\stufe\ \}.
\end{align*}
\end{cor}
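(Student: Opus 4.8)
The plan is to simultaneously diagonalize the commuting family $\{T_j(q^2):q\mid\stufe,\ 1\le j\le n\}$ acting on the finite-dimensional space $V=\spn\{\E_\sigma:\sigma\text{ a partition of }\stufe\}$. By Propositions 3.3 and 3.4 the nonzero $\E_\sigma$ are linearly independent and so form a basis of $V$, and by Theorem 4.3 each $T_j(q^2)$ maps $V$ into itself. Writing $d=\rank_q M_\sigma$, Theorem 4.3 gives $\E_\sigma|T_j(q^2)=\sum_{t=0}^{n-d}A_j(d,t)\E_{\sigma_{d+t}}$, where every summand keeps the ranks of $\sigma$ at all primes other than $q$ and has $q$-rank $d+t\ge d$; hence each $T_j(q^2)$ is triangular with respect to any linear refinement of the partial order $\ge(\stufe)$, with diagonal entry $A_j(d,0)$ on $\E_\sigma$. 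First I would evaluate this diagonal entry: in the formula of Theorem 4.3 the factor $\bbeta_q(t,d_5)$ forces $d_5=0$ and then $\bbeta_q(t-d_5,d_8)$ forces $d_8=0$ when $t=0$, so that $\sym_q^\chi(0)=\sym_q^{\chi'}(0,0)=1$, the Gauss-sum factor is trivial, the prefactor is $q^{jd}$, and $a_j(d;s,0,0)=s(k-2d+s-j-1)$. After rewriting the character factor via Proposition 4.1 this leaves $A_j(d,0)=\lambda_{\sigma;j}(q^2)$; in particular, for $j=n$ the factors $\bbeta_q(d,s)\bbeta_q(n-d,n-s)$ vanish unless $s=d$, giving the single term $\lambda_{\sigma;n}(q^2)=q^{d(k-d-1)}\chi_{\stufe'_0}(q^{2d})\chi_{\stufe'_n}(q^{2(n-d)})$.

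Next I would establish the separation statement, which I expect to be the main obstacle. Since $\chi_{\stufe'_0}$ and $\chi_{\stufe'_n}$ are unimodular, $|\lambda_{\sigma;n}(q^2)|=q^{d(k-d-1)}$ depends only on $d=\rank_q M_\sigma$, and the identity $d_1(k-d_1-1)=d_2(k-d_2-1)$ with $d_1\ne d_2$ holds exactly when $d_1+d_2=k-1$. Thus at any prime $q$ where $\sigma$ and $\rho$ differ, comparing absolute values of the $T_n(q^2)$-eigenvalues already separates them unless $\rank_q M_\sigma+\rank_q M_\rho=k-1$. In this residual ``collision'' case I would compare the character factors directly: because $\sigma$ and $\rho$ agree away from $q$, their rank-$0$ and rank-$n$ parts prime to $q$ coincide, so $\lambda_{\sigma;n}(q^2)=\lambda_{\rho;n}(q^2)$ precisely when $[\chi_{\stufe'_0}(q)\overline{\chi_{\stufe'_n}(q)}]^{2(\rank_q M_\sigma-\rank_q M_\rho)}=1$. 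The delicate point is to show that, among the \emph{nonzero} Eisenstein series, this last quantity cannot equal $1$ simultaneously at every prime where $\sigma$ and $\rho$ differ; here I would feed in the vanishing criteria of Propositions 3.5 and 4.1, which constrain $\chi^2$ on the middle components and should rule out the remaining coincidences.

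Granting the separation, the diagonalization is routine linear algebra. Choosing generic constants $c_q$, the operator $\mathcal S=\sum_{q\mid\stufe}c_q T_n(q^2)$ is triangular on $V$ with diagonal entries $\sum_q c_q\lambda_{\sigma;n}(q^2)$, which are pairwise distinct by the separation; a triangular operator with distinct diagonal entries has a unique unitriangular eigenbasis, which produces $\widetilde\E_\sigma=\E_\sigma+\sum_{\beta>\sigma\,(\stufe)}a_{\sigma\beta}\E_\beta$ with $a_{\sigma\sigma}=1$ for each $\sigma$ with $\E_\sigma\ne0$. Since the Hecke operators commute, each $T_j(q^2)$ preserves the one-dimensional $\mathcal S$-eigenspaces, so every $\widetilde\E_\sigma$ is a simultaneous eigenform; reading off the diagonal entry gives $\widetilde\E_\sigma|T_j(q^2)=\lambda_{\sigma;j}(q^2)\widetilde\E_\sigma$.

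Finally I would settle the vanishing equivalence and the span identity. If $\E_\sigma\ne0$ then $\widetilde\E_\sigma=\E_\sigma+\sum_{\beta>\sigma}a_{\sigma\beta}\E_\beta$ is nonzero by the linear independence of the $\E_\beta$. Conversely, if $\E_\sigma=0$ then any eigenform with eigenvalue system $(\lambda_{\sigma;j}(q^2))$ must, by the separation, differ in eigenvalues from every nonzero $\widetilde\E_\beta$; since these exhaust the joint eigenforms of the diagonalizable family, the only possibility is $\widetilde\E_\sigma=0$, so we may set $\widetilde\E_\sigma=0$ consistently. Thus $\widetilde\E_\sigma=0$ if and only if $\E_\sigma=0$. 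Restricted to the nonzero forms the passage $\E_\sigma\mapsto\widetilde\E_\sigma$ is unitriangular, hence invertible, and therefore $\spn\{\widetilde\E_\sigma\}=\spn\{\E_\sigma\}$.
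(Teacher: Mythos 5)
Your overall strategy is the paper's: Theorem 4.3 makes each $T_j(q^2)$ triangular on the $\E_\sigma$ with respect to the order $\ge(\stufe)$, the diagonal entry $A_j(d,0)$ is extracted exactly as you do (the $\bbeta_q(t,d_5)$ and $\bbeta_q(t-d_5,d_8)$ factors killing everything but $d_5=d_8=0$ when $t=0$), the cusps are to be separated by the $T_n(q^2)$-eigenvalues, and commutativity then upgrades the $\widetilde\E_\sigma$ to simultaneous eigenforms with eigenvalues read off the diagonal. Your packaging of the diagonalization through a generic combination $\mathcal S=\sum_q c_qT_n(q^2)$ is a clean variant of the paper's prime-by-prime construction of the coefficients $a_{\sigma,\beta}(q)$, and it even sidesteps the consistency check the paper must do (that $\lambda_{\beta;n}(q^2)=\lambda_{\sigma;n}(q^2)$ whenever $a_{\sigma,\beta}\ne0$). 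The treatment of the vanishing columns and of the span identity also matches.

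The gap is exactly at the point you flag as the ``delicate point,'' and the repair you propose cannot close it. You are right that $|\lambda_{\sigma;n}(q^2)|=q^{d(k-d-1)}$ and that $d_1(k-d_1-1)=d_2(k-d_2-1)$ for $d_1\ne d_2$ precisely when $d_1+d_2=k-1$; but in that collision case the ratio of the two eigenvalues is $\bigl(\chi_{\stufe_0'}\overline\chi_{\stufe_n'}\bigr)^2(q^{d_1-d_2})$, which is identically $1$ for trivial $\chi$, and Propositions 3.5 and 4.1 only force $\chi_{q'}^2=1$ on the \emph{middle} components $\stufe_1\cdots\stufe_{n-1}$ --- they constrain neither $\chi_{\stufe_0}$ nor $\chi_{\stufe_n}$, which are the only characters appearing in that ratio. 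So no character argument of the kind you sketch can rule out the coincidence. What actually makes the separation work (and what the paper uses tacitly when it passes from $\rank_qM_\rho\ne\rank_qM_\sigma$ to $\lambda_{\rho;n}(q^2)\ne\lambda_{\sigma;n}(q^2)$) is that the collision is vacuous: $d\mapsto d(k-d-1)$ is strictly increasing on $0\le d\le n$ once $(k-1)/2\ge n$, and absolute convergence of the defining sum for a weight-$k/2$, degree-$n$ Siegel Eisenstein series requires $k/2>n+1$, so $d_1+d_2\le 2n<k-1$ and no two admissible ranks can sum to $k-1$. (The inequality between $n$ and $k$ printed in the paper must be read in the form that makes the series converge.) Replace your character argument by this monotonicity observation and the rest of your write-up goes through.
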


\begin{proof}  This proof follows the lines of reasoning used to prove Corollaries 4.2 and 4.3 in \cite{int wt}.

First, fix a multiplicative partition $\sigma=(\stufe_0,\ldots,\stufe_n)$ of $\stufe$ and a prime $q|\stufe$. Let $d=\rank_qM_{\sigma}$.  We temporarily use the notaion of Theorem 4.3; so we write $\sigma_d$ for $\sigma$, and for $t>0$, we write $\sigma_{d+t}$ for
$\rho$ where $\rho=\sigma\ (\stufe/q)$ and $\rank_qM_{\rho}=d+t$.
 Then by Theorem 4.3, we have
$$\E_{\sigma_d}|T_n(q^2)=\sum_{t\ge0}A_n(d,t)\E_{\sigma_{d+t}};$$
if $\E_{d+t}=0$ for some $t$ then we can replace $A_n(d,t)$ by 0 in this formula.
The formula for $A_n(d,0)$ is a sum on $s$ with $0\le s\le n$, and the corresponding summand has a term $\bbeta_q(d,s)\bbeta_q(n-d,n-s)$.  Consequently
$$A_n(d,0)=q^{d(k-d-1)}\overline\chi_{\stufe/q}\left(qX^{-1}_{d,n-d}(q),
\frac{1}{q}X^{-1}_{d,n-d}(q)\right)$$
since $\bbeta_q(d,s)=0$ if $s>d$ and $\bbeta_q(n-d,n-s)=0$ if $s<d$.
So we can represent $T_n(q^2)$ on
$^t(\E_{\sigma_0}\,\ldots\,\E_{\sigma_n})$ by an upper triangular matrix, whose $d$th diagonal entry has absolute value $q^{d(k-d-1)}$ when $\E_{\sigma_d}\not=0$; when $\E_{\sigma_d}=0$, we can take the $d$th column of this matrix to be zeros.
Hence we can diagonalize this matrix; correspondingly, for each $\rho$ with $\rho=\sigma\ (\stufe/q)$ and $\E_{\rho}=\not=0$, there are values $a_{\rho,\alpha}(q)$ with $a_{\rho,\rho}(q)=1$ so that 
$$\sum_{\substack{\alpha\ge\rho\,(q)\\ \alpha=\rho\,(\stufe/q)}}a_{\rho,\alpha}(q)\E_{\alpha}$$
is an eigenform for $T_n(q^2)$ with eigenvalue
$\lambda_{\sigma;n}(q^2)$ (as defined in the statement of the corollary).
Further, for $\alpha>\rho\ (q)$, $\alpha=\rho\ (\stufe/q)$,  
by Proposition 2.3 and Theorem 4.3,
we have $a_{\rho,\alpha}(q)=0$ unless $\chi_q^2=1$.

Now, for any prime $q|\stufe$ and $\sigma,\alpha$ multiplicative partitions of $\stufe$ with $\alpha\ge\sigma\ (q)$, set
$$a_{\sigma,\alpha}(q)=a_{\rho,\alpha}(q)$$
where $\rho=\sigma\ (q)$ and $\rho=\alpha\ (\stufe/q)$.  Then
for any $Q|\stufe$ and $\alpha\ge\sigma\ (Q)$, set
$$a_{\sigma,\alpha}(Q)=\prod_{\substack{q|Q\\q\,\text{prime}}}a_{\sigma,\alpha}(q).$$
Set
$$\widetilde\E_{\sigma}=\sum_{\alpha\ge\sigma\,(\stufe)}a_{\sigma,\alpha}(\stufe)\E_{\alpha}.$$
So
\begin{align*}
\widetilde\E_{\sigma}|T_n(q^2)
&=\sum_{\substack{\beta=\sigma\,(q)\\ \beta\ge \sigma\,(\stufe/q)}}a_{\sigma,\beta}(\stufe/q)\sum_{\substack{\alpha\ge \beta\,(q)\\ \alpha=\beta\,(\stufe/q)}}
a_{\beta,\alpha}(q)\E_{\alpha}|T_n(q^2)\\
&=\sum_{\substack{\beta=\sigma\,(q)\\ \beta\ge \sigma\,(\stufe/q)}}
\lambda_{\beta;n}(q^2) a_{\sigma,\beta}(\stufe/q)\sum_{\substack{\alpha\ge \beta\,(q)\\ \alpha=\beta\,(\stufe/q)}}
a_{\beta,\alpha}(q)\E_{\alpha}.
\end{align*}
We claim that for any $\beta$ so that $\beta=\sigma\ (q)$, $\beta\ge\sigma\ (\stufe/q)$,
and $a_{\sigma,\beta}(\stufe/q)\not=0$, we have
$\lambda_{\beta;n}(q^2)=\lambda_{\sigma;n}(q^2)$.  To see this, first note that
since $\beta=\sigma\ (q)$, we have
$$\lambda_{\beta;n}(q^2)
=q^{d(k-d-1)}\overline\chi_{\stufe/q}\left(qX^{-1}_{d,n-d}(q)M_{\beta},
\frac{1}{q}X^{-1}_{d,n-d}\right).$$
Using Proposition 4.1, we see that for a prime $q'|\stufe/q$ and $d'=\rank_{q'}M_{\beta}$, we have
$$\chi_{q'}\left(qX^{-1}_{d,n-d}(q)M_{\beta},
\frac{1}{q}X^{-1}_{d,n-d}\right)=\chi_{q'}^2(q^{d-d'})$$
and by Proposition 3.5, $\chi_{q'}^2=1$ when $q'|\stufe/\stufe_0\stufe_n$ (and necessarily
$\chi_4^2=1$).
As noted above, for $\beta>\sigma\ (q')$ and $\beta\ge\sigma\ (\stufe/q)$, we know that 
$a_{\sigma,\beta}(q')=0$ unless $\chi_{q'}^2=1$.  Thus when $a_{\sigma,\beta}(\stufe/q)\not=0$, we have $\chi_{q'}^2=1$ for all primes $q'|\stufe/q$.  Hence
$$\lambda_{\beta;n}(q^2)=q^{d(k-d-1)}=\lambda_{\sigma;n}(q^2).$$
Consequently $\widetilde\E_{\sigma}|T_n(q^2)=\lambda_{\sigma;n}(q^2)\widetilde\E_{\sigma}.$

Regardless of whether $\chi_{\stufe/q}^2=1$, we have
$|\lambda_{\sigma;n}(q^2)|=q^{d(k-d-1)}$ for every prime $q|\stufe$ and $d=\rank_qM_{\sigma}$.  Hence for $\rho\not=\sigma\ (\stufe)$, there is some prime $q|\stufe$ so that $\rank_qM_{\rho}\not=\rank_qM_{\sigma}$, and hence
$\lambda_{\rho;n}(q^2)\not=\lambda_{\sigma;n}(q^2)$.  This gives us the multiplicity-one result claimed in the statement of the corollary.

Finally, since the Hecke operators commute, we must have that $\widetilde\E_{\sigma}$ is an eigenform for $T_j(q^2)$ for all primes $q|\stufe$ and $1\le j<n$.
Thus using Theorem 4.3, we must have
$$\widetilde\E_{\sigma}|T_j(q^2)=\lambda_{\sigma;j}(q^2)\widetilde\E_{\sigma},$$
as claimed.
\end{proof}

Since the Hecke operators commute, we know that for every odd prime $p\nmid\stufe$ we must have that $\widetilde\E_{\sigma}$ is an eigenform for $T_j(p^2)$; below we compute the eigenvalues.  These are not so attractive, so in the corollary that follows we use an alternate set of generators for the local Hecke algebra, producing much more attractive eigenvalues.

Note that in Corollary 4.4 we have only diagonalized the space of Eisenstein series corresponding to $\Gamma_{\infty}\backslash\Gamma_0(4)/\Gamma_0(4\stufe)$, relative to the Hecke operators $T_j(q^2)$ for primes $q|\stufe$.  Following the proof of Theorem 4.3, we can see that the $\widetilde\E_{\sigma}$ will not all be eigenforms for $T_j(4)$ (unless all the Eisenstein series corresponding to cusps outside $\Gamma_{\infty}\backslash\Gamma_0(4)/\Gamma_0(4\stufe)$ are all 0, which is certainly not the case for Siegel degree $n=1$).

\begin{thm}
Let $\sigma=(\stufe_0,\ldots,\stufe_n)$ be a multiplicative partition of $\stufe$, and suppose that $\E_{\sigma}\not=0$.
Let $p$ be a prime not dividing $4\stufe$, and take $j$ so that $1\le j\le n$.
Then $\E_{\sigma}|T_j(p^2)=\lambda_{j}(p^2)\E_{\sigma}$ where
\begin{align*}
\lambda_{\sigma;j}(p^2)
&= 
\bbeta_p(n,j)\sum_{r+s\le j}p^{k(j-r+s)/2-(j-r)(n+1)}\chi(p^{j-r+s})\chi_{\stufe_n}(p^{2(r-s)})\\
&\quad\cdot \bbeta_p(j,r)\bbeta_p(j-r,s) \left(\frac{\G_1(p)}{\sqrt{p}}\right)^{j-r-s}\sym_p^{\psi}(j-r-s)
\end{align*}
where the sum is over all non-negative integers $r,s$ with $r+s\le j,$
and  $\psi(*)=\left(\frac{*}{p}\right)$.
Further, $\widetilde\E_{\sigma}|T_j(p^2)=\lambda_{\sigma;j}(p^2)\widetilde\E_{\sigma}.$
\end{thm}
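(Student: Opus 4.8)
The plan is to follow the computation carried out in the proof of Theorem 4.3, but now for a prime $p\nmid 4\stufe$, using Proposition 4.2(b) in place of Proposition 4.2(a) and mirroring the integral-weight evaluation of Corollary 5.3 of \cite{int wt}. The essential new feature is that two sources of Gauss sums must be reconciled: the normalized Gauss sum $\overline\G_M(N)/\sqrt{\det N}$ carried by the half-integral weight automorphy factor of $\E_\sigma$ (Proposition 3.2), and the factor $p^{-j/2+n_0}\G_{Y_1}(pI_{j-n_0-n_2})$ attached to the Hecke operator in Proposition 4.2(b), where $\G_{Y_1}(pI_{j-n_0-n_2})=\left(\frac{\det Y_1}{p}\right)\G_1(p)^{j-n_0-n_2}$. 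Because $p\nmid\stufe$, the relevant matrices in $T_j(p^2)$ are $\equiv I$ modulo $4\stufe$ up to the unit $p$, so the operator preserves $\rank_q M$ for every prime $q\mid\stufe$; hence $\E_\sigma|T_j(p^2)$ lies in the span of the single form $\E_\rho$ with $\rho=\sigma\ (\stufe)$, namely $\E_\sigma$ itself, and it remains only to compute the scalar $\lambda_{\sigma;j}(p^2)$.

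First I would apply Proposition 4.2(b) to $\E_\sigma$, written via Proposition 3.2 as a sum over coprime symmetric pairs $(M\ N)\in SL_n(\Z)(M_\sigma\ I)\Gamma_0(4\stufe)$. For each choice of $(n_0,n_2,G,Y)$ I would, exactly as in Theorem 4.3, left-multiply the resulting pair by a suitable element of $SL_n(\Z)$ to bring it to a standard coprime pair $(M'\ N')$ in the orbit of $(M_\sigma\ I)$. By Proposition 2.2 the quantities $\G_M(N)$, $\sqrt{\det N}$, $S_{M,N}(\tau)$ and $\chi(M,N)$ are invariant under such left multiplication, so, just as in Theorem 4.3, the ratio of $S$-factors contributes only a power of $p$, while the ratio $\overline\G_M(N)/\overline\G_{M'}(N'')$ of Gauss sums, together with the explicit Gauss sum $\G_{Y_1}(pI_{j-n_0-n_2})$, remains to be evaluated.

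The heart of the argument is this combined Gauss-sum contribution. Using Proposition 2.2(c) to rewrite $\G_M(N)$ under the block permutations and reductions appearing in Proposition 4.2(b), and then the identities of Propositions 5.1, 5.2 and 5.3 (as in Theorem 4.3), I expect the Gauss-sum ratio and the factor $\left(\frac{\det Y_1}{p}\right)\G_1(p)^{j-n_0-n_2}$ to collapse to a Legendre symbol $\psi(\det U)$ of an invertible symmetric block $U$ times a power of $\G_1(p)/\sqrt p$, where $\psi(*)=\left(\frac{*}{p}\right)$. Summing this twisted Gauss sum against $\chi$ as $U$ ranges over invertible symmetric matrices modulo $p$ then produces the factor $\left(\frac{\G_1(p)}{\sqrt p}\right)^{j-r-s}\sym_p^\psi(j-r-s)$, the half-integral-weight replacement for the factor $\sym_p(j-r-s)$ of Corollary 5.3 of \cite{int wt}; the combinatorial factors $\bbeta_p(\cdot,\cdot)$, the characters $\chi(p^{j-r+s})\chi_{\stufe_n}(p^{2(r-s)})$ and the powers of $p$ are unchanged from the integral-weight computation, yielding the stated formula for $\lambda_{\sigma;j}(p^2)$. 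I expect this Gauss-sum bookkeeping---pinning down which determinant carries the $\psi$-twist and confirming that the normalizations $p^{-j/2+n_0}$ and $1/\sqrt{\det N}$ combine correctly with $\G_1(p)/\sqrt p$---to be the main obstacle.

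Finally, for the assertion on $\widetilde\E_\sigma$, recall from Corollary 4.4 that $\widetilde\E_\sigma=\sum_{\beta\ge\sigma\,(\stufe)}a_{\sigma,\beta}(\stufe)\E_\beta$, so by the first part each term satisfies $\E_\beta|T_j(p^2)=\lambda_{\beta;j}(p^2)\E_\beta$. The eigenvalue $\lambda_{\beta;j}(p^2)$ depends on $\beta$ only through $\chi_{\stufe_n}$, and for $\beta$ with $a_{\sigma,\beta}(\stufe)\neq 0$ the extra primes $q$ dividing $\stufe_n(\beta)$ but not $\stufe_n(\sigma)$ satisfy $\chi_q^2=1$ (as established in Corollary 4.4), whence $\chi_q(p^{2(r-s)})=1$ and $\chi_{\stufe_n(\beta)}(p^{2(r-s)})=\chi_{\stufe_n(\sigma)}(p^{2(r-s)})$. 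Thus $\lambda_{\beta;j}(p^2)=\lambda_{\sigma;j}(p^2)$ for every $\beta$ occurring in $\widetilde\E_\sigma$, and by linearity $\widetilde\E_\sigma|T_j(p^2)=\lambda_{\sigma;j}(p^2)\widetilde\E_\sigma$, as claimed.
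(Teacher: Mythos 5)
Your proposal is correct and follows essentially the same route as the paper: apply Proposition 4.2(b) to the expansion of Proposition 3.2, adjust each pair by $SL_n(\Z)$ on the left, observe that $\rank_q$ is preserved for all $q\mid\stufe$ so that $\E_\sigma$ is an eigenform, reconcile the Gauss sums via Propositions 2.2(c) and 5.1--5.3, count the $(E,G,Y)$ as in the integral-weight case, and handle $\widetilde\E_\sigma$ by showing $\lambda_{\beta;j}(p^2)=\lambda_{\sigma;j}(p^2)$ whenever $a_{\sigma,\beta}(\stufe)\not=0$ using $\chi_q^2=1$ for the extra primes. The one point you flag as the main obstacle is resolved in the paper by choosing the representative $M_\sigma$ divisible by $p^3$: then $N$ is invertible modulo $p$, which forces $r=n_0$ and $s=n_2$ and makes $\G_{M_\sigma}(N'')=\G_M(N)$ exactly (so the Gauss-sum ratio contributes nothing), leaving only the explicit factor $p^{-j/2+n_0}\left(\frac{\det Y_1}{p}\right)\G_1(p)^{j-n_0-n_2}$ from the Hecke operator, whose sum over $Y$ produces $\sym_p^{\psi}(j-r-s)$.
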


\begin{proof}  To a large extent we follow the reasoning of Theorem 5.4 \cite{int wt}.

For any $n_0,n_2\in\Z_{\ge0}$ with $n_0+n_2\le j$, $G\in SL_n(\Z)$, $Y\in\Y_{n_0,n_2}$, and $SL_n(\Z)(M\ N)\in SL_n(\Z)(M_{\sigma}\ I)\Gamma_0(4\stufe)$, we adjust the representative $(M,N)$ and choose $r,s\in\Z_{\ge0}$ so that 
$$(M'\ N'')=X^{-1}_{r,s}(MX_{n_0,n_2}^{-1}G^{-1}\ \ NX_{n_0,n_2}\,^tG)$$
is an integral coprime pair (which is automatically symmetric).  
Note that $M'\equiv 0\ (4)$, and for all primes $q|\stufe$, we have $\rank_qM'=\rank_qM_{\sigma}$.
It follows from Proposition 3.5 that
$(M'\ N'')\in SL_n(\Z)(M_{\sigma}\ I)\Gamma_0(4\stufe)$.
Then as in the proof of Theorem 4.3, we have
\begin{align*}
&S_{M,N}(X_{n_0,n_2}^{-1}G^{-1}(\tau+GY\,^tG)\,^tG^{-1}X_{n_0,n_2}^{-1})\\
&\qquad=p^{(r-s-n_0+n_2)/2}S_{M',N''}(\tau+GY\,^tG)\\
&\qquad=p^{(r-s-n_0+n_2)/2}\frac{\sqrt{\det N''}}{\overline\G_{M'}(N'')}
\frac{\overline\G_{M'}(N')}{\sqrt{\det N'}}S_{M',N'}(\tau)
\end{align*}
where $N'=M'Y+N''$ (and $(M'\ N')\in SL_n(\Z)(M_{\sigma}\ I)\Gamma_0(4\stufe)$).
Setting $\varepsilon=\left(\frac{-1}{p}\right)$, we have
$$\E_{\sigma}|T_j(p^2)
=\sum_{(M'\ N')}c_{\sigma}(M',N')\overline\chi(M',N')
\left(\frac{\overline\G_{M'}(N')}{\sqrt{\det N'}} S_{M',N'}(\tau)\right)^{-k}$$
with
\begin{align*}
c_{\sigma}(M',N')
&=
p^{j(k/2-n-1)}\chi(M',N')\sum\chi(p^{j-n_0+n_2})\overline\chi(M,N)\\
&\cdot\left(\frac{\overline\G_{M}(N)}{\sqrt{\det N}}\frac{\sqrt{\det N''}}{\overline\G_{M'}(N'')}
\right)^{-k} p^{k(s-r)/2}\\
&\cdot
\varepsilon^{(k+1)(j-n_0-n_2)/2}\left(\frac{\det Y_1}{p}\right)
(p^{-1/2}\G_1(p))^{j-n_0-n_2},
\end{align*}
where the sum is over all $r,s,n_0,n_2\in\Z_{\ge0}$ with $n_0+n_2\le j$, $G,Y$ as in Proposition 4.1(b), and $E\in\K_{s,r}\backslash SL_n(\Z)$ so that
\begin{align*}
(M\ N)
&=X_{r,s}E(M'GX_{n_0,n_2}\ \ -M'GYX_{n_0,n_2}^{-1}+N'\,^tG^{-1}X_{n_0,n_2}^{-1})\\
&\in SL_n(\Z)(M_{\sigma'}\ I).
\end{align*}

Now fix a partition $\sigma'$ of $\stufe$.
As noted in \S5 of \cite{int wt}, we can choose the representative $M_{\sigma'}$ to be divisible by $p^3$; then with $(M\ N)$ as above,  we have that $p|M$ and $N$ is invertible modulo $p$.
We also have that $N''=X_{r,s}^{-1}NX_{n_0,n_2}\,^tG$ is invertible modulo $p$; consequently (as proved in Theorem 5.4 \cite{int wt}), we must have $r=n_0$ and $s=n_2$.  
From this we find that 
$$N=\begin{pmatrix}N_1&pN_2&p^2N_3\\N_4&N_5&pN_6\\N_7&N_8&N_9\end{pmatrix}$$
with $N_1$ $r\times r$, $N_9$ $s\times s$, and $N_1,N_5,N_9$ invertible modulo $p$.
Hence $X_r^{-1}NX_r$ is also integral and invertible modulo $p$.
Set $$P=\begin{pmatrix}0&0&I_s\\0&I&0\\I_r&0&0\end{pmatrix}\qquad (n\times n).$$
Then $X_r^{-1}N\,^tPX_{0,r}^{-1}=X_r^{-1}NX_rP$ is invertible modulo $p$, and
using Proposition 2.2(c) and Proposition 5.2, we have

\begin{align*}
\G_{M_{\sigma'}}(N'')
&=\G_{X_{r,s}^{-1}MX_{r,s}^{-1}}(X_{r,s}^{-1}NX_{r,s})\\
&=\G_{X_{0,s}^{-1}(X_r^{-1}M\,^tPX_{0,r})X_s}
(X_{0,s}^{-1}(X_r^{-1}N\,^tPX_{0,r}^{-1})X_s^{-1})\\
&=\G_{X_r^{-1}M\,^tPX_{0,r}}(X_r^{-1}N\,^tPX_{0,r}^{-1})\\
&=\G_{X_{0,r}PMX_r^{-1}}(X_{0,r}PNX_r)\\
&=\G_M(N).
\end{align*}
As shown in the proof of Theorem 4.5 \cite{int wt},
given $r,s$, for all choices of $Y$ and for
$$p^{rs}\bbeta_p(n,j)\bbeta_p(j,r)\bbeta_p(j-r,s)$$ 
choices of $(E,G)$ we have
\begin{align*}
(M\ N)&=
X_{r,s}E(M_{\sigma'}GX_{r,s}\ \ -M_{\sigma'}GYX_{r,s}^{-1}+\,^tG^{-1}X_{r,s}^{-1})\\
&\in SL_n(\Z)(M_{\sigma}\ I)\Gamma_0(4\stufe).
\end{align*}
Writing $Y$ as in Proposition 4.2, we have
$$\sum_{Y}\left(\frac{\det Y_1}{p}\right)
=p^{r(n-s+1)}\sym_p^{\psi}(j-r-s)$$
where $\psi_p(*)=\left(\frac{*}{p}\right).$
Note that $\sym_p^{\psi}(j-r-s)=0$ when $j-r-s$ is odd, and when $j-r-s$ is even we have
$\varepsilon^{(k+1)(j-r-s)/2}=1$.
Also, we can always choose $E,G\equiv I\ (p)$ and $Y\equiv0\ (p)$; hence
$\overline\chi(M,N)=\overline\chi(X_{r,s}M_{\sigma'}X_{r,s},I)$, and by Proposition 4.1,
if $\E_{\sigma'}\not=0$, we have $\overline\chi(M,N)=\chi_{\stufe_n}(p^{2(r-s)}).$
Combining these computations yields the value of $\lambda_{\sigma;j}(p^2).$

Since the Hecke operators commute, by our multiplicity one result (Corollary 4.4), $\widetilde\E_{\sigma}=\sum_{\alpha\ge\sigma\,(\stufe)}a_{\sigma,\alpha}\E_{\alpha}$
must be a $T_j(p^2)$ eigenform.  
Since each $\E_{\alpha}$ is a $T_j(p^2)$ eigenform, we must have
$\lambda_{\alpha;j}(p^2)=\lambda_{\sigma;j}(p^2)$ whenever $a_{\sigma,\alpha}\not=0$
(which can be corroborated by direct computation), so we have
$\widetilde\E_{\sigma}|T_j(p^2)=\lambda_{\sigma;j}(p^2)\widetilde\E_{\sigma}.$
\end{proof}

\begin{cor}
Let $p$ be a prime not dividing $4\stufe$, and set
$\varepsilon=\left(\frac{-1}{p}\right)$.  Set
$$\widetilde T_j(p^2)
=\sum_{\ell=0}^j \chi(p^{j-\ell})\varepsilon^{(k+1)(j-\ell)/2} p^{(j-\ell)(k/2-n-1/2)}
\bbeta_p(n-\ell,j-\ell) T_{\ell}(p^2)$$
and
$$T'_j(p^2)
=\sum_{i=0}^j(-1)^i p^{i(i-1)/2}\bbeta_p(n-j+i,i)\chi_{\stufe_n}(p^{2i})\widetilde T_{j-i}(p^2).$$
With $\sigma=(\stufe_0,\ldots,\stufe_n)$ a multiplicative partition of $\stufe$, we have
$$\E_{\sigma}|T'_j(p^2)=\lambda'_{j}(p^2) \E_{\sigma}
\text{ and } \widetilde\E_{\sigma}|T'_j(p^2)=\lambda'_{\sigma;j}(p^2) \widetilde\E_{\sigma} $$ 
where
$$\lambda'_{\sigma;j}(p^2)
= \bbeta_p(n,j) p^{j(k/2-n-1/2)+j(j-1)/2}\chi'(p^j)
\prod_{i=1}^j(\chi'\overline\chi_{\stufe_n}^2(p)p^{(k+1)/2-i} +1),$$
and $\chi'(p^s)=\chi(p^s)\varepsilon^{s(k+1)/2}$.
\end{cor}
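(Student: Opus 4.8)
The plan is to reduce the corollary to a purely formal eigenvalue computation, since the hard analytic work—evaluating the action of the standard generators $T_\ell(p^2)$—has already been done in Theorem 4.5. By that theorem, for every $\ell$ with $0\le\ell\le j$ both $\E_\sigma$ and $\widetilde\E_\sigma$ are eigenforms of $T_\ell(p^2)$ with the \emph{common} eigenvalue $\lambda_{\sigma;\ell}(p^2)$. Since $\widetilde T_j(p^2)$ and $T'_j(p^2)$ are, by definition, fixed $\C$-linear combinations of the operators $T_\ell(p^2)$ (the coefficients being scalars once $p$ and $\sigma$ are fixed), both $\E_\sigma$ and $\widetilde\E_\sigma$ are automatically eigenforms of these two operators, with a common eigenvalue obtained by substituting $\lambda_{\sigma;\ell}(p^2)$ into the defining combinations. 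Thus the entire content is the evaluation of the resulting scalar, and it suffices to verify the stated formula for $\lambda'_{\sigma;j}(p^2)$.

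Second, I would compute the $\widetilde T_j(p^2)$-eigenvalue by inserting the Theorem 4.5 formula for $\lambda_{\sigma;\ell}(p^2)$ into the definition of $\widetilde T_j(p^2)$. Here I would first use $\G_1(p)^2=\varepsilon p$ to replace $\left(\frac{\G_1(p)}{\sqrt p}\right)^{\ell-r-s}$ by $\varepsilon^{(\ell-r-s)/2}$ on the only surviving terms, namely those with $\ell-r-s$ even (the odd ones being killed by $\sym_p^\psi(\ell-r-s)=0$), and then insert the closed form for $\sym_p^\psi(\ell-r-s)$ furnished by Lemma 2.3 (the case $\chi_q^2=1$, $\chi_q\ne1$). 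The resulting triple sum over $\ell,r,s$ should then collapse after reindexing, using standard Gaussian-binomial identities to combine the factors $\bbeta_p(n-\ell,j-\ell)$, $\bbeta_p(n,\ell)$, $\bbeta_p(\ell,r)$, $\bbeta_p(\ell-r,s)$ together with the $\bmu,\bdelta$ quotients coming from $\sym_p^\psi$. I expect to land on a single clean sum in which the Gauss-sum and $\sym$ factors have disappeared, exactly mirroring the integral-weight computation of Corollary 5.5 \cite{int wt}, with the substitution $k'\mapsto k/2$ and the extra sign bookkeeping packaged into $\chi'(p^s)=\chi(p^s)\varepsilon^{s(k+1)/2}$.

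Third, I would pass from $\widetilde T_j(p^2)$ to $T'_j(p^2)$ by substituting the eigenvalues $\widetilde\lambda_{\sigma;j-i}(p^2)$ into the defining combination $T'_j(p^2)=\sum_{i=0}^j(-1)^i p^{i(i-1)/2}\bbeta_p(n-j+i,i)\chi_{\stufe_n}(p^{2i})\widetilde T_{j-i}(p^2)$. The alternating coefficients $(-1)^i p^{i(i-1)/2}\bbeta_p(n-j+i,i)$ are precisely those of a $q$-binomial inversion, and I would apply the $q$-binomial theorem to telescope the remaining sum into the advertised product $\prod_{i=1}^j\bigl(\chi'\overline\chi_{\stufe_n}^2(p)p^{(k+1)/2-i}+1\bigr)$, with the prefactor $\bbeta_p(n,j)\,p^{j(k/2-n-1/2)+j(j-1)/2}\chi'(p^j)$ falling out; this transition is the direct half-integral analogue of the one carried out in Corollary 5.5 \cite{int wt}.

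The main obstacle will be the combinatorial collapse in the second step: checking that the powers of $p$, the character values, and the $\bmu/\bdelta$ factors produced by the Lemma 2.3 evaluation of $\sym_p^\psi$ align exactly with the Gaussian-binomial coefficients $\bbeta_p$ so that the triple sum telescopes to a single term per index. Because this is the half-integral-weight counterpart of a computation already completed in \cite{int wt}, the genuinely new labour is tracking the Gauss-sum normalisation $\G_1(p)/\sqrt p$ and the signs $\varepsilon^{(k+1)(\cdot)/2}$ and confirming that they combine consistently into $\chi'$; once this is verified, the $q$-binomial inversion of the third step is routine.
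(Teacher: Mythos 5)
Your overall strategy is the paper's: both the reduction to a scalar computation via Theorem 4.5 (including the passage to $\widetilde\E_{\sigma}$, which you handle more directly than the paper -- the paper re-checks that $\lambda'_{\beta;j}(p^2)=\lambda'_{\sigma;j}(p^2)$ for every $\beta$ with $a_{\sigma,\beta}(\stufe)\not=0$ by verifying $\chi_{\stufe_n}^2=\chi_{\stufe_n'}^2$, whereas you simply invoke the eigenform statement of Theorem 4.5 for $\widetilde\E_{\sigma}$ itself), the Gaussian-binomial rearrangement $\bbeta_p(n-\ell,j-\ell)\bbeta_p(n,\ell)\bbeta_p(\ell,r)\bbeta_p(\ell-r,s)=\bbeta_p(n,j)\bbeta_p(j,r)\bbeta_p(j-r,s)\bbeta_p(j-r-s,\ell-r-s)$, and the final $q$-binomial inversion and $q$-binomial product evaluation are exactly what the paper does. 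The one substantive divergence is the mechanism for collapsing the sum over $\ell$. You propose to substitute the closed form of $\sym_p^{\psi}$ from Lemma 2.3 and grind through the resulting $\bmu,\bdelta$ quotients; you correctly flag this as the main obstacle but leave it as an expectation. The paper instead packages $p^{-\ell}\G_1(p)^{\ell}\sym_p^{\psi}(\ell)$ as $\widetilde\G(\big<0\big>^{\ell})$ and quotes Lemma 3.2(b) of \cite{half-int} with $W=\big<0\big>^{j-r-s}$, which gives $\sum_{\ell=0}^{m}\bbeta_p(m,\ell)\widetilde\G(\big<0\big>^{\ell})=p^{m(m-1)/2}$ in one stroke, because $W\perp\big<2\big>$ primitively represents $\big<0\big>^{m}$ exactly once. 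Your route does terminate: writing $\ell=2m'$ (odd $\ell$ die since $\sym_p^{\psi}$ vanishes there) and using $\G_1(p)^2=\varepsilon p$, Lemma 2.3 reduces the needed identity to $\sum_{m'}\bbeta_p(m,2m')\,p^{m'^2-m'}\prod_{i=1}^{m'}(p^{2i-1}-1)=p^{m(m-1)/2}$, which one can check by induction on $m$; but this verification is genuinely the hardest unwritten step in your plan, and the citation of \cite{half-int} Lemma 3.2(b) is the ingredient you are missing that makes it immediate. With that identity supplied (by either method), the rest of your outline goes through as written.
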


\begin{proof}
To take advantage of a result proved in \cite{half-int}, we set
$$\widetilde\G(\big<0\big>^{\ell})=p^{-\ell}(\G_1(p))^{\ell}\sym_p^{\psi}(\ell)$$
where $\psi(*)=\left(\frac{*}{p}\right)$. 
Then (with $t=\ell-r-s$)
we have $\E_{\sigma}|\widetilde T_j(p^2)=\widetilde\lambda_{j}(p^2) \E_{\sigma}$
where
\begin{align*}
\widetilde\lambda_{\sigma;j}(p^2)
&= \sum_{0\le\ell\le j}\chi(p^{j-\ell})\varepsilon^{(k+1)(j-\ell)/2}p^{(j-\ell)(k/2-n-1/2)}\\
&\quad\cdot\bbeta(n-\ell,j-\ell) \lambda_{\sigma;\ell}(p^2)\\
&= \sum \chi(p^{j-r+s})\chi_{\stufe_n}(p^{2(r-s)})\varepsilon^{(k+1)(j-r-s)}\\
&\quad\cdot p^{(j-r)(k/2-n-1/2)+s(k-1)/2}\\
&\quad\cdot
\bbeta(n-\ell,j-\ell) \bbeta(n,\ell)\bbeta(\ell,r)\bbeta(\ell-r,s)
\, \widetilde\G(\big<0\big>^{\ell-r-s})
\end{align*}
where $0\le \ell\le j$ and $0\le r+s\le \ell$, or equivalently, $0\le r+s\le j$ and $r+s\le \ell\le j$.
We have
\begin{align*}
&\bbeta(n-\ell,j-\ell)\bbeta(n,\ell)\beta(\ell,r)\bbeta(\ell-r,s)\frac{\bmu(j,\ell)}{\bmu(j,\ell)}\\
% &\qquad = \bbeta(n,j)\frac{\bmu(j,r)\bmu(j-r,s)\bmu(j-r-s,\ell-r-s)}
% {\bmu(\ell-r-s,\ell-r-s)\bmu(r,r)\bmu(s,s)}\\
&\qquad = \bbeta(n,j)\bbeta(j,r)\bbeta(j-r,s)\bbeta(j-r-s,\ell-r-s).
\end{align*}
Now we make the change of variables $\ell\mapsto \ell-r-s$.  So
\begin{align*}
\widetilde\lambda_{\sigma;j}(p^2)
&= \sum \chi(p^{j-r+s})\chi_{\stufe_n}(p^{2(r-s)})
\varepsilon^{(k+1)(j-r-s)/2}\\
&\quad\cdot p^{(j-r)(k/2-n-1/2)+s(k-1)/2}\\
&\quad\cdot \bbeta(n,j)\bbeta(j,r)\bbeta(j-r,s)\bbeta(j-r-s,\ell)
\widetilde\G(\big<0\big>^{\ell})
\end{align*}
where $0\le r+s\le j$, $0\le \ell\le j-r-s$. Taking
$W=\big<0\big>^{j-r-s}$ in Lemma 3.2(b) \cite{half-int},
we have that
$$\sum_{\ell=0}^{j-r-s}\bbeta(j-r-s,\ell)\widetilde\G(\big<0\big>^{\ell})
=p^{(j-r-s)(j-r-s-1)/2}$$
(as the form $W\perp\big<2\big>$ primitively represents $\big<0\big>^{j-r-s}$ only once).
Hence
\begin{align*}
\lambda'_{\sigma;j}(p^2)
% &=\sum(-1)^i p^{i(i-1)/2} \bbeta(n-j+i,i) 
% \chi_{\stufe_n}(p^{2i}) \widetilde\lambda_{j-i}(p^2)\\
&=\sum(-1)^ip^{i(i-1)/2}\chi_{\stufe_n}(p^{2(i+r-s)})\chi'(p^{j-i-r+s})\\
&\qquad\cdot
p^{(j-i-r)(k/2-n-1/2)+s(k-1)/2+(j-i-r-s)(j-i-r-s-1)/2}\\
&\qquad\cdot
\bbeta(n-j+i,i)\bbeta(n,j-i)\bbeta(j-i,r)\bbeta(j-i-r,s)
\end{align*}
where $0\le i\le j$ and $0\le r+s\le j-i$, or equivalently,
 $0\le i\le j$, $0\le r\le j-i$, $0\le s\le j-i-r$.
Making the change of variables $r\mapsto j-i-r$, we get
$\lambda'_{j}(p^2)$ as a sum over $0\le i\le j$, $0\le r\le j-i$, $0\le s\le r$,
or equivalently, $0\le r\le j$, $0\le i\le j-r$, $0\le s\le r$.  
We have $\bbeta(j-i,j-i-r)=\bbeta(j-i,r)$ and
$$\bbeta(n-j+i,i)\bbeta(n,j-i)\bbeta(j-i,r) \frac{\bmu(j,i)}{\bmu(j,i)}
=\bbeta(n,j)\bbeta(j,r)\bbeta(j-r,i).$$
Using that $\bbeta(m,r)=p^r\bbeta(m-1,r)+\bbeta(m-1,r-1)$, we find that
$$\sum_{i=0}^{j-r}(-1)^ip^{i(i-1)/2}\bbeta(j-r,i)
=\begin{cases}1&\text{if $r=j$,}\\ 0&\text{otherwise.}\end{cases}$$
Thus
$$\lambda'_{\sigma;j}(p^2)
=\bbeta(n,j)\chi'(p^j) p^{j(k/2-n-1/2)+j(j-1)/2} A(j,(k+1)/2-j)$$
where
$$A(j,y)=\sum_{s=0}^j \phi(p^s)  p^{ys+s(s-1)/2}\bbeta(j,s)$$
and $\phi(p^s)=\chi(p^s)\varepsilon^{(k+1)s/2}\overline\chi_{\stufe_n}(p^{2s})$.
Again using the relation $\bbeta(j,s)=p^s\bbeta(j-1,s)+\bbeta(j-1,s-1)$, we find that
$$A(j,y)=(\phi(p) p^y+1) A(j-1,y+1)
=\prod_{i=0}^{j-1}(\chi'(p)\overline\chi_{\stufe_n}(p^2)p^{y+i}+1).$$
Taking $y=(k+1)/2-j$ shows that $\E_{\sigma}|T'_j(p^2)=\lambda'_{\sigma;j}(p^2)\E_{\sigma}$.

Now recall that $\widetilde\E_{\sigma}=\sum_{\beta\ge\sigma\,(\stufe)}a_{\sigma,\beta}(\stufe)\E_{\beta}.$
Suppose that $\beta\ge\sigma\ (\stufe)$ so that $a_{\sigma,\beta}(\stufe)\not=0$.
Write $\sigma=(\stufe_0,\ldots,\stufe_n)$, $\beta=(\stufe'_0,\ldots,\stufe'_n)$. For any prime $q|\stufe_n$, we know that $\rank_qM_{\beta}\ge\rank_qM_{\sigma}=n$.
Consequently $\stufe_n|\stufe'_n$.  Now suppose that $q$ is a prime so that
$q|\stufe'_n$ but $q\nmid\stufe_n$.  Thus $n=\rank_qM_{\beta}>\rank_qM_{\sigma}$, as thus as discussed in the proof of Corollary 4.4, we have $\chi_q^2=1$.
Hence $\chi_{\stufe_n}^2=\chi_{\stufe'_n}^2$.
Therefore $\lambda'_{\beta;j}(p^2)=\lambda'_{\sigma;j}(p^2)$ for all $\beta\ge\sigma\ (\sigma).$  Hence $\widetilde\E_{\sigma}|T'_j(p^2)=\lambda'_{\sigma;j}(p^2)\widetilde\E_{\sigma}$.
\end{proof}

\bigskip

\section{Relations on Gauss sums}

To prove the following identities, we frequently use that with $n\times n$ matrices $A,B$, we have $Tr(AB)=Tr(BA).$

\begin{prop}
Suppose $(M\ N),\,(X_sMX_s^{-1}\ X_sNX_s)$ are coprime symmetric pairs.
Then
$$\G_{X_sMX_s^{-1}}(X_sNX_s)=q^s\cdot \G_M(N).$$
\end{prop}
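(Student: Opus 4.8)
The plan is to rewrite both Gauss sums as sums of a single quadratic exponential over two different lattices, and to produce the factor $q^s$ through an orthogonality argument. Throughout I write $S=N^{-1}M$ and take $X_s=\diag(qI_s,I_{n-s})$ (the scaling matrix arising in the Hecke computation, with $\det X_s=q^s$; the argument is insensitive to where the $q$-block sits). Since $(M\ N)$ is a coprime symmetric pair we have $M\,^tN=N\,^tM$, hence $S={}^tS$ and $S\,^tN=N^{-1}M\,^tN={}^tM\in\Z^{n,n}$. Recalling $\e\{2\,^tUUD^{-1}C\}=\exp(2\pi i\,UD^{-1}C\,^tU)$ and that $X_s$ is symmetric, the substitution $W=UX_s^{-1}$ (legitimate since $(X_sNX_s)^{-1}(X_sMX_s^{-1})=X_s^{-1}SX_s^{-1}$) turns the defining sum into
$$\G_{X_sMX_s^{-1}}(X_sNX_s)=\sum_{W\in L_3/L_0}g(W),\qquad g(W):=\e\{2\,^tWWS\},$$
where $L_2:=\Z^{1,n}\subseteq L_3:=\Z^{1,n}X_s^{-1}$ with index $q^s$, $L_1:=\Z^{1,n}N$, and $L_0:=\Z^{1,n}X_sN\subseteq L_1$ with $[L_1:L_0]=q^s$. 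The relations $S={}^tS$, $S\,^tN={}^tM$, and $N\,^tM$ symmetric integral show $g$ is well defined modulo $L_1$, so $\sum_{W\in L_2/L_0}g(W)=q^s\,\G_M(N)$. Thus it suffices to prove $\sum_{W\in L_3/L_0}g=\sum_{W\in L_2/L_0}g$.

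Next I would decompose $L_3=\bigsqcup_a(W_a+L_2)$ over representatives $W_a=(a/q,\,0)$, $a\in(\Z/q\Z)^s$, and complete the square: using $S={}^tS$,
$$g(W_a+V)=\e\{2\,^tW_aW_aS\}\,\chi_a(V)\,g(V),\qquad \chi_a(V):=\exp(4\pi i\,W_aS\,^tV).$$
Two structural facts, one from each hypothesis, drive the rest. Integrality of $X_sMX_s^{-1}$ forces its $(2,1)$-block $q^{-1}M_{21}$ to be integral, i.e.\ $q\mid M_{21}$; a short computation using $S\,^tN={}^tM$ then shows that the potentially obstructing $M_{21}$-entries in the exponent $4\pi i\,W_aS\,^t\ell$ ($\ell\in L_0$) are divisible by $q$, so $\chi_a$ is a well-defined character of $L_2/L_0$. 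Separately, reducing $(X_sMX_s^{-1}\ X_sNX_s)$ modulo $q$ shows that rows $1,\dots,s$ are supported on the first $s$ columns, where they equal $M_{11}$; coprimality (rank $n$ modulo $q$) therefore forces $M_{11}$ to be invertible modulo $q$.

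Then I would fold the inner sum $\sum_{V\in L_2/L_0}\chi_a(V)g(V)$ through the subgroup $L_1/L_0\cong(\Z/q\Z)^s$, on which $g$ is constant; since $\chi_a$ is a character this produces the factor $\sum_{t\in L_1/L_0}\chi_a(t)$. Writing $t=cN$ with $c=(c',0)$, $c'\in(\Z/q\Z)^s$, and using $S\,^tN={}^tM$, this factor becomes the additive character sum
$$\sum_{c'\in(\Z/q\Z)^s}\exp\!\Big(\tfrac{2\pi i}{q}\,2\,c'\,M_{11}\,{}^ta\Big),$$
whose frequency vector is $2\,M_{11}\,{}^ta$. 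Because $q$ is odd (so $2$ is a unit mod $q$) and $M_{11}$ is invertible mod $q$, this vanishes unless $a=0$, by orthogonality. Hence only the $a=0$ term survives in $\sum_{W\in L_3/L_0}g(W)$, and it equals $\sum_{V\in L_2/L_0}g(V)=q^s\,\G_M(N)$, which is exactly the claim.

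The main obstacle is the pair of bookkeeping checks in the middle step: one must match the divisibility $q\mid M_{21}$ (from integrality of the transformed matrix) to the well-definedness of $\chi_a$ on $L_2/L_0$, and the invertibility of $M_{11}$ modulo $q$ (from coprimality), together with $q$ odd, to the vanishing of the character sum. Tracking precisely which blocks of $M$ enter the relevant exponents with a denominator $q$, and verifying that each is rescued by the appropriate hypothesis, is where the real content lies; once these are in place the coset folding and orthogonality are routine.
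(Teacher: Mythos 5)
Your argument is correct and is in essence the paper's own proof: both rest on removing $X_s$ from the quadratic form by a change of variables, decomposing the summation lattice into cosets, completing the square, and invoking orthogonality of additive characters, with exactly the same two structural inputs --- $q\mid M_{21}$ (forced by integrality of $X_sMX_s^{-1}$) and invertibility of $M_{11}$ modulo $q$ (forced by coprimality of the transformed pair). The only difference is organizational: the paper writes $U=U_0+U_1NX_s$ and runs the character sum over $U_1\in\Z^{1,n}/\Z^{1,n}X_s$ to restrict $U_0$ to $\Z^{1,n}X_s$, whereas you run it over $\Z^{1,n}N/\Z^{1,n}X_sN$ to annihilate the nontrivial cosets of $\Z^{1,n}$ inside $\Z^{1,n}X_s^{-1}$ --- dual bookkeeping for the same computation, with the factor $q^s$ emerging from the corresponding index in each case.
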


\begin{proof}
We let $U_0$ vary over $\Z^{1,n}/\Z^{1,n}NX_s$, $U_1$ over $\Z^{1,n}/\Z^{1,n}X_s$.
Then $U_1NX_s$ varies over $\Z^{1,n}NX_s/\Z^{1,n}X_sNX_s$; hence
$U=U_0+U_1NX_s$ varies over $\Z^{1,n}/\Z^{1,n}X_sNX_s$ and, recalling that $M\,^tN=N\,^tM$, we have
\begin{align*}
\e\{2\,^tUUX_s^{-1}N^{-1}MX_s^{-1}\}
&=\e\{2\,^tU_0U_0X_s^{-1}N^{-1}MX_s^{-1}\}
\cdot \e\{4\,^tU_0U_1MX_s^{-1}\}.
\end{align*}
%
% Note: $\e\{2\,^tU_1U_0X_s^{-1}N^{-1}M\,^tN\}
% =\e\{2\,^tU_1U_0X_s^{-1}N^{-1}N\,^tM\}=\e\{2MX_s^{-1}\,^tU_0U_1\}$
%
For fixed $U_0$,
$$\sum_{U_1}\e\{4\,^tU_0U_1MX_s^{-1}\} = \sum_{U_1}\{4U_1MX_s^{-1}\,^tU_0\}$$
is a character sum, so the sum is 0 unless $MX_s^{-1}\,^tU_0$ is integral.
Since $X_sMX_s^{-1}$ is integral, $M=\begin{pmatrix} A_1&A_2\\qA_3&A_4\end{pmatrix}$ with $A_1$ of size $s\times s$.
Since 
$$(X_sMX_s^{-1},X_sNX_s)=1,$$ 
we must have that $A_1$ is invertible modulo $q$.
$MX_s^{-1}\,^tU_0$ is integral if and only if $U_0\in\Z^{1,n}X_s$,
from which the proposition follows.
\end{proof}

\begin{prop}  Suppose $(M\ N)$ and $(X_{0,r}MX_r^{-1}\ \,X_{0,r}NX_r)$ are integral, coprime symmetric pairs.  Then
$$\G_{X_{0,r}MX_r^{-1}}(X_{0,r}NX_r)=\G_M(N).$$
\end{prop}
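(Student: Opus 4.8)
The plan is to collapse the left-hand Gauss sum onto $\G_M(N)$ by one change of summation variable that absorbs the right factor $X_r^{-1}$, and then to match the two resulting lattice sums by an index count. First I would expand
$$\G_{X_{0,r}MX_r^{-1}}(X_{0,r}NX_r)=\sum_{U\in\Z^{1,n}/\Z^{1,n}X_{0,r}NX_r}\e\{2\,^tUU\,(X_{0,r}NX_r)^{-1}(X_{0,r}MX_r^{-1})\}.$$
Since $(X_{0,r}NX_r)^{-1}(X_{0,r}MX_r^{-1})=X_r^{-1}N^{-1}MX_r^{-1}$ and $X_r^{-1}$ is diagonal (hence symmetric), each summand equals $\exp\!\big(2\pi i\,(UX_r^{-1})N^{-1}M\,^t(UX_r^{-1})\big)$. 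Putting $V=UX_r^{-1}$ turns the summation set $\Z^{1,n}/\Z^{1,n}X_{0,r}NX_r$ into $\Z^{1,n}X_r^{-1}/\Z^{1,n}X_{0,r}N$, so that
$$\G_{X_{0,r}MX_r^{-1}}(X_{0,r}NX_r)=\sum_{V\in\Z^{1,n}X_r^{-1}/\Z^{1,n}X_{0,r}N}\e\{2\,^tVVN^{-1}M\}.$$

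I would then compare this with $\G_M(N)=\sum_{V\in\Z^{1,n}/\Z^{1,n}N}\e\{2\,^tVVN^{-1}M\}$, whose summand is identical. Because $X_{0,r}NX_r$ is integral we have $\Z^{1,n}X_{0,r}N\subseteq\Z^{1,n}X_r^{-1}$, while $\Z^{1,n}\subseteq\Z^{1,n}X_r^{-1}$ and $\Z^{1,n}N\subseteq\Z^{1,n}X_{0,r}N$, each of index $q^r$; a determinant computation gives $[\Z^{1,n}X_r^{-1}:\Z^{1,n}X_{0,r}N]=|\det N|=[\Z^{1,n}:\Z^{1,n}N]$. Thus it is enough to show that the inclusion $\Z^{1,n}\hookrightarrow\Z^{1,n}X_r^{-1}$ induces a bijection $\Z^{1,n}/\Z^{1,n}N\to\Z^{1,n}X_r^{-1}/\Z^{1,n}X_{0,r}N$; as both sides have order $|\det N|$, this reduces to injectivity, that is, to the identity $\Z^{1,n}\cap\Z^{1,n}X_{0,r}N=\Z^{1,n}N$. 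Granting it, every class on the right has an integral representative, on which the common summand agrees, and the two sums are equal.

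This intersection identity is the step I expect to be the main obstacle, and it is where coprimality enters. For $y\in\Z^{1,n}$ I would set $x=yN^{-1}$; membership $y\in\Z^{1,n}X_{0,r}N$ is equivalent to $xX_{0,r}^{-1}\in\Z^{1,n}$, which says the first $n-r$ entries of $x$ are integral and the last $r$ lie in $\tfrac1q\Z$, whereas $y\in\Z^{1,n}N$ says all entries of $x$ are integral. Using the integrality of $X_{0,r}MX_r^{-1}$ and $X_{0,r}NX_r$, the divisibility of the relevant blocks of $M$ and $N$ makes all the integrality constraints on $x$ automatic save one, namely $c\,N_1\equiv0\ (q)$, where $c\in\Z^{1,r}$ is $q$ times the vector of last $r$ entries of $x$ and $N_1$ is the $r\times r$ block of $N$ in its last $r$ rows and first $r$ columns. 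Hence the identity holds exactly when $N_1$ is invertible modulo $q$. To obtain this I would reduce $(M\ N)$ modulo $q$: integrality of $X_{0,r}MX_r^{-1}$ kills the last $r$ rows of $M$ mod $q$, and integrality of $X_{0,r}NX_r$ kills all entries of $N$ lying in the last $r$ rows and last $n-r$ columns, so the last $r$ rows of $(M\ N)$ reduce to a matrix supported on the block $N_1$, of rank $\rank_q N_1$. Since $(M\ N)$ is coprime, its $q$-rank is $n\le(n-r)+\rank_q N_1$, forcing $\rank_q N_1=r$, i.e. $\det N_1\not\equiv0\ (q)$, which completes the argument.
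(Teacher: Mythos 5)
Your argument is correct, and it reaches the identity by a genuinely different route from the paper's, although both ultimately rest on the same arithmetic input. The paper introduces the auxiliary sum $\sum_{U\in\Z^{1,n}/\Z^{1,n}NX_r}\e\{2\,^tUUX_r^{-1}N^{-1}MX_r^{-1}\}$ over the finer quotient of order $q^r|\det N|$ and evaluates it twice, via the two coset decompositions $U=U_0X_r+U_1X_{0,r}NX_r$ (with $U_0$ over $\Z^{1,n}/\Z^{1,n}N$) and $U=V_0+V_1X_{0,r}NX_r$ (with $V_0$ over $\Z^{1,n}/\Z^{1,n}X_{0,r}NX_r$); each evaluation yields $q^r$ times one of the two Gauss sums after one checks that the cross terms and the $U_1$- (resp.\ $V_1$-) contributions such as $\e\{4\,^tU_0U_1X_{0,r}M\}$ and $\e\{2\,^tNX_{0,r}\,^tU_1U_1X_{0,r}M\}$ are trivial. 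You instead transport the left-hand sum by the single substitution $V=UX_r^{-1}$ onto the quotient $\Z^{1,n}X_r^{-1}/\Z^{1,n}X_{0,r}N$, where the summand literally coincides with that of $\G_M(N)$, and then identify this quotient with $\Z^{1,n}/\Z^{1,n}N$ via the inclusion-induced map, using the determinant index count together with the intersection identity $\Z^{1,n}\cap\Z^{1,n}X_{0,r}N=\Z^{1,n}N$. The essential content is the same in both proofs: the block shape of $(M\ N)$ forced by integrality of the transformed pair, and the invertibility modulo $q$ of the block $B_3$ (your $N_1$) forced by coprimality of $(M\ N)$ --- your intersection identity plays exactly the role of the kernel computation for the homomorphism $\psi$ in the paper. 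What your version buys is economy: there is no auxiliary sum and no verification of vanishing cross terms. What the paper's version buys is reusability: the same $\psi$-decomposition is invoked again almost verbatim in the proof of Proposition 5.3, where an extra nontrivial factor (a genuine Gauss sum in the $V_1$-variable) does survive, a situation your direct identification of quotients would not capture without modification.
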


\begin{proof}  First, note that since $(X_{0,r}MX_r^{-1}\ \,X_{0,r}NX_r)$ is an integral coprime pair, we have
$M=\begin{pmatrix}qA_1&A_2\\q^2A_3&qA_4\end{pmatrix}$, $N=\begin{pmatrix} B_1&B_2\\B_3&qB_4\end{pmatrix}$
with $A_3, B_3$ of size $r\times r$ and $B_3$ invertible modulo $q$.

We obtain the desired identity by evaluating in two ways the sum
$$\sum_{U\in\Q^{1,n}/\Z^{1,n}NX_r}\e\{2\,^tUUX_r^{-1}N^{-1}MX_r^{-1}\}.$$

We now show that as $U_0$ varies over $\Z^{1,n}/\Z^{1,n}N$ and $U_1$ varies over $\Z^{1,n}/\Z^{1,n}X_{0,r}^{-1}$,
$U_0X_r+U_1X_{0,r}NX_r$ varies over $\Z^{1,n}/Z^{1,n}NX_r$.  
Define the additive homomorphism $\psi:\Z^{1,n}\times\Z^{1,n}\to \Z^{1,n}/\Z^{1,n}NX_r$ by
$$\psi((U_0,U_1))=U_0X_r+U_1X_{0,r}NX_r+\Z^{1,n}NX_r.$$
Suppose $(U_0,U_1)\in\ker\psi$.  Thus
$$U_0+U_1X_{0,r}N\in\Z^{1,n}N\subseteq\Z^{1,n}.$$
Writing $U_1=(W_1\ W_1')$ where $W_1'$ is $1\times r$, we must have $W_1'\equiv0\ (q)$ since $B_3$ is invertible modulo $q$ and $U_1X_{0,r}N$ is integral.  Hence $U_1X_{0,r}$ is integral, and thus $U_0\in\Z^{1,n}N$.  Thus
$$\ker\psi=\Z^{1,n}N\times\Z^{1,n}X_{0,r}^{-1}.$$
Since
$$|\psi(\Z^{1,n}\times\Z^{1,n})/\ker\psi|=q^r\det N
=|\Z^{1,n}/\Z^{1,n}NX_r|,$$
$\psi$ is an isomorphism.

Thus with $U_0$ varying over $\Z^{1,n}/\Z^{1,n}N$ and $U_1$ over $\Z^{1,n}/\Z^{1,n}X_{0,r}^{-1}$, we have 
\begin{align*}
&\sum_{U\in\Z^{1,n}/\Z^{1,n}NX_r} \e\{2\,^2UUX_r^{-1}N^{-1}MX_r^{-1}\}\\
&=\sum_{U_0,\,U_1} \e\{2\,^t(U_0X_r+U_1X_{0,r}NX_r)(U_0X_r+U_1X_{0,r}NX_r)X_r^{-1}N^{-1}MX_r^{-1}\}\\
&=\sum_{U_0,\,U_1}\e\{2\,^tU_0U_0N^{-1}M\}\,\e\{4\,^tU_0U_1X_{0,r}M\}\,\e\{2\,^tNX_{0,r}\,^tU_1U_1X_{0,r}M\}\\
&=q^r \G_M(N)
\end{align*}
since $X_{0,r}M$, $MX_{0,r}$, and $X_{0,r}M\,^tNX_{0,r}$ are integral.

On the other hand, as $V_0$ varies over $\Z^{1,n}/\Z^{1,n}X_{0,r}NX_r$ and $V_1$ varies over $\Z^{1,n}/\Z^{1,n}X_{0,r}^{-1}$
and hence $V_1X_{0,r}NX_r$ varies over $\Z^{1,n}X_{0,r}NX_r/\Z^{1,n}NX_r$.  So
$V_0+V_1X_{0,r}NX_r$ varies over $\Z^{1,n}/\Z^{1,n}NX_r$.
Thus
\begin{align*}
&\sum_{U\in\Z^{1,n}/\Z^{1,n}NX_r} \e\{2\,^tUUX_r^{-1}N^{-1}M_r^{-1}\}\\
&=\sum_{V_0,V_1}\e\{2\,^tV_0V_0X_r^{-1}N^{-1}MX_r^{-1}\}
\e\{4\,^tV_0V_1X_{0,r}MX_r^{-1}\}\\
& \e\{2\,^tV_1V_1\cdot X_{0,r}M\,^tNX_{0,r}\}\\
&=\sum_{V_0,V_1}\e\{2\,^tV_0V_0X_r^{-1}N^{-1}MX_r^{-1}\}
\end{align*}
since $X_{0,r}M\,^tNX_{0,r}$ is integral.  Thus
$$\sum_{U\in\Z^{1,n}/\Z^{1,n}NX_r} \e\{2\,^2UUX_r^{-1}N^{-1}M_r^{-1}\}
=q^r\G_{X_{0,r}MX_r^{-1}}(X_{0,r}NX_r).$$
This proves the proposition.
\end{proof}

\begin{prop}  Suppose that $(M\ N)$ is a coprime symmetric pair so that
$$M=\begin{pmatrix}qB_1&B_2\\qB_3&qB_4\end{pmatrix},\ 
N=\begin{pmatrix}C_1&C_2\\C_3&qC_4\end{pmatrix}$$
where $B_3, C_3$ are $\ell\times\ell$ and invertible modulo $q$.
Then $\rank_q(B_2\ C_2)=n-\ell$, $(MX_{\ell}^{-1}\ \,NX_{\ell})$ is a coprime
symmetric pair, and
$$\G_{MX_{\ell}^{-1}}(NX_{\ell})=
\left(\frac{\det B_3C_3}{q}\right) (\G_1(q))^{\ell} \G_M(N).$$
\end{prop}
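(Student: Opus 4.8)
The plan is to establish the two structural claims by reduction modulo $q$ and then to evaluate the Gauss sum by a change of variables that exhibits it as a product of one‑dimensional Gauss sums with $\G_M(N)$. For the rank statement, since $qB_1,qB_3,qB_4$ and $qC_4$ all vanish modulo $q$, the pair reduces to $\begin{pmatrix}0&B_2&C_1&C_2\\0&0&C_3&0\end{pmatrix}$, which must have rank $n$ by coprimality; the $\ell$ rows through the unit block $C_3$ account for rank $\ell$ and allow one to clear $C_1$ from the top block, leaving $(0\ B_2\ 0\ C_2)$, so the top $n-\ell$ rows contribute rank $n-\ell$ exactly when $\rank_q(B_2\ C_2)=n-\ell$. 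Symmetry of $(MX_\ell^{-1}\ NX_\ell)$ is immediate from $MX_\ell^{-1}\,{}^t(NX_\ell)=M\,{}^tN$, and its coprimality follows identically: away from $q$ the matrix $X_\ell$ is a unit, while modulo $q$ one reduces $\begin{pmatrix}B_1&B_2&0&C_2\\B_3&0&0&0\end{pmatrix}$ to full rank using that $B_3$ is a unit together with $\rank_q(B_2\ C_2)=n-\ell$.

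For the identity itself I would put both Gauss sums over a common quadratic form. With $R=N^{-1}M$ (symmetric, because $M\,{}^tN=N\,{}^tM$), the definition gives $\G_M(N)=\sum_{V\in\Z^{1,n}/\Z^{1,n}N}\e\{2\,{}^tVVR\}$, and the substitution $V=UX_\ell^{-1}$ in the definition of $\G_{MX_\ell^{-1}}(NX_\ell)$ rewrites it as $\sum_{V\in L^*/\Z^{1,n}N}\e\{2\,{}^tVVR\}$, where $L^*=q^{-1}\Z^{1,\ell}\oplus\Z^{1,n-\ell}\supseteq\Z^{1,n}$ with index $q^\ell$; thus the two sums differ only in that the second runs over the refinement $L^*$ of $\Z^{1,n}$ (equivalently one may parametrize directly by $U=U_0X_\ell+(a\ 0)$, in the style of Proposition 5.2). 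Splitting along $\Z^{1,n}N\subseteq\Z^{1,n}\subseteq L^*$, I write $V=V_0+q^{-1}(a\ 0)$ with $V_0\in\Z^{1,n}/\Z^{1,n}N$ and $a\in(\Z/q\Z)^\ell$. Using $NR=M$ one has $R_{\cdot,1:\ell}=qN^{-1}\begin{pmatrix}B_1\\B_3\end{pmatrix}$, so with $K=N^{-1}\begin{pmatrix}B_1\\B_3\end{pmatrix}$ and $K_1$ its top $\ell\times\ell$ block,
$$\e\{2\,{}^tVVR\}=\e\{2\,{}^tV_0V_0R\}\,\e\{4\,V_0Ka^t\}\,\e\{\tfrac2q\,aK_1a^t\}.$$

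The inner sum over $a$ is then an $\ell$‑dimensional Gauss sum whose quadratic part is governed modulo $q$ by $K_1$ and whose linear part couples $a$ to $V_0$. Because $B_3$ and $C_3$ are units modulo $q$, the symmetric form controlling the $a$‑sum is nondegenerate modulo $q$; diagonalizing it and applying the scalar identity $\sum_{x\,(q)}\e\{2q^{-1}cx^2\}=\left(\frac{c}{q}\right)\G_1(q)$ produces precisely $\left(\frac{\det B_3C_3}{q}\right)(\G_1(q))^\ell$ (the product over the diagonal entries, the determinants of $B_3$ and $C_3$ entering as squares up to the quadratic character), while the outer sum over $V_0$, together with the phase introduced upon completing the square, reassembles $\G_M(N)$.

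The hard part will be this coupling between the fine variable $a$ and the coarse variable $V_0$. Coprimality only forces $\rank_q(B_2\ C_2)=n-\ell$, so $C_2$, and hence $N$, may be singular modulo $q$; then $R=N^{-1}M$ carries $q$‑denominators in every block, $K_1$ itself need not be invertible modulo $q$, and the $a$‑sum does not split off from $\G_M(N)$ term by term — only the final product is clean. I would disentangle this by completing the square in the combined variables, equivalently by choosing a $\Z$‑basis of $L^*$ adapted to $R$ modulo $\Z^{1,n}N$, after first using the invariances $\G_{EM}(EN)=\G_M(N)$ and $\G_{MF}(N\,{}^tF^{-1})=\G_M(N)$ of Proposition 2.2(c) — valid for arbitrary $E\in GL_n(\Z)$ on the left and for $F$ block‑diagonal with respect to the $(\ell,n-\ell)$ column splitting, which commute with $X_\ell$ and hence preserve both members of the asserted identity — to normalize $B_3$ and $C_3$ to the identity modulo $q$ and to make the off‑diagonal coupling as simple as possible. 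The scalar case $\ell=1$ and the nonsingular case $\ell=n$ (where $N\equiv C_3$ is automatically a unit, the square completes at once, and the factor is visibly $\left(\frac{\det B_3C_3}{q}\right)(\G_1(q))^\ell$) serve as guides; the general case should follow by the same change of variables once the coupling is controlled, with the reciprocity $\overline{\G_1(q)}=\left(\frac{-1}{q}\right)\G_1(q)$ used to reconcile signs.
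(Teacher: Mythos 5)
There is a genuine gap. Your structural claims (the rank computation, the symmetry and coprimality of $(MX_{\ell}^{-1}\ NX_{\ell})$) are fine, and your overall strategy --- rewrite $\G_{MX_{\ell}^{-1}}(NX_{\ell})$ as $\sum_{V\in L^*/\Z^{1,n}N}\e\{2\,^tVVR\}$ over the finer lattice $L^*=\Z^{1,n}X_{\ell}^{-1}$ and split off the index-$q^{\ell}$ refinement --- is the right one. But the splitting $V=V_0+q^{-1}(a\ 0)$ is the wrong choice of complement, and the difficulty it creates is not a technicality you can defer: the cross term $\e\{4V_0K\,^ta\}$ with $K=N^{-1}\left(\begin{smallmatrix}B_1\\B_3\end{smallmatrix}\right)$ is genuinely nontrivial whenever $N$ is singular modulo $q$, and the ``quadratic part'' $\e\{2q^{-1}aK_1\,^ta\}$ is not even a Gauss sum modulo $q$, since $K_1$ need not be integral. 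Already for $n=\ell=1$, $M=qb$, $N=c$ with $q\nmid bc$, your three factors are $\e\{2V_0^2qb/c\}$, $\e\{4V_0ab/c\}$ and $\e\{2a^2b/(qc)\}$: the last has denominator $qc$, not $q$, and neither of the last two factors is separately well defined as a function of $a$ modulo $q$. So the ``coupling'' you flag as the hard part is exactly the content of the proposition, and completing the square in the combined variables amounts to re-choosing the complement --- which your proposal never actually does. (A further obstruction to your normalization step: the invariances $M\mapsto EM$, $N\mapsto EN$ and $M\mapsto MF$, $N\mapsto N\,^tF^{-1}$ preserve $\left(\frac{\det B_3C_3}{q}\right)$, so you cannot reduce both $B_3$ and $C_3$ to $I$ modulo $q$ unless that character value is $1$.)

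The paper's resolution is precisely a different choice of complement: it parametrizes $L^*/\Z^{1,n}N$ by $V_0+U_1X_{0,\ell}N$ (equivalently $U_0X_{\ell}+U_1X_{0,\ell}NX_{\ell}$ in the $U$-variable), i.e.\ the fine translates are $q^{-1}$ times the last $\ell$ rows of $N$ rather than $q^{-1}$ times standard basis vectors. With that choice the cross term becomes $\e\{4\,U_1X_{0,\ell}NR\,^tV_0\}=\e\{4\,U_1X_{0,\ell}M\,^tV_0\}=1$ because $X_{0,\ell}M$ is integral, so the sum factors \emph{term by term} as $\G_M(N)\cdot\G_{M\,^tNX_{0,\ell}}(X_{0,\ell}^{-1})$, and the fine factor is literally
$$\sum_{V\in\Z^{1,\ell}/q\Z^{1,\ell}}\e\{2\,^tVVB_3\,^tC_3/q\},$$
an honest $\ell$-dimensional Gauss sum modulo $q$ whose matrix $B_3\,^tC_3$ is symmetric modulo $q$ (by the symmetry of $M\,^tN$) and invertible; diagonalizing it gives $\left(\frac{\det B_3C_3}{q}\right)(\G_1(q))^{\ell}$. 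To repair your argument you would need to replace $q^{-1}(a\ 0)$ by $q^{-1}aN_{\ell}$ (with $N_{\ell}$ the last $\ell$ rows of $N$); as written, the proposal does not constitute a proof.
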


\begin{proof}
Since $C_3$ is invertible modulo $q$, we have
$$n=\rank_q\begin{pmatrix}B_2&0&C_2\\0&C_3&0\end{pmatrix}
=\rank_q\begin{pmatrix}B_2&0&C_2\\0&C_3&0\end{pmatrix},$$
hence $\rank_q(B_2\ C_2)=n-\ell$.
Also, $\rank_q(MX_{\ell}^{-1}\ \,NX_{\ell})=\rank_q
\begin{pmatrix}0&B_2&C_2\\B_3&0&0\end{pmatrix}=n,$
so $(MX_{\ell}^{-1}\ \,NX_{\ell})$ is a coprime symmetric pair.
We know that $X_{0,\ell}NX_{\ell}$ is integral, so we define the additive homomorphism 
$\psi:\Z^{1,n}\times\Z^{1,n}\to \Z^{1,n}/\Z^{1,n}NX_{\ell}$ by
$$\psi((U_0,U_1))=U_0X_{\ell}+U_1X_{0,\ell}NX_{\ell}+\Z^{1,n}NX_{\ell}.$$
Then just as proved in Proposition 5.2, $\psi$ is surjective with kernel
$\Z^{1,n}N\times\Z^{1,n}X_{0,\ell}NX_{\ell}.$
Thus, as $X_{0,\ell}M$ is integral, we find that
\begin{align*}
&\G_{MX_{\ell}^{-1}}(NX_{\ell})
% \\&\qquad =
%\sum_{U_0\in\Z^{1,n}/\Z^{1,n}N}\e\{2\,^tU_0U_0N^{-1}M\}
%\sum_{U_1\in\Z^{1,n}/\Z^{1,n}X_{0,\ell}^{-1}}\e\{2\,^tU_1U_1X_{0,\ell}M\,^tNX_{0,%\ell}\}\\
%&\qquad
=\G_M(N)\G_{M\,^tNX_{0,\ell}}(X_{0,\ell}^{-1}).
\end{align*}

To evaluate $\G_{M\,^tNX_{0,\ell}}(X_{0,\ell}^{-1})$, we first note that
$\{(0\ V):\ V\in\Z^{1,\ell}/q\Z^{1,\ell}\ \}$
is a set of representatives for $\Z^{1,n}/\Z^{1,n}X_{0,\ell}^{-1}$.
Thus
$$\G_{M\,^tNX_{0,\ell}}(X_{0,\ell}^{-1})
=\sum_{V\in\Z^{1,\ell}/q\Z^{1,\ell}}\e\{2\,^tVVB_3\,^tC_3/q\}.$$
Since $q\not=2$ and $B_3\,^tC_3$ is symmetric and invertible modulo $q$, by section 2.8 \cite{Ger}, there is some $G\in SL_{\ell}(\Z)$ and $w_1,\ldots,w_{\ell}\in\Z$
 so that 
$$GB_3\,^tC_3\,^tG\simeq\big<w_1,w_2,\ldots,w_{\ell}\big>$$
 where $q\nmid w_1w_2\cdots w_{\ell}$.
Since $\Z^{1,\ell}G/q\Z^{1,\ell}G=\Z^{1,\ell}/q\Z^{1,\ell}$, replacing $V$ by $VG$ gives us
\begin{align*}
\G_{M\,^tNX_{0,\ell}}(X_{0,\ell}^{-1})
&= \sum_{v_1,\ldots,v_{\ell}\,(q)}
\e\{2(v_1^2w_1+v_2^2w_2+\cdots+v_{\ell}^2w_{\ell})/q\}\\
&= \left(\frac{w_1w_2\cdots w_{\ell}}{q}\right) \left(\G_1(q)\right)^{\ell}\\
&= \left(\frac{\det B_3C_3}{q}\right)\left(\G_1(q)\right)^{\ell},
\end{align*}
proving the proposition.
\end{proof}

\end{document}